\newtheorem{prop}{Proposition}[section]
\newtheorem{teo}{Theorem}[section]
\newtheorem{lema}{Lemma}[section]
\newtheorem{coro}{Corollary}[section]
\theoremstyle{definition}
\def\ep{\varepsilon}
\def\a{\alpha}
\def\R{\mathbb R}
\renewcommand{\theequation}{\arabic{section}.\arabic{equation}}
\begin{document}
\title[Asymptotics for time-fractional diffusion]{Asymptotic profiles for inhomogeneous heat equations with memory}

\author[C. Cort\'{a}zar,  F. Quir\'{o}s \and N. Wolanski]{Carmen Cort\'{a}zar,  Fernando  Quir\'{o}s, \and Noem\'{\i} Wolanski}

\address{Carmen Cort\'{a}zar\hfill\break\indent
Departamento  de Matem\'{a}tica, Pontificia Universidad Cat\'{o}lica
de Chile \hfill\break\indent Santiago, Chile.} \email{{\tt
ccortaza@mat.puc.cl} }

\address{Fernando Quir\'{o}s\hfill\break\indent
Departamento  de Matem\'{a}ticas, Universidad Aut\'{o}noma de Madrid,
\hfill\break\indent 28049-Madrid, Spain,
\hfill\break\indent and Instituto de Ciencias Matem\'aticas ICMAT (CSIC-UAM-UCM-UC3M),
\hfill\break\indent 28049-Madrid, Spain.} \email{{\tt
fernando.quiros@uam.es} }

\address{Noem\'{\i} Wolanski \hfill\break\indent
IMAS-UBA-CONICET, \hfill\break\indent Ciudad Universitaria, Pab. I,\hfill\break\indent
(1428) Buenos Aires, Argentina.} \email{{\tt wolanski@dm.uba.ar} }

\keywords{Heat equations with memory, Caputo derivative, fractional Laplacian, fully nonlocal heat equations,  large-time behavior.}

\subjclass[2020]{%
35B40, 
35R11, 
45K05. 
}

\date{}

\begin{abstract}
We study the large-time behavior in all $L^p$ norms of solutions to an inhomogeneous nonlocal heat equation in $\mathbb{R}^N$ involving a Caputo  $\alpha$-time derivative and a power $\beta$ of the Laplacian when the dimension is large, $N> 4\beta$. The asymptotic profiles depend strongly on the space-time scale and on the time behavior of the spatial $L^1$ norm of the forcing term.
\end{abstract}

\maketitle

\section{Introduction and main results}
\setcounter{equation}{0}

\subsection{Aim}
The purpose of this paper is to give a precise description of  the large-time behavior of solutions to the inhomogeneous \emph{fully nonlocal} heat equation
\begin{equation}\label{eq-f}
 	\partial_t^\a u+(-\Delta)^\beta u=f\quad\mbox{in }Q:=\R^N\times(0,\infty),\qquad
 	u(\cdot,0)=0\quad\mbox{in }\R^N,
\end{equation}
in the case of large dimensions, $N>4\beta$, completing the analysis started by Kemppainen, Siljander and Zacher in~\cite{Kemppainen-Siljander-Zacher-2017}.  Here, $\partial_t^\alpha$, $\alpha\in(0,1)$, denotes the so-called Caputo $\alpha$-derivative, introduced independently by many authors using different points of view, see for instance~\cite{Caputo-1967,Dzherbashyan-Nersesian-1968,Gerasimov-1948,Gross-1947,Liouville-1832,Rabotnov-1966}, defined for smooth functions by
$$
\displaystyle\partial_t^\alpha u(x,t)=\frac1{\Gamma(1-\alpha)}\,\partial_t\int_0^t\frac{u(x,\tau)- u(x,0)}{(t-\tau)^{\alpha}}\, {\rm d}\tau,
$$
and $(-\Delta)^\beta$, with $\beta\in(0,1]$,  is the usual $\beta$ power of the Laplacian, defined for smooth functions by
$(-\Delta)^{\beta}=\mathcal{F}^{-1}(|\cdot|^{2\beta}\mathcal{F})$, where $\mathcal{F}$ stands for Fourier transform; see for instance~\cite{Stein-book-1970}. Such equations, nonlocal both in space and time, are useful to model situations with long-range interactions and memory effects, and have been proposed for example to describe plasma  transport~\cite{delCastilloNegrete-Carreras-Lynch-2004,delCastilloNegrete-Carreras-Lynch-2005}; see also~\cite{Cartea-delCastilloNegrete-2007,Compte-Caceres-1998,Metzler-Klafter-2000,Zaslavsky-2002} for further models that use such equations.

Problem~\eqref{eq-f} does not have in general a classical solution, unless the forcing term $f$ is smooth enough. However, if $f\in L^\infty_{\rm loc}([0,\infty):L^1(\mathbb{R}^N))$,  it has a solution in a generalized sense,
defined by Duhamel's type formula
\begin{equation}\label{eq-formula}
 		u(x,t)=\int_0^t\int_{\R^N}Y(x-y,t-s)f(y,s)\,{\rm d}y{\rm d}s,
\end{equation}
with $Y=\partial_t^{1-\a} Z$, where $Z$ is the fundamental solution for the Cauchy problem,
\begin{equation}
\label{eq:Cauchy}
\partial_t^\a u+(-\Delta)^\beta u=0\quad\text{in }Q,\qquad u(\cdot,0)=u_0\quad\text{in }\mathbb{R}^N;
\end{equation}
see \cite{Eidelman-Kochubei-2004,Kemppainen-Siljander-Zacher-2017,Kochubei-1990}. Throughout the paper, by the solution to problem~\eqref{eq-f} we always mean the generalized solution given by~\eqref{eq-formula}.

The rate of decay/growth of the solution  depends  on the space-time scale under consideration, the $L^p$ norm with which we measure the size of $u$, and the size of the right-hand side $f$; see~\cite{Cortazar-Quiros-Wolanski-2022}, and also~\cite{Cortazar-Quiros-Wolanski-2021-Preprint} for the case of small dimensions. Our goal here is to determine, under some assumptions on the forcing term $f$, the asymptotic profile of the solution, once it is normalized taking into account the decay/growth rate. Let us point out that even for the local case, $\alpha=1$, $\beta=1$, such study is not yet complete; see Subsection~\ref{subsect-precedents} below.

\noindent\emph{Notation. } Let $g,h:\mathbb{R}_+\to\mathbb{R}_+$. In what follows we write $g\simeq h$ if there are constants $\nu,\mu>0$ such that $\nu \le g(t)/h(t)\le \mu$ for all $t\in\mathbb{R}_+$, and  $g\succ h$ if $\displaystyle\lim_{t\to\infty}\frac{g(t)}{h(t)}=\infty$.

\medskip

\subsection{The kernel $Y$. Critical exponents}\label{subsect-estimates W}

Our proofs depend on a good knowledge of the kernel $Y$, which, as mentioned above, is given by $Y=\partial_t^{1-\alpha}Z$. Let $\widehat{Z}=\widehat{Z}(\omega,t)$ denote the Fourier transform of the fundamental solution $Z$ of problem~\eqref{eq:Cauchy} in the $x$ variable. Then,
$$
\partial_t^\alpha \widehat{Z}(\omega,t)=-|\omega|^{2\beta}\widehat{Z}(\omega,t),\qquad \widehat{Z}(\omega,0)=1.
$$
The solution to this ordinary fractional differential equation is
$$
\widehat{Z}(\omega,t)=\mathbb{E}_\alpha(-|\omega|^{2\beta} t^\alpha),
$$
where $\mathbb{E}_\alpha$ is the Mittag-Leffler function of order $\alpha$,
$$
\mathbb{E}_\alpha(s)=\sum_{k=0}^\infty\frac{s^k}{\Gamma(1+k\alpha)}.
$$
Inverting the Fourier transform, we obtain that $Z$ has a self-similar form,
\begin{equation}
\label{eq:Z.selfsimilar}
Z(x,t)=t^{-N\theta}F(\xi),\quad \xi=x t^{-\theta},\quad \theta:=\frac{\alpha}{2\beta}
\end{equation}
with a radially symmetric positive profile $F$ that has an explicit expression in terms of certain  Fox's $H$-functions.
Hence, $Y$ has also a self-similar form,
\begin{equation}
\label{eq:Y.selfsimilar}
Y(x,t)=t^{-\sigma_*}G(\xi),\quad \xi=x t^{-\theta},\qquad \sigma_*:=1-\alpha+N\theta.
\end{equation}
Its profile $G$ is positive, radially symmetric, and smooth outside the origin, has integral 1, and,
in the case of large dimensions that we are considering here, $N>4\beta$, satisfies, for all $\beta\in(0,1]$, the estimates
\begin{align}
\label{eq:interior.estimate.profile}
 &G(\xi)\simeq 	{|\xi|^{4\beta-N}},&|\xi|\le 1,\\
%
\label{eq:exterior.estimates.profile}
&G(\xi)=O\big(|\xi|^{-(N+2\beta)}\big),\quad |DG(\xi)|=O\big(|\xi|^{-(N+2\beta+1)}\big),&|\xi|\ge1.
\end{align}
We have also the limit
\begin{equation}
\label{eq:constant.origin}
|\xi|^{N-4\beta}G(\xi)\to\kappa\quad\text{as }|\xi|\to0\text{ for some constant }\kappa>0,
\end{equation}
which shows that the inner estimate~\eqref{eq:interior.estimate.profile} is sharp. The exterior estimates~\eqref{eq:exterior.estimates.profile} are also sharp if $\beta\in(0,1)$. In the special case $\beta=1$, both $G$ and $|DG|$ decay exponentially, but we do not need this fact in our calculations. All these estimates, and many others, are proved  in~\cite{Kemppainen-Siljander-Zacher-2017,Kim-Lim-2016}.

As a consequence of~\eqref{eq:interior.estimate.profile}--\eqref{eq:exterior.estimates.profile} we have the global bound
\begin{equation}
\label{eq:global.estimate.Y}
0\le Y(x,t)\le Ct^{-(1+\a)}|x|^{4\beta-N}\quad\text{in }Q,
\end{equation}
and  also the exterior bounds, valid if $|x|\ge \nu t^{\theta}$, $t>0$, for some $\nu>0$,
\begin{align}
  \label{eq:exterior.estimate.Y}
&0\le Y(x,t)\le C_\nu t^{2\a-1}|x|^{-(N+2\beta)},\\
\label{eq:bound.DY}
&|D Y(x,t)|\le C_\nu t^{2\a-1}|x|^{-(N+2\beta+1)},\\
\label{eq:bound.Yt}
&|\partial_t Y(x,t)|\le C_\nu t^{2\a-2}|x|^{-(N+2\beta)}.
\end{align}

Notice that $Y(\cdot,t)\in L^p(\mathbb{R}^N)$ if and only if $p\in [1,p_*)$, where $p_*:=N/(N-4\beta)$. Moreover,
\begin{equation}\label{eq:p.norm.Y}
\|Y(\cdot,t)\|_{L^p(\mathbb{R}^N)}=C t^{-\sigma(p)}\quad\text{for all }t>0\quad \text{if }p\in [1,p_*), \quad\text{where } \sigma(p):=\sigma_*-\frac{N\theta}p.
\end{equation}
Observe also that  $\sigma(p)<1$, and hence $Y\in L_{\rm loc}^1([0,\infty):L^p(\R^N))$, if and only if $p\in [1, p_{\rm c})$, with $p_{\rm c}:= N/(N-2\beta)$. Since the solution is given by a convolution of $f$ with $Y$ \emph{both in space and time}, the threshold value that will mark the border between subcritical and supercritical behaviors will be $p_{\rm c}$, and not $p_*$.

The self-similar form of $Y$, see~\eqref{eq:Y.selfsimilar}, stemming from the scaling invariance of the integro-differential operator,  gives a hint of the special role played by \emph{diffusive} scales, $|x|\simeq t^\theta$. As we will see, there is a marked difference between the behavior in compact sets and that in \emph{outer} scales, $|x|\ge \nu t^{\theta}$ for some $\nu>0$, with intermediate behaviors in \emph{intermediate} scales, $|x|\simeq \varphi(t)$, with $\varphi(t)\succ 1$, $\varphi(t)=o(t^\theta)$.

\subsection{Assumptions on $f$}
We always assume, no matter the space-time scale under consideration, the size hypothesis
 \begin{equation}\label{eq-hypothesis f}
\|f(\cdot,t)\|_{L^1(\mathbb{R}^N)}\le C(1+t)^{-\gamma} \quad\text{for some }\gamma\in\mathbb{R}\text{ and }C>0.
 \end{equation}
This condition guarantees that the function $u$ given by Duhamel's type formula~\eqref{eq-formula} is well defined, and moreover, that $u(\cdot,t)\in L^p (\mathbb{R}^N)$  for all $t>0$ in the \emph{subcritical} range $p\in[1,p_{\rm c})$, though not for $p\ge p_{\rm c}$.
In case we wish to analyze the large-time behavior of $u$ when $p$ is not subcritical, we will need some extra assumption on the spatial behavior of $f$ to force $u$ and the function giving its asymptotic behavior to be in the right space. The assumptions will depend on the scales, $p$, and $\gamma$.

\noindent\emph{Notation. } Given $p\ge p_{\rm c}$, we define
$$
q_{\rm c}(p):=\begin{cases}\displaystyle
\frac{Np}{2\beta p + N},&p\in[1,\infty),\\[8pt]
\displaystyle\frac{N}{2\beta},&p=\infty.
\end{cases}
$$

\noindent\textsc{Compact sets. } When dealing with the behavior in compact sets, if $p\ge p_{\rm c}$ we will assume that
\begin{equation}
\label{eq:assumption.f.p.not.subcritical}
\text{there is }q\in(q_{\rm c}(p),p]\text{ such that }\|f(\cdot,t)\|_{L^q(K)}\le C_K(1+t)^{-\gamma}\text{ for each }K\subset\subset \mathbb{R}^N.
\end{equation}
On the other hand, if the time decay of the $L^1$ norm is not fast enough, namely, if $\gamma\le 1+\alpha$, in order to obtain a limit profile we will need to assume that $f$ is asymptotically a function in separate variables in the following precise sense, 				
\begin{equation}
\label{eq:hypothesis.asymp.sep.variables.1}
\text{there exists }g\in L^1(\R^N)\text{ such that }\|f(\cdot,t)(1+t)^\gamma-g\|_{L^1(\R^N)}\to0\quad\text{as }t\to\infty,
\end{equation}
again with an extra assumption if $p\ge p_{\rm c}$,
\begin{equation}
\label{eq:hypothesis.asymp.sep.variables.2}
g\in L_{\rm loc}^q(\mathbb{R}^N)\text{ for some }q\in(q_{\rm c}(p),p], \quad  \|f(\cdot,t)(1+t)^\gamma-g\|_{L^q(K)}\to 0\text{ as }t\to\infty\text{ if }K\subset\subset\R^N.
\end{equation}

\noindent\textsc{Intermediate scales. } For intermediate space-time scales, unless they are \emph{fast} (see Subsection~\ref{subsect-statement of results} for a precise definition) and $\gamma>1$, we have to assume that $f$ is asymptotically a function in separate variables, hypothesis~\eqref{eq:hypothesis.asymp.sep.variables.1}. If $\gamma=1$ and the scale is not \emph{slow} (see Subsection~\ref{subsect-statement of results} for a definition) we  will require
the tail control condition
\begin{equation}
\label{eq:uniform.tail.control.subcritical}
\sup_{t>0}\big( (1+t)^\gamma\|f(\cdot,t)\|_{L^1(\{|x|>R\})}\big)=o(1)\quad\text{as }R\to\infty.
\end{equation}

\noindent\emph{Remark. } Condition~\eqref{eq:uniform.tail.control.subcritical} is satisfied, for instance, if $|f(x,t)|\le h(x)(1+t)^{-\gamma}$,   for some $h\in L^1(\mathbb{R}^N)$.

Finally, if $p\ge p_{\rm c}$ we assume moreover the uniform tail control condition
\begin{equation}
\label{eq:tail.control.intermediate}
\sup_{t>0}\big( (1+t)^\gamma\|f(\cdot,t)\|_{L^q(\{|x|>R\})}\big)=O\big(R^{-N(1-\frac1q)}\big)\quad \text{as }R\to\infty\text{ for some }q\in(q_{\rm c}(p),p].
\end{equation}

\noindent\emph{Remark. } Condition~\eqref{eq:tail.control.intermediate} is satisfied, for instance, if $|f(x,t)|\le C |x|^{-N}(1+t)^{-\gamma}$ for some $C>0$.

\noindent\textsc{Outer scales. } For outer space-time scales and $\gamma\le 1$, we assume the uniform tail control condition~\eqref{eq:tail.control.intermediate} if $p$ is subcritical,
and
\begin{equation}
\label{eq:uniform.tail.control.not.subcritical}
\sup_{t>0}\big( (1+t)^\gamma\|f(\cdot,t)\|_{L^q(\{|x|>R)}\big)=o\big(R^{-N(1-\frac1q)}\big)\quad\text{for some }q\in(q_{\rm c}(p),p]\text{ as }R\to\infty
\end{equation}
otherwise.

\noindent\emph{Remark. } Condition~\eqref{eq:uniform.tail.control.not.subcritical} is satisfied, for instance, if $|f(x,t)|\le h(x)(1+t)^{-\gamma}$ with $g(x)=o(|x|^{-N})$.

We do not claim that the above conditions are optimal; but they are not too restrictive, and are easy enough to keep the proofs simple.

\subsection{Precedents}\label{subsect-precedents}

A full description of the large-time behavior of the homogeneous problem~\eqref{eq:fully.nl} for a nontrivial initial data $u_0\in L^1(\mathbb{R}^N)$ was recently given in~\cite{Cortazar-Quiros-Wolanski-2021a,Cortazar-Quiros-Wolanski-2021b}; see also~\cite{Kemppainen-Siljander-Zacher-2017}. The first precedent for the inhomogeneous problem~\eqref{eq-f} is~\cite{Kemppainen-Siljander-Zacher-2017}, where  the authors study the problem in the \emph{integrable in time} case $\gamma>1$ and  prove, for all $p\in [1,\infty]$ if $1\le N<2\beta$, and for  $p\in [1,p_{\rm c})$ otherwise,   that
\begin{equation}
\label{eq:result.KSZ.subcritical}
\lim_{t\to\infty}t^{\sigma(p)}\|u(\cdot,t)-M_\infty Y(\cdot,t)\|_{L^p(\mathbb{R}^N)}=0,\quad\text{where }M_\infty:=\int_0^\infty\int_{\mathbb{R}^N} f(x,t)\,{\rm d}x{\rm d}t<\infty.
\end{equation}
This result is also known to be valid for the local case, $\alpha=1$, $\beta=1$, if $p=1$; see~\cite{Biler-Guedda-Karch-2004,Dolbeault-Karch-2006}. In this special local situation $Y=Z$ is the well-known fundamental solution of the heat equation, whose profile does not have a spatial singularity and belongs to all $L^p$ spaces. But a complete analysis for $\alpha=1$, $\beta\in(0,1]$ is still missing,  and will be given elsewhere~\cite{Cortazar-Quiros-Wolanski-2022-Preprint}.

The above result \eqref{eq:result.KSZ.subcritical} \emph{cannot} hold when $N>4\beta$ if $p\ge p_{\rm c}$, even if we impose additional conditions on $f$ to guarantee that $u(\cdot,t)\in L^p(\mathbb{R}^N)$ and $\gamma>1$, since $Y(\cdot,t)\not\in L^p(\mathbb{R}^N)$ in that range. On the other hand, in the subcritical range $p\in[1,p_{\rm c})$, the result does not give information on the shape of the solution in \emph{inner regions}, that is, sets of the form $\{|x|\le g(t)\}$ with $g(t)=o(t^\theta)$, since $\|Y(\cdot,t)\|_{L^p(\{|x|\le g(t)\})}=o(t^{-\sigma(p)})$ in that case. We will tackle these two difficulties along the paper.

A first step towards the understanding of the large-time behavior of solutions to~\eqref{eq-f} in different space-time scales and for all possible values of $p$ was the determination of the decay/growth rates  under the above assumptions on $f$. This was done in~\cite{Cortazar-Quiros-Wolanski-2022} for the case of large dimensions that we are considering here, and in~\cite{Cortazar-Quiros-Wolanski-2021-Preprint} for low dimensions.

\subsection{Main results}\label{subsect-statement of results}

As we have already mentioned, the decay/growth rates of solutions and their asymptotic profiles depend on the space-time scale under consideration.

\noindent\textsc{Compact sets. } Given $\mu\in (0,N)$, and $h$ satisfying suitable integrability assumptions,  let
\begin{equation}
\label{eq:definition.Riesz.potential}
E_\mu(x)=|x|^{\mu-N},\quad
I_\mu[h](x)=\int_{\mathbb{R}^N}h(x-y)E_\mu(y)\,{\rm d}y.
\end{equation}
The large-time behavior in compact sets  will be described in terms of $I_{2\beta}[g]$ and $I_{4\beta}[F]$, where $g$ is the asymptotic spatial factor of the forcing term $f$, and
\begin{equation}
\label{eq:def.F}
F(x)=\int_0^\infty f(x,s)\,{\rm d}s.
\end{equation}

\noindent\emph{Remark. } Let $c_\mu:=\Gamma\big((N-\mu)/2\big)/(\pi^{N/2}2^\mu\Gamma(\mu/2))$. Then $c_\mu I_\mu[h]$ is the $\mu$-Riesz potential of $h$, so that $(-\Delta)^{\mu/2}(c_\mu I_\mu[h])=h$.

\begin{teo}\label{teo-compacts}  Let $f$ satisfy~\eqref{eq-hypothesis f}, and also~\eqref{eq:assumption.f.p.not.subcritical} if $p\ge p_{\rm c}$. If $\gamma\le 1+\a$ we  assume moreover~\eqref{eq:hypothesis.asymp.sep.variables.1}, and also~\eqref{eq:hypothesis.asymp.sep.variables.2} if $p\ge p_{\rm c}$. Let $u$ be solution to \eqref{eq-f}. Given $K\subset\subset \R^N$,
\begin{eqnarray}
\label{eq:teo-compacts}
&\|t^{\min\{\gamma,1+\alpha\}}u(\cdot,t)-\mathcal{L}\|_{L^p(K)}\to0 \quad\text{as }t\to\infty,\quad\text{where }
\\
\label{eq:profile.L}
&\mathcal{L}=
\begin{cases}
c_{2\beta}I_{2\beta}[g]&\text{if }\gamma<1+\a,\\
c_{2\beta}I_{2\beta}[g]+\kappa I_{4\beta}[F]&\text{if }\gamma=1+\a, \quad \text{with }\kappa \text{ as in~\eqref{eq:constant.origin},}\\
\kappa I_{4\beta}[F]&\text{if }\gamma>1+\a.
\end{cases}
\end{eqnarray}
\end{teo}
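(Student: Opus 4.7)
The argument rests on Duhamel's formula \eqref{eq-formula} together with two key facts about the kernel $Y$, both consequences of the self-similar structure \eqref{eq:Y.selfsimilar}. The first is the inner pointwise asymptotic
\begin{equation*}
t^{1+\alpha} Y(x, t) \longrightarrow \kappa\,|x|^{4\beta-N}\qquad\text{as }t\to\infty,\ x\ne 0,
\end{equation*}
which is a direct rewriting of \eqref{eq:constant.origin} via \eqref{eq:Y.selfsimilar} and will produce the $\kappa I_{4\beta}[F]$ piece of $\mathcal L$. The second is the time-integral identity
\begin{equation*}
\int_0^\infty Y(x, r)\,{\rm d}r \;=\; c_{2\beta}\,|x|^{2\beta-N},
\end{equation*}
which I would derive either by a scaling change of variable in \eqref{eq:Y.selfsimilar} (the integral is homogeneous of degree $2\beta-N$) or, more conceptually, by noting that when $f\equiv g$ is time-independent the Duhamel solution of \eqref{eq-f} converges to the stationary solution $c_{2\beta} I_{2\beta}[g]$ of $(-\Delta)^\beta v = g$; this identity produces the $c_{2\beta} I_{2\beta}[g]$ piece of $\mathcal L$.

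After changing variables $r = t-s$ and splitting $\int_0^t = \int_0^{t/2} + \int_{t/2}^t$, I would treat the two halves separately. For $\gamma > 1+\alpha$, the dominant contribution comes from the \emph{old} sources $s\in(0,t/2)$, where $t-s$ is large so $Y(x-y, t-s)$ enters its inner asymptotic regime. Writing $t^{1+\alpha} Y(x-y, t-s) = (t/(t-s))^{1+\alpha}\cdot(t-s)^{1+\alpha} Y(x-y, t-s)$, the first factor is bounded on $s\le t/2$ and the second converges to $\kappa|x-y|^{4\beta-N}$; dominated convergence, with the majorant $C|x-y|^{4\beta-N}|f(y,s)|$ coming from \eqref{eq:global.estimate.Y}, yields $\kappa I_{4\beta}[F](x)$. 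The complementary piece $s\in(t/2,t)$ is shown to be $o(t^{-(1+\alpha)})$ by an $L^q$--$L^{q'}$ estimate based on \eqref{eq:p.norm.Y} and $\|f(\cdot,s)\|_{L^1}\le C(1+s)^{-\gamma}$. For $\gamma < 1+\alpha$ the roles reverse: the dominant piece comes from the \emph{recent} sources $s\in(t/2,t)$, i.e.\ $r\in(0,t/2)$, where \eqref{eq:hypothesis.asymp.sep.variables.1} gives $t^\gamma f(y, t-r)\to g(y)$ in $L^1$ uniformly on bounded $r$-sets. A truncation $r\in(0,T)$ plus dominated convergence produces $\int_0^T\!\int Y(x-y,r)g(y)\,{\rm d}y\,{\rm d}r$, and letting $T\to\infty$ together with the time-integral identity yields $c_{2\beta} I_{2\beta}[g](x)$; the tail $r\in(T,\infty)$ is uniformly controlled by $\int_T^\infty Y(x-y,r)\,{\rm d}r = O(T^{-\alpha})$, obtained from the same scaling that proves the identity. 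The remaining ``old'' contribution $r\in(t/2,t)$ decays faster than $t^{-\gamma}$ and is therefore negligible. The critical case $\gamma = 1+\alpha$ is simply the sum of the two contributions.

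The principal technical obstacle is upgrading pointwise convergence to convergence in $L^p(K)$. For subcritical $p < p_c$, the $\|Y(\cdot,t)\|_{L^p}$ bounds of \eqref{eq:p.norm.Y} and Young-type inequalities suffice. For $p \ge p_c$, the local singularity $|x-y|^{4\beta-N}$ of the profile $I_{4\beta}$ (and of $Y(x-\cdot, t-s)$ near $y=x$ when $t-s$ is small) is no longer controlled by the $L^1$ information on $f$ alone; this is precisely why the extra hypotheses \eqref{eq:assumption.f.p.not.subcritical}--\eqref{eq:hypothesis.asymp.sep.variables.2} are imposed, with the threshold $q_c(p) = Np/(2\beta p + N)$ tuned so that a Hardy--Littlewood--Sobolev / Riesz-potential estimate delivers uniform $L^p(K)$ control of the relevant convolutions. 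Making this control uniform in $t$, and interfacing it with the two-part time split above, is the delicate part of the rigorous argument.
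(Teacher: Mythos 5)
Your outline reproduces, in essence, the paper's own strategy: split the Duhamel integral at $s=t/2$, let the old sources enter the inner regime of $Y$ (via \eqref{eq:constant.origin}) to produce $\kappa I_{4\beta}[F]$, let the recent sources combine with the time integral of $Y$ to produce the $I_{2\beta}[g]$ piece, and handle $p\ge p_{\rm c}$ through Young/Riesz-potential estimates with exponents $1+\frac1p=\frac1q+\frac1r$, $q\in(q_{\rm c}(p),p]$, $r<p_{\rm c}$ — exactly the mechanism the paper uses. The genuine gap is the time-integral identity $\int_0^\infty Y(x,r)\,{\rm d}r=c_{2\beta}|x|^{2\beta-N}$, on which your $c_{2\beta}I_{2\beta}[g]$ term rests. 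The scaling change of variables in \eqref{eq:Y.selfsimilar} only gives homogeneity: it shows $\int_0^\infty Y(x,r)\,{\rm d}r=A\,|x|^{2\beta-N}$ with $A=\theta^{-1}\int_0^\infty\rho^{N-1-2\beta}\tilde G(\rho)\,{\rm d}\rho$ (cf.~\eqref{eq:definition.A} and \eqref{eq:convergence.to.A}), and the bounds \eqref{eq:interior.estimate.profile}--\eqref{eq:exterior.estimates.profile} guarantee $A<\infty$ but in no way identify $A$ with $c_{2\beta}$. Your alternative, ``more conceptual'' justification — that for time-independent $f\equiv g$ the Duhamel solution converges to the stationary solution $c_{2\beta}I_{2\beta}[g]$ — is circular: that convergence, with that constant, is precisely a special case of the theorem being proved, and your own argument, run with the scaling information alone, only yields convergence to $A\,I_{2\beta}[g]$ with $A$ unidentified. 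The paper treats this as the nontrivial step: it first proves the theorem with $A$ in place of $c_{2\beta}$ (Proposition~\ref{prop:thm.with.A}) and then shows $A=c_{2\beta}$ by a separate soft argument (Proposition~\ref{prop:datum.newtonian.potential} and its corollary): for nonnegative $g\in C^\infty_{\rm c}(\R^N)$, the difference $U=c_{2\beta}I_{2\beta}[g]-u$ is a generalized solution of the homogeneous problem \eqref{eq:Cauchy} whose datum decays like $|x|^{2\beta-N}$, hence lies in $L^p(\R^N)$ for $p>N/(2\beta)$, and Young's inequality with the $L^{p/(p-1)}$ decay of $Z$ gives $\|U(\cdot,t)\|_{L^\infty(\R^N)}=O(t^{-N\theta/p})\to0$; comparing the two limits forces $A=c_{2\beta}$. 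You must supply an argument of this kind (or, alternatively, an explicit computation identifying $\int_0^\infty Y(\cdot,t)\,{\rm d}t$ with the Riesz kernel, e.g.\ $\int_0^\infty\widehat Y(\omega,t)\,{\rm d}t=|\omega|^{-2\beta}$ via the Mittag--Leffler representation); without it your proof only delivers the asymptotic profile up to an unidentified multiplicative constant in front of $I_{2\beta}[g]$.

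Apart from this, the plan is sound but still schematic where you say it is: the passage from pointwise dominated convergence to convergence in $L^p(K)$, uniformly in $t$, is carried out in the paper by an explicit decomposition (the terms ${\rm I}$, ${\rm II}_1$--${\rm II}_3$ and their refinements) together with Minkowski's and Young's inequalities in the exponents above, plus the crude but crucial bound \eqref{eq:global.estimate.Y} and the elementary estimate \eqref{eq:est.integral.t}; your tail bound $\int_T^\infty Y(x-y,r)\,{\rm d}r=O(T^{-\alpha})|x-y|^{4\beta-N}$ and your treatment of the complementary time ranges are consistent with that scheme, so filling in these details should follow the same lines as the paper once the constant issue is resolved.
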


\noindent\emph{Remarks.} (a) We already knew from~\cite{Cortazar-Quiros-Wolanski-2022} that $\|u(\cdot,t)\|_{L^p(K)}=O(t^{\min\{\gamma,1+\alpha\}})$ for any $K\subset\subset\mathbb{R}^N$. Theorem~\ref{teo-compacts} shows that this rate is sharp, $\|u(\cdot,t)\|_{L^p(K)}\simeq t^{\min\{\gamma,1+\alpha\}}$.

\noindent (b) Under the hypotheses of Theorem~\ref{teo-compacts}, if in addition $f(x,t)=g(x)(1+t)^{-\gamma}$, the asymptotic profile~$\mathcal{L}$ simplifies to
$$
\mathcal{L}=
\begin{cases}
c_{2\beta}I_{2\beta}[g]+\frac \kappa{\alpha}I_{4\beta}[g]&\text{if }\gamma=1+\a,\\
\frac \kappa{\gamma-1}I_{4\beta}[g]&\text{if }\gamma>1+\a.
\end{cases}
$$

\noindent (c) When the forcing term is independent of time, $f(\cdot,t)=g\in L^1(\R^N)$ for all $t>0$, Theorem \ref{teo-compacts} yields
\begin{equation}
\label{eq:convergence.stationary}
\|u(\cdot,t)-c_{2\beta}I_{2\beta}[g]\|_{L^p(K)}\to0\quad\mbox{as } t\to\infty
\end{equation}
for all $p\in[1,\infty]$ (assuming also $g\in L^q_{\rm loc}(\R^N)$ for some $q\in (q_{\rm c}(p),p]$ if $p\ge p_{\rm c}$). Hence, the limit profile in compact sets is a \emph{stationary} solution of the equation. This convergence result cannot be extended to the whole space if $p\in[1,p_{\rm c}]$, since $I_{2\beta}[g]\not\in L^p(\mathbb{R}^N)$ in this range.

The convergence result~\eqref{eq:convergence.stationary} for forcing terms independent of time also holds for solutions to
\begin{equation}
\label{eq:fully.nl}
\partial_t^\a u+(-\Delta)^\beta u=f\quad\mbox{in }Q,\qquad
 	u(\cdot,0)=u_0\quad\mbox{in }\R^N
\end{equation}
for any initial datum $u_0\in L^1(\R^N)$ (with the additional assumption $u_0\in L^q_{\rm loc}(\R^N)$ for some $q\in(q_{\rm c}(p),p]$ if $p\ge p_{\rm c}$). This follows from the linearity of the problem, since the generalized solution $v$ to~\eqref{eq:Cauchy} with $v(\cdot,0)=u_0$, given by $v(\cdot,t)=Z(\cdot,t)*u_0$, satisfies $\|v(\cdot,t)\|_{L^p(K)}\simeq t^{-\a}$ for every compact $K\subset\subset\mathbb{R}^N$; see~\cite{Cortazar-Quiros-Wolanski-2021a,Cortazar-Quiros-Wolanski-2021b},

\noindent (d) Under some integrability assumptions on $h$,
$$
I_\mu[h]\approx \Big(\int_{\mathbb{R}^N} h\Big)E_\mu\quad\text{as }|x|\to\infty;
$$
see Theorem~\ref{thm:behavior.Riesz.potential} in the Appendix for the details. Hence, the \lq\lq outer limit'', $|x|\to\infty$, of the function describing the large-time behavior in compact sets is given by
\begin{equation}
\label{eq:outer.limit.compact.sets}
\begin{aligned}
&t^{-\min\{\gamma,1+\alpha\}}\mathcal{L}(x)\approx\begin{cases}
t^{-\gamma}M_0 c_{2\beta}E_{2\beta}(x),&\gamma< 1+\alpha,\\
t^{-(1+\a)}M_\infty \kappa E_{4\beta}(x),&\gamma\ge 1+\alpha,
\end{cases}
\quad\text{as }|x|\to\infty, \\
&\text{where }
M_0=\int_{\mathbb{R}^N} g,\quad M_\infty=\int_0^\infty\int_{\mathbb{R}^N} f.
\end{aligned}
\end{equation}

\noindent\textsc{Intermediate scales. } These are scales for which $|x|\simeq \varphi(t)$, with $\varphi(t)\succ1$, $ \varphi=o(t^{\theta})$.
We will make a distinction among the different intermediate scales according to their speed, measured against the value of the decay/growth exponent $\gamma$. Thus, we have
\begin{align}
\label{eq:rates.slow}
\tag{S}
&\gamma<1; \text{ or } \gamma=1,\ \varphi(t)=o\big(t^\theta/(\log t)^{\frac1{2\beta}}\big);\text{ or } \gamma\in(1,1+\alpha),\  \varphi(t)=o\big(t^{\frac{1+\alpha-\gamma}{2\beta}}\big);\\
\label{eq:rates.critical.1}
\tag{C${}_1$}
&\gamma=1, \varphi(t)\simeq t^\theta/(\log t)^{\frac1{2\beta}}; \\
\label{eq:rates.critical}
\tag{C}
&\gamma\in(1,1+\alpha),\  \varphi(t)\simeq t^{\frac{1+\alpha-\gamma}{2\beta}};\\
\label{eq:rates.fast.1}
\tag{F${}_1$}
&\gamma=1,\ \varphi(t)\succ t^\theta/(\log t)^{\frac1{2\beta}};\\
\label{eq:rates.fast}
\tag{F}
&\gamma\in(1,1+\alpha),\ \varphi(t)\succ t^{\frac{1+\alpha-\gamma}{2\beta}}; \text{ or }
\gamma\ge 1+\alpha.
\end{align}
In \emph{slow} scales, satisfying~\eqref{eq:rates.slow}, the large-time behavior coincides with the outer limit of the behavior in compact sets, being given in terms of $E_{2\beta}$. Notice that when $\gamma<1$ all scales are slow. In \emph{fast} scales, satisfying~\eqref{eq:rates.fast} or~\eqref{eq:rates.fast.1}, the behavior coincides with the inner limit of the behavior in outer scales, and is given in terms of $E_{4\beta}$. Notice that when $\gamma\ge 1+\alpha$ all intermediate scales are fast. In the critical cases,~\eqref{eq:rates.critical.1} and~\eqref{eq:rates.critical}, the large time behavior involves both $E_{2\beta}$ and $E_{4\beta}$. The cases~\eqref{eq:rates.critical.1} and~\eqref{eq:rates.fast.1} (in both cases $\gamma=1$; that is the reason for the subscript) a factor $\log t$ is involved.

Let us recall from~\cite{Cortazar-Quiros-Wolanski-2022} that $\|u(\cdot,t)\|_{L^p(\{\nu<|x|/\varphi(t)<\mu\})}=O(\phi(t))$, where
\begin{equation}
\label{eq:rate.intermediate}
\phi(t)=\begin{cases}t^{-\gamma}\varphi(t)^{\frac{1-\sigma(p)}\theta}&\text{if \eqref{eq:rates.slow},~\eqref{eq:rates.critical.1}, or~\eqref{eq:rates.critical}},
\\
t^{-(1+\a)}\log t\,\varphi(t)^{\frac{1+\alpha-\sigma(p)}\theta}&\text{if \eqref{eq:rates.fast.1}},\\
t^{-(1+\alpha)}\varphi(t)^{\frac{1+\alpha-\sigma(p)}\theta}&\text{if \eqref{eq:rates.fast}}.
\end{cases}
\end{equation}
It is worth noticing that $\sigma(p)<1$ if and only if $p\in[1,p_{\rm c})$, and $\sigma(p)<1+\alpha$ if and only if $p\in[1,p_*)$.

\begin{teo}\label{teo-intermediate}   Let $\varphi(t)\succ1$, $ \varphi=o(t^{\theta})$. Let $f$ satisfy~\eqref{eq-hypothesis f}. We  assume moreover~\eqref{eq:hypothesis.asymp.sep.variables.1} if~\eqref{eq:rates.slow} or \eqref{eq:rates.critical} hold, and both~\eqref{eq:hypothesis.asymp.sep.variables.1} and~\eqref{eq:uniform.tail.control.subcritical}  if  \eqref{eq:rates.critical.1} or \eqref{eq:rates.fast.1} hold. When $p\ge p_{\rm c}$ we assume further~\eqref{eq:tail.control.intermediate}. Let $u$ be the solution to \eqref{eq-f}. Given $0<\nu<\mu<\infty$,
\begin{gather*}
\lim_{t\to\infty}\frac1{\phi(t)}\|u(\cdot,t)-\mathcal{L}(t)\|_{L^p(\{\nu<|x|/\varphi(t)<\mu\})}=0,\quad\text{with }\phi\text{ as in~\eqref{eq:rate.intermediate}, and}
\\
\mathcal{L}(t)
=\begin{cases}
t^{-\gamma}M_0 c_{2\beta}E_{2\beta}\quad&\text{if \eqref{eq:rates.slow}},\\
t^{-1}M_0c_{2\beta} E_{2\beta}+t^{-(1+\a)}\log t\, M_0\kappa  E_{4\beta}\quad&\text{if \eqref{eq:rates.critical.1}},\\
t^{-\gamma}M_0 c_{2\beta}E_{2\beta}+t^{-(1+\a)}M_\infty \kappa E_{4\beta}\quad&\text{if \eqref{eq:rates.critical}},\\
t^{-(1+\a)} \log t\,M_0\kappa  E_{4\beta}\quad&\text{if \eqref{eq:rates.fast.1}},\\
t^{-(1+\a)}M_\infty \kappa E_{4\beta}\quad&\text{if \eqref{eq:rates.fast}}.
\end{cases}
\end{gather*}
\end{teo}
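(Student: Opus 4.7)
The plan is to work directly with the Duhamel representation~\eqref{eq-formula}, splitting the time integration at $s = t/2$ and, in the regimes where~\eqref{eq:hypothesis.asymp.sep.variables.1} is assumed, splitting the forcing as $f(y,s) = g(y)(1+s)^{-\gamma} + r(y,s)$ with $(1+s)^\gamma r(\cdot,s)\to 0$ in $L^1$. The contribution of $r$ is controlled by~\eqref{eq-hypothesis f} on $s<t/2$ (where $(1+s)^{-\gamma}$ is bounded below and $\|Y(\cdot,t-s)\|_{L^1}=1$) and by the smallness of $r$ for $s>t/2$; in the non-subcritical regime $p\ge p_{\rm c}$ one also invokes~\eqref{eq:tail.control.intermediate} to transfer those $L^1$ bounds into $L^p$-estimates on $\{\nu<|x|/\varphi(t)<\mu\}$. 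Hence, modulo an $o(\phi(t))$ error in the relevant $L^p$ norm, it suffices to analyze
$$V(x,t):=\int_0^t\int_{\R^N} Y(x-y,t-s)\,g(y)(1+s)^{-\gamma}\,dy\,ds$$
in regimes~\eqref{eq:rates.slow},~\eqref{eq:rates.critical.1},~\eqref{eq:rates.critical} and~\eqref{eq:rates.fast.1}; regime~\eqref{eq:rates.fast} I handle directly on $u$ using $\int_0^\infty\!\int f = M_\infty<\infty$.

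To analyze $V$, I split $V=V_1+V_2$ according to $s\in(0,t/2)$ and $s\in(t/2,t)$. For $V_1$, $\tau=t-s\simeq t$, so $\tau^\theta\gg\varphi(t)\simeq|x|$, and the inner limit~\eqref{eq:constant.origin} together with the gradient bound~\eqref{eq:bound.DY} yield, after decomposing $g = g_R + (g-g_R)$ with $g_R$ compactly supported,
$$\int_{\R^N} Y(x-y,\tau)g(y)\,dy = M_0\kappa\tau^{-(1+\alpha)}|x|^{4\beta-N}\bigl(1+o(1)\bigr)$$
uniformly on $|x|\simeq\varphi(t)$. Carrying out $\int_0^{t/2}(t-s)^{-(1+\alpha)}(1+s)^{-\gamma}\,ds$ gives the three sub-cases $(\gamma-1)^{-1}t^{-(1+\alpha)}$, $t^{-(1+\alpha)}\log t$, $(1-\gamma)^{-1}t^{-(\gamma+\alpha)}$ for $\gamma>1$, $\gamma=1$, $\gamma<1$ respectively; multiplying by $M_0\kappa E_{4\beta}(x)$ produces the $E_{4\beta}$ components of $\mathcal{L}(t)$, noting that $M_\infty = M_0/(\gamma-1)$ for the model forcing when $\gamma>1$.

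For $V_2$, $(1+s)^{-\gamma}=t^{-\gamma}(1+o(1))$ uniformly on $(t/2,t)$, so
$$V_2(x,t) = t^{-\gamma}\bigl(1+o(1)\bigr)\int_0^{t/2}\int_{\R^N} Y(x-y,\tau) g(y)\,dy\,d\tau.$$
The identity $\int_0^\infty Y(x,\tau)\,d\tau = c_{2\beta}|x|^{2\beta-N}$, which is the steady-state limit behind Remark~(c) applied to a time-independent forcing, gives $\int_0^{t/2}\int Y(x-y,\tau)g(y)\,dy\,d\tau = c_{2\beta}I_{2\beta}[g](x)+o(|x|^{2\beta-N})$ at the scale $|x|\simeq\varphi(t)$ (the tail for $\tau>t/2$ being of order $t^{-\alpha}|x|^{4\beta-N}$ by the inner asymptotics, hence negligible since $\varphi(t)^{2\beta}=o(t^\alpha)$). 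Combined with $I_{2\beta}[g](x)\sim M_0|x|^{2\beta-N}$ from the outer asymptotics of Riesz potentials (Theorem~\ref{thm:behavior.Riesz.potential}), this yields $V_2=t^{-\gamma}M_0 c_{2\beta}E_{2\beta}(x)(1+o(1))$. Comparing the orders of $V_1$ and $V_2$, the ratio $t^{\gamma-1-\alpha}\varphi(t)^{2\beta}$, with an additional $\log t$ factor at $\gamma=1$, is exactly what discriminates the five regimes and delivers the formula for $\mathcal{L}(t)$; regime~\eqref{eq:rates.fast} follows by the $V_1$-type argument with $\int_0^{t/2}\int f\to M_\infty$ and the time-tail control inherited from $\gamma\ge 1+\alpha$ or $\varphi\succ t^{(1+\alpha-\gamma)/(2\beta)}$.

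The main obstacle I expect is the uniform control of the convolution approximation $(Y(\cdot,\tau)*g)(x)\approx M_0 Y(x,\tau)$ at the intermediate scale: a naive first-order Taylor expansion would demand a finite first moment on $g$, so one must combine~\eqref{eq:bound.DY} on a compactly supported piece $g_R$ with the $L^1$-smallness of $g-g_R$, then transfer these pointwise estimates into $L^p$-estimates on the annulus via~\eqref{eq:tail.control.intermediate} when $p\ge p_{\rm c}$. A secondary delicate point is the precise matching of the logarithmic factors in the $\gamma=1$ cases, where the cut at $s=t/2$ has to be adjusted (or refined via a parameter $s=\lambda t$ with $\lambda\to 0$ after $t\to\infty$) and the auxiliary tail control~\eqref{eq:uniform.tail.control.subcritical} is needed to handle the boundary between the two regimes cleanly.
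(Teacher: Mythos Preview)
Your reduction to the model forcing $V$ has a genuine gap in regime~\eqref{eq:rates.critical}. You claim that the remainder $r(y,s)=f(y,s)-g(y)(1+s)^{-\gamma}$ contributes only $o(\phi(t))$, but this is false: for $\gamma\in(1,1+\alpha)$ the space--time integral $\int_0^\infty\!\int_{\R^N} r = M_\infty - M_0/(\gamma-1)$ is generically nonzero, and the very analysis you carry out for $V_1$ (the $s<t/2$ piece, yielding the $E_{4\beta}$ contribution) applies equally to $r$, producing an additional $(M_\infty - M_0/(\gamma-1))\,\kappa\, t^{-(1+\alpha)}E_{4\beta}(x)$ at leading order. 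This is precisely what is needed to turn your constant $M_0/(\gamma-1)$ into the correct $M_\infty$ --- but declaring $r$ negligible would lose it. The underlying issue is that hypothesis~\eqref{eq:hypothesis.asymp.sep.variables.1} only makes $(1+s)^\gamma r(\cdot,s)$ small for \emph{large} $s$, whereas the $E_{4\beta}$ term is built from all of $s\in(0,t/2)$, including $s$ of order~$1$ where $r$ carries nontrivial mass.

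Several further slips would need repair even in the other regimes: the gradient bound~\eqref{eq:bound.DY} you invoke for the approximation $(Y(\cdot,\tau)*g)(x)\approx M_0 Y(x,\tau)$ is valid only for $|x|\ge\nu\tau^\theta$, not at the intermediate scale $|x|\simeq\varphi(t)=o(t^\theta)$ --- one must instead use~\eqref{eq:constant.origin} to replace $Y(x-y,\tau)$ by $\kappa\tau^{-(1+\alpha)}E_{4\beta}(x-y)$ and then a mean-value argument on $E_{4\beta}$; the claim $(1+s)^{-\gamma}=t^{-\gamma}(1+o(1))$ uniformly on $s\in(t/2,t)$ is off by a factor $2^\gamma$, forcing a split at $\delta t$ with $\delta$ small (which you acknowledge only for $\gamma=1$); and $\|Y(\cdot,\tau)\|_{L^1}=\tau^{\alpha-1}$, not~$1$. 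The paper avoids the decomposition $f=g(1+s)^{-\gamma}+r$ altogether: it works directly with a spatial split at $|x-y|=\ell(t)|x|$ (with $\ell(t)\to0$ but $\ell(t)\varphi(t)\to\infty$) together with time splits at $t/2$ and $t-t^{1-\delta(t)}$, which lets the true quantity $\int_0^t\!\int f\to M_\infty$ enter directly in the regimes where it matters and sidesteps the bookkeeping your route requires.
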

\noindent\emph{Remarks. } (a) If $M_0,M_\infty\neq0$, then $\|\mathcal{L}(t)\|_{L^p(\{\nu<|x|/\varphi(t)<\mu\})}\simeq \phi(t)$ in all cases, since
\begin{equation}\label{eq:behaviour.powers}
\|E_{2\beta}\|_{L^p(\{\nu <|x|/\varphi(t)<\mu\})}\simeq\varphi(t)^{\frac{1-\sigma(p)}\theta},\qquad \|E_{4\beta}\|_{L^p(\{\nu <|x|/\varphi(t)<\mu\})}\simeq\varphi(t)^{\frac{1+\alpha-\sigma(p)}\theta}.
\end{equation}
As a corollary,  $\|u(\cdot,t)\|_{L^p(\{\nu<|x|/\varphi(t)<\mu\})}\simeq\phi(t)$.

\noindent (b) The behavior in \lq\lq inner'' intermediate scales is given by
$$
t^{-\gamma}M_0 c_{2\beta}E_{2\beta}\quad\text{if }\gamma<1+\alpha,\qquad t^{-(1+\a)}M_\infty \kappa E_{4\beta}\quad\text{if }\gamma\ge1+\alpha.
$$
This coincides with the \lq\lq outer limit'' of the behavior in compact sets; see~\eqref{eq:outer.limit.compact.sets}.

\medskip

\noindent\textsc{Exterior scales. }  These are scales for which $|x|\ge \nu t^{\theta}$, $\nu>0$. We already know from~\cite{Cortazar-Quiros-Wolanski-2022} that
\begin{equation}
\label{eq:rate.exterior}
\|u(\cdot,t)\|_{L^p(\{|x|>\nu t^\theta\})}=O(\phi(t)),\quad\text{where }
\phi(t)=\begin{cases}
t^{1-\gamma-\sigma(p)},&\gamma<1,\\
t^{-\sigma(p)}\log t,&\gamma=1,\\
t^{-\sigma(p)},&\gamma>1.
\end{cases}
\end{equation}
The asymptotic behavior of $u$ in such scales is given by a time convolution of $Y(\cdot,t)$ with the \lq\lq mass'' of $f$ at time $t$,
$$
M_f(t):=\int_{\mathbb{R}^N} f(x,t)\,{\rm d}x.
$$
\begin{teo}
\label{thm:outer.general}
Let $f$ satisfy~\eqref{eq-hypothesis f}. If $\gamma\le1$, assume moreover~\eqref{eq:uniform.tail.control.subcritical} if $p\in [1,p_{\rm c})$, and~\eqref{eq:uniform.tail.control.not.subcritical} if $p\ge p_{\rm c}$.
Then, given $\nu>0$,
$$
\lim_{t\to\infty}\frac1{\phi(t)}\Big\|u(\cdot,t)-\int_0^t M_f(s)Y(\cdot,t-s)\,{\rm d}s\Big\|_{L^p(\{|x|>\nu t^{\theta}\})}=0, \quad\text{with }\phi\text{ as in~\eqref{eq:rate.exterior}}.
$$
\end{teo}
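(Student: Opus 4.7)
The plan is to write the difference as
$$
u(x,t)-\int_0^t M_f(s)Y(x,t-s)\,{\rm d}s=\int_0^t\!\!\int_{\mathbb{R}^N}\bigl[Y(x-y,t-s)-Y(x,t-s)\bigr]f(y,s)\,{\rm d}y\,{\rm d}s,
$$
and to estimate it in $L^p(\{|x|>\nu t^\theta\})$ by splitting the spatial integration at a cutoff $|y|=R(t)$ chosen so that $R(t)\to\infty$ and $R(t)=o(t^{\theta})$ as $t\to\infty$.

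For the inner region $|y|\le R$, since $R\le|x|/2$ for $|x|>\nu t^\theta$ and $t$ large, the segment from $x-y$ to $x$ lies entirely in $\{|\xi|\ge|x|/2\ge(\nu/2)(t-s)^\theta\}$, so the gradient estimate~\eqref{eq:bound.DY} gives, via the mean value theorem,
$$
|Y(x-y,t-s)-Y(x,t-s)|\le C|y|(t-s)^{2\alpha-1}|x|^{-(N+2\beta+1)}.
$$
Integrating against $|f(\cdot,s)|$ using~\eqref{eq-hypothesis f}, evaluating the time integral $T_\gamma(t):=\int_0^t(t-s)^{2\alpha-1}(1+s)^{-\gamma}\,{\rm d}s$ by splitting at $s=t/2$, and taking the $L^p$ norm on $\{|x|>\nu t^\theta\}$ with the help of the identities $\theta\cdot2\beta=\alpha$ and $\sigma(p)=1-\alpha+\theta N(1-1/p)$, the inner contribution is seen to be bounded by $C(R/t^\theta)\,\phi(t)=o(\phi(t))$ in all three ranges of~$\gamma$.

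For the outer region $|y|>R$, dominate the integrand by $Y(x-y,t-s)|f(y,s)|+Y(x,t-s)|f(y,s)|$ and treat the two resulting integrals separately. When $p\in[1,p_{\rm c})$, Young's inequality $\|Y(\cdot,t-s)\ast(f\chi_{|y|>R})\|_{L^p}\le\|Y(\cdot,t-s)\|_{L^p(\mathbb{R}^N)}\|f(\cdot,s)\|_{L^1(|y|>R)}$ together with~\eqref{eq:p.norm.Y} reduces matters to the time integral $\int_0^t(t-s)^{-\sigma(p)}\|f(\cdot,s)\|_{L^1(|y|>R)}\,{\rm d}s$, which is $o(\phi(t))$ either because of a gain $\ep(R)\to 0$ coming from the $L^1$ tail control~\eqref{eq:uniform.tail.control.subcritical} (when $\gamma\le 1$) or by dominated convergence (when $\gamma>1$, since $\|f(\cdot,s)\|_{L^1}\in L^1_s(\mathbb{R}_+)$ and $\|f(\cdot,s)\|_{L^1(|y|>R)}\to 0$ pointwise in $s$). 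The term involving $Y(x,t-s)$ is treated analogously, using the exterior estimate~\eqref{eq:exterior.estimate.Y} to compute the $L^p(\{|x|>\nu t^\theta\})$ norm of $Y(\cdot,t-s)$.

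The main obstacle is the case $p\ge p_{\rm c}$, where the $L^1$ tail of $f$ need not be small and one must rely on the finer $L^q$ tail~\eqref{eq:uniform.tail.control.not.subcritical} instead. A direct application of Young's $L^p\supset L^r\ast L^q$ with $1/r=1+1/p-1/q$ yields a rate $(t-s)^{-\sigma(r)}$ overshooting the target by a factor $(t^\theta/R)^{N/q'}$. To beat this loss one exploits further the hypothesis $|x|>\nu t^\theta$ by subdividing the outer region at $|y|=|x|/2$: on $R<|y|\le|x|/2$ the exterior bound~\eqref{eq:exterior.estimate.Y} applies to $Y(x-y,t-s)$, and a H\"older estimate against the $L^q$ tail produces a decay in $|x|$ matching $\sigma(p)$; on $|y|>|x|/2$ the radius $|x|/2\ge\nu t^\theta/2$ is itself in the exterior regime, so the $L^q$ tail at this radius is uniformly small. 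Choosing $R(t)=t^\theta/K$ for a large constant $K$, the inner contribution is then $O(K^{-1}\phi(t))$ and the outer contribution is $o(\phi(t))$ for each fixed $K$; sending $K\to\infty$ after taking $t\to\infty$ completes the argument.
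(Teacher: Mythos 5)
Your proposal is correct and follows essentially the same route as the paper's proof: the same representation of the difference, the Mean Value Theorem with the gradient bound~\eqref{eq:bound.DY} on the near region, and Young's inequality with~\eqref{eq:p.norm.Y} (or the $L^q$--$L^r$ version for $p\ge p_{\rm c}$) combined with the tail hypotheses~\eqref{eq:uniform.tail.control.subcritical}/\eqref{eq:uniform.tail.control.not.subcritical}, or with $f\in L^1(Q)$ when $\gamma>1$, on the far region, together with~\eqref{eq:est.integral.t} and $\|E_{-2\beta}\|_{L^p(\{|x|>\nu t^\theta\})}\simeq t^{-\sigma(p)-2\alpha+1}$. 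The only difference is bookkeeping: the paper cuts at $|y|=\delta|x|$ with $\delta$ small and fixed from the outset---which on $\{|x|>\nu t^\theta\}$ is exactly what your final choice $R(t)=t^\theta/K$ amounts to---so your extra subdivision at $|y|=|x|/2$ and the H\"older step on the intermediate ring are absorbed there into a single Young estimate against the $L^q$ tail at radius $\delta\nu t^\theta$.
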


\noindent\emph{Remarks. } (a) Notice that $|x|(t-s)^{-\theta}\ge \nu$ if $s\in(0,t)$ and $|x|>\nu t^\theta$. Therefore,
using~\eqref{eq:exterior.estimate.Y},
$$
\|Y(\cdot,t-s)\Big\|_{L^p(\{|x|>\nu t^{\theta}\})}\le Ct^{-\alpha-N\theta(1-\frac1p)}(t-s)^{2\alpha-1}\quad \text{for all }s\in(0,t).
$$
On the other hand,~\eqref{eq-hypothesis f} yields $|M_f(s)|\le C(1+s)^{-\gamma}$, and we conclude easily that
$$
\Big\|\int_0^t M_f(s)Y(\cdot,t-s)\,{\rm d}s\Big\|_{L^p(\{|x|>\nu t^{\theta}\})}\le C t^{-\alpha-N\theta(1-\frac1p)}\int_0^t (1+s)^{-\gamma}(t-s)^{2\alpha-1}\,{\rm d}s= O(\phi(t)).
$$
Hence, $\|u(\cdot,t)\|_{L^p(\{|x|>\nu t^{\theta}\})}=O(\phi(t))$.

\noindent (b) Assume that $M_f(s)(1+s)^\gamma\ge c$ or $M_f(s)(1+s)^\gamma\le -c$ for some $c>0$, a condition that is satisfied, for instance, if $f$ is of separate variables, $f(x,t)=g(x)(1+t)^{-\gamma}$, and $\int_{\mathbb{R}^N}g\neq0$. If $\beta\in(0,1)$, then $G(\xi)\simeq E_{-2\beta}(\xi)$ for $|\xi|>\nu>0$; see~\cite{Kemppainen-Siljander-Zacher-2017}. Therefore, for some constants $c_\nu>0$, which may change from line to line,
$$
\begin{aligned}
\Big\|\int_0^t M_f(s)Y(\cdot,t-s)\,{\rm d}s\Big\|_{L^p(\{|x|>\nu t^{\theta}\})}&\ge c_\nu\|E_{-2\beta}\|_{L^p(\{|x|>\nu t^{\theta}\})}
\int_0^t (1+s)^{-\gamma}(t-s)^{2\alpha-1}\,{\rm d}s\\
&\ge c_\nu
t^{-\alpha-N\theta(1-\frac1p)}\int_0^t (1+s)^{-\gamma}(t-s)^{2\alpha-1}\,{\rm d}s\simeq\phi(t).
\end{aligned}
$$
We conclude that
$$
\Big\|\int_0^t M_f(s)Y(\cdot,t-s)\,{\rm d}s\Big\|_{L^p(\{|x|>\nu t^{\theta}\})}\simeq\phi(t),
$$
and hence we have the sharp rate $\|u(\cdot,t)\|_{L^p(\{|x|>\nu t^{\theta}\})}\simeq \phi(t)$.

\medskip

When $\gamma\ge1$, we can avoid the time convolution in the description of the large time behavior, which is now given by $M(t)t^{1-\alpha}Y(\cdot,t)$, where $M(t)$ is the mass of the solution $u$ at time $t$,
$$
M(t):=\int_{\mathbb{R}^N}u(x,t)\,{\rm d}x.
$$
Let us remark that Fubini's theorem plus the fact that $\int_{\mathbb{R}^N}Y(x,t)\,{\rm d}x=t^{\alpha-1}$
yield
$$
M(t)
=\int_0^t M_f(s)(t-s)^{\a-1}\,{\rm d}s.
$$
Hence, $M(t)$ can be computed directly in terms of the forcing term $f$ without determining $u$.
\begin{teo}
\label{thm:outer.gamma.ge.1}
Under the assumptions of Theorem~\ref{thm:outer.general}, if $\gamma\ge1$, then
\[
\frac1{\phi(t)}\Big\|\int_0^t M_f(s)Y(\cdot,t-s)\,{\rm d}s-M(t)t^{1-\alpha}Y(\cdot,t)\Big\|_{L^p(\{|x|>\nu t^{\theta}\})}\to0\quad
\mbox{as }t\to\infty.
\]
As a corollary,
$$
\lim_{t\to\infty}\frac1{\phi(t)}\|u(\cdot,t)-M(t)t^{1-\alpha}Y(\cdot,t)\|_{L^p(\{|x|>\nu t^{\theta}\})}=0.
$$
\end{teo}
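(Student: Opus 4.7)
The stated corollary is immediate from Theorem~\ref{thm:outer.general} and the triangle inequality, so the task is to estimate the difference between the two ``convolution'' expressions. Exploiting the self-similar form $Y(x,\tau)=\tau^{\alpha-1}H(\tau,x)$ with $H(\tau,x):=\tau^{-N\theta}G(x\tau^{-\theta})$, the integrand factors as
$$
Y(x,t-s)-t^{1-\alpha}(t-s)^{\alpha-1}Y(x,t)=(t-s)^{\alpha-1}\bigl[H(t-s,x)-H(t,x)\bigr],
$$
and the quantity to bound in $L^p(\{|x|>\nu t^\theta\})$ becomes
$$
\mathcal{R}(x,t):=\int_0^t M_f(s)\,(t-s)^{\alpha-1}\bigl[H(t-s,x)-H(t,x)\bigr]\,{\rm d}s.
$$
I would split the $s$-integral as $\int_0^{t/2}+\int_{t/2}^t$ and treat the two pieces with different strategies.

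For $s\in[0,t/2]$ the mean value theorem in $\tau$ is effective. The chain rule gives
$$
\partial_\tau H(\tau,x)=-\theta\,\tau^{-N\theta-1}\bigl[N\,G(\xi)+\xi\cdot\nabla G(\xi)\bigr],\qquad \xi=x\tau^{-\theta}.
$$
In the outer region $|x|>\nu t^\theta$ with $\tau\in[t/2,t]$ one has $|\xi|\gtrsim \nu$, so the exterior bounds~\eqref{eq:exterior.estimates.profile} for $G$ and $\nabla G$, together with the identity $2\beta\theta=\alpha$, yield $|\partial_\tau H(\tau,x)|\le C\tau^{\alpha-1}|x|^{-(N+2\beta)}$. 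Integrating in $\tau\in[t-s,t]$ (where $\tau^{\alpha-1}$ is decreasing since $\alpha<1$) produces $|H(t-s,x)-H(t,x)|\le C\,s\,t^{\alpha-1}|x|^{-(N+2\beta)}$, and combining this with $(t-s)^{\alpha-1}\le C t^{\alpha-1}$ gives
$$
\int_0^{t/2}|M_f(s)|(t-s)^{\alpha-1}\bigl|H(t-s,x)-H(t,x)\bigr|\,{\rm d}s\le C\,t^{2\alpha-2}|x|^{-(N+2\beta)}\int_0^{t/2}\!\! s(1+s)^{-\gamma}\,{\rm d}s.
$$

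For $s\in[t/2,t]$ a mean value argument is no longer useful, so I would simply bound the two terms in the definition of $\mathcal{R}$ separately, applying the exterior estimate~\eqref{eq:exterior.estimate.Y}, which gives $Y(x,\tau)\le C\tau^{2\alpha-1}|x|^{-(N+2\beta)}$ whenever $|x|\ge\nu\tau^\theta$ (note $|x|>\nu t^\theta\ge\nu(t-s)^\theta$). Since $|M_f(s)|\le Ct^{-\gamma}$ for $s\ge t/2$, and both $\int_{t/2}^t (t-s)^{2\alpha-1}\,{\rm d}s$ and $t^\alpha\int_{t/2}^t (t-s)^{\alpha-1}\,{\rm d}s$ are $O(t^{2\alpha})$, this yields
$$
\int_{t/2}^t\!|M_f(s)|\bigl[Y(x,t-s)+t^{1-\alpha}(t-s)^{\alpha-1}Y(x,t)\bigr]\,{\rm d}s\le C\,t^{2\alpha-\gamma}|x|^{-(N+2\beta)}.
$$

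Finally, I would take $L^p$ norms over $\{|x|>\nu t^\theta\}$, using $\bigl\|\,|x|^{-(N+2\beta)}\bigr\|_{L^p(\{|x|>\nu t^\theta\})}\simeq t^{-\alpha-N\theta(1-1/p)}$, and compare the resulting bound with $\phi(t)$ from~\eqref{eq:rate.exterior}. A short case analysis in $\gamma\in\{1\}\cup(1,2)\cup\{2\}\cup(2,\infty)$, in which $\int_0^{t/2}s(1+s)^{-\gamma}\,{\rm d}s$ is, respectively, of order $t$, $t^{2-\gamma}$, $\log t$, or bounded, together with the $[t/2,t]$ bound $t^{2\alpha-\gamma}$, confirms that both contributions are $o(\phi(t))$. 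The critical case is $\gamma=1$: the two pieces each produce a term of exactly order $t^{-\sigma(p)}$, which is absorbed only by the extra $\log t$ factor appearing in $\phi(t)$ in that regime. Verifying this tight cancellation—and carefully tracking exponents so that the identity $2\beta\theta=\alpha$ provides the correct scaling—is the main technical point; in all other regimes there is room to spare.
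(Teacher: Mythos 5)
Your proposal is correct and follows essentially the same route as the paper: the difference is rewritten as $\int_0^t M_f(s)(t-s)^{\alpha-1}\bigl[H(t-s,x)-H(t,x)\bigr]{\rm d}s$ with $H(\tau,x)=\tau^{1-\alpha}Y(x,\tau)$, the mean value theorem plus the exterior kernel bounds \eqref{eq:exterior.estimate.Y}--\eqref{eq:bound.Yt} (equivalently, the profile estimates you invoke) give $|\partial_\tau H|\le C\tau^{\alpha-1}E_{-2\beta}(x)$ for the early times, and the late times are bounded term by term, exactly as in the paper. The only deviation is that you split at $t/2$ and keep the factor $s$ inside $\int_0^{t/2}s(1+s)^{-\gamma}{\rm d}s$ instead of splitting at $\delta t$ with $\delta$ small; this is a cosmetic variation which, as you note, even yields a genuine $o(\phi(t))$ bound (of order $t^{-\sigma(p)}$ against $\phi(t)=t^{-\sigma(p)}\log t$) in the critical case $\gamma=1$, avoiding the $\varepsilon$--$\delta$ step.
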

When $\gamma>1$ things simplify even further, since $M(t)t^{1-\alpha}$ has a computable limit.
\begin{teo}
\label{thm:outer.gamma.gt.1}
Let $\gamma>1$. Under the assumptions of Theorem~\ref{thm:outer.general}, $\displaystyle\lim\limits_{t\to\infty}M(t)t^{1-\alpha}=M_\infty$.
As a corollary,
\begin{equation}
\label{eq-exterior gamma larger 1}
\lim_{t\to\infty}t^{\sigma(p)}\|u(\cdot,t)-M_\infty Y(\cdot,t)\|_{L^p(\{|x|>\nu t^{\theta}\})}=0.
\end{equation}
\end{teo}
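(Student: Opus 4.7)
The plan is to deal with the two parts of the theorem separately. For the limit $M(t)t^{1-\alpha}\to M_\infty$, I will start from the formula
\[
M(t)t^{1-\alpha}=t^{1-\alpha}\int_0^t M_f(s)(t-s)^{\alpha-1}\,{\rm d}s=\int_0^t M_f(s)\Big(1-\frac{s}{t}\Big)^{\alpha-1}{\rm d}s,
\]
and split the integral at $t/2$. On $[0,t/2]$ we have $(1-s/t)^{\alpha-1}\le 2^{1-\alpha}$, so the integrand is dominated by $C(1+s)^{-\gamma}$, which is integrable because $\gamma>1$; hence dominated convergence yields $\int_0^{t/2}M_f(s)(1-s/t)^{\alpha-1}{\rm d}s\to\int_0^\infty M_f(s)\,{\rm d}s=M_\infty$. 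On $[t/2,t]$, we bound $|M_f(s)|\le Ct^{-\gamma}$ by~\eqref{eq-hypothesis f} and compute $\int_{t/2}^t(1-s/t)^{\alpha-1}{\rm d}s=Ct$, so the remainder is $O(t^{1-\gamma})=o(1)$.

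For the corollary, I will combine Theorem~\ref{thm:outer.gamma.ge.1} (which applies since $\gamma>1\ge1$) with a triangle inequality:
\[
\|u(\cdot,t)-M_\infty Y(\cdot,t)\|_{L^p(\{|x|>\nu t^\theta\})}\le \|u(\cdot,t)-M(t)t^{1-\alpha}Y(\cdot,t)\|_{L^p(\{|x|>\nu t^\theta\})}+|M(t)t^{1-\alpha}-M_\infty|\,\|Y(\cdot,t)\|_{L^p(\{|x|>\nu t^\theta\})}.
\]
When $\gamma>1$ the rate in~\eqref{eq:rate.exterior} is $\phi(t)=t^{-\sigma(p)}$, so multiplying by $t^{\sigma(p)}$ the first term vanishes by Theorem~\ref{thm:outer.gamma.ge.1}. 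For the second term I need $\|Y(\cdot,t)\|_{L^p(\{|x|>\nu t^\theta\})}\le Ct^{-\sigma(p)}$ uniformly in $t$. For $p\in[1,p_*)$ this is immediate from~\eqref{eq:p.norm.Y}, and for $p\ge p_*$ it follows by directly integrating the exterior pointwise bound~\eqref{eq:exterior.estimate.Y}: computing $\int_{|x|>\nu t^\theta}|x|^{-p(N+2\beta)}{\rm d}x\simeq t^{\theta(N-p(N+2\beta))}$ and combining with the factor $t^{2\alpha-1}$ gives exactly the exponent $-\sigma(p)=-(1-\alpha)-N\theta(1-1/p)$ after using $\theta=\alpha/(2\beta)$. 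Then $t^{\sigma(p)}|M(t)t^{1-\alpha}-M_\infty|\,\|Y(\cdot,t)\|_{L^p}\le C|M(t)t^{1-\alpha}-M_\infty|\to0$ by the first part.

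I do not expect any real obstacle: the first limit is a standard Abel-type computation exploiting $\gamma>1$, and the corollary is a soft consequence of the previous theorem once the uniform $L^p$ bound of $Y$ on exterior scales is recorded. The only minor point requiring care is the estimate of $\|Y(\cdot,t)\|_{L^p(\{|x|>\nu t^\theta\})}$ in the supercritical range $p\ge p_*$, since there $Y(\cdot,t)\notin L^p(\R^N)$ and one must use the exterior pointwise bound~\eqref{eq:exterior.estimate.Y} rather than the global formula~\eqref{eq:p.norm.Y}.
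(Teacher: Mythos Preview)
Your proof is correct and follows essentially the same route as the paper. The only technical difference is in the limit $M(t)t^{1-\alpha}\to M_\infty$: you split at $t/2$ and invoke dominated convergence on $[0,t/2]$, whereas the paper splits at $\delta t$ with $\delta$ small and uses the Mean Value Theorem to estimate $(t-s)^{\alpha-1}-t^{\alpha-1}\le C\delta t^{\alpha-1}$ on $(0,\delta t)$ together with the tail $\int_{\delta t}^\infty\int|f|\to0$; both arguments are equivalent in strength, and your DCT version is arguably cleaner. Your treatment of the corollary via the triangle inequality and the exterior bound $\|Y(\cdot,t)\|_{L^p(\{|x|>\nu t^\theta\})}\le Ct^{-\sigma(p)}$ (which you correctly derive from~\eqref{eq:exterior.estimate.Y} for all $p$) is exactly what the paper leaves implicit.
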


\noindent\emph{Remarks. } (a) The result~\eqref{eq-exterior gamma larger 1} was already known when $p\in[1,p_{\rm c})$; see~\cite{Kemppainen-Siljander-Zacher-2017}.

\noindent (b) Since $\|Y(\cdot,t)\|_{L^p(\{|x|>\nu t^{\theta}\})}=Ct^{-\sigma(p)}$ (see Subsection~\ref{subsect-estimates W}), if $\gamma>1$ and $M_\infty\neq0$  we obtain as a corollary that $\|u(\cdot,t)\|_{L^p(\{|x|>\nu t^{\theta}\})}\simeq t^{-\sigma(p)}$, without assuming that $f$ is of separate variables.

\noindent (c) When $\gamma>1$, the inner limit, $|\xi|\to 0$, of the outer profile is given by
$$
M_\infty Y(x,t)\approx M_\infty t^{-\sigma_*}\kappa E_{4\beta}(\xi)=t^{-(1+\alpha)}M_\infty \kappa E_{4\beta}(x),
$$
which coincides with the behavior for \lq\lq outer'' intermediate scales; see Theorem~\ref{teo-intermediate}, case~\eqref{eq:rates.fast}.

\medskip 	

The asymptotic limit can also be simplified when $\gamma=1$, at the expense of asking $f$ to be asymptotically of separate variables.
\begin{teo}
\label{thm:outer.gamma.equal.1}
Let $\gamma=1$. Under the assumption~\eqref{eq:hypothesis.asymp.sep.variables.1},
$M(t)=M_0t^{\alpha-1}\log t(1+o(1))$. As a corollary,
$$
\lim_{t\to\infty}\frac{t^{\sigma(p)}}{\log t}\|u(\cdot,t)-M_0\log t\,Y(\cdot,t)\|_{L^p(\{|x|>\nu t^{\theta}\})}=0.
$$
\end{teo}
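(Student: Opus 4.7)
The plan is to prove the asymptotic formula for $M(t)$ first, and then derive the corollary by combining it with Theorem~\ref{thm:outer.gamma.ge.1}. The key input from hypothesis~\eqref{eq:hypothesis.asymp.sep.variables.1} with $\gamma=1$ is the scalar statement
$$
\bigl|M_f(s)(1+s)-M_0\bigr|\le \bigl\|f(\cdot,s)(1+s)-g\bigr\|_{L^1(\mathbb R^N)}\longrightarrow 0\quad\text{as }s\to\infty,
$$
obtained by integrating in $x$. Writing $M_f(s)=M_0/(1+s)+r(s)$, we thus have $r(s)(1+s)=o(1)$ and, from~\eqref{eq-hypothesis f}, the a priori bound $|r(s)|\le C(1+s)^{-1}$.

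Starting from $M(t)=\int_0^t M_f(s)(t-s)^{\alpha-1}\,\mathrm{d}s$, the plan is to split
$$
M(t)=M_0\int_0^t(1+s)^{-1}(t-s)^{\alpha-1}\,\mathrm{d}s+\int_0^t r(s)(t-s)^{\alpha-1}\,\mathrm{d}s.
$$
For the main term, I would split the integral at $s=t/2$: on $[0,t/2]$ the factor $(t-s)^{\alpha-1}\simeq t^{\alpha-1}$ and $\int_0^{t/2}(1+s)^{-1}\,\mathrm{d}s\sim\log t$, producing the leading order $M_0 t^{\alpha-1}\log t$; on $[t/2,t]$ we have $(1+s)^{-1}\simeq t^{-1}$ and $\int_{t/2}^t(t-s)^{\alpha-1}\,\mathrm{d}s=(t/2)^\alpha/\alpha$, so that piece contributes only $O(t^{\alpha-1})=o(t^{\alpha-1}\log t)$. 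For the remainder, fix $\varepsilon>0$ and choose $T$ with $|r(s)|\le\varepsilon/(1+s)$ for $s\ge T$; then the integral over $[0,T]$ is bounded by $C(t-T)^{\alpha-1}\log(1+T)=O(t^{\alpha-1})$ for large $t$, while the integral over $[T,t]$ is bounded by $\varepsilon\int_0^t(1+s)^{-1}(t-s)^{\alpha-1}\,\mathrm{d}s=O(\varepsilon\, t^{\alpha-1}\log t)$ by the same computation. Letting $\varepsilon\to0$ after $t\to\infty$ yields $\int_0^t r(s)(t-s)^{\alpha-1}\,\mathrm{d}s=o(t^{\alpha-1}\log t)$, and hence $M(t)=M_0 t^{\alpha-1}\log t\,(1+o(1))$.

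For the corollary, Theorem~\ref{thm:outer.gamma.ge.1} applies (with $\phi(t)=t^{-\sigma(p)}\log t$ for $\gamma=1$), giving
$$
\bigl\|u(\cdot,t)-M(t)t^{1-\alpha}Y(\cdot,t)\bigr\|_{L^p(\{|x|>\nu t^\theta\})}=o\bigl(t^{-\sigma(p)}\log t\bigr).
$$
The just-proved asymptotic gives $M(t)t^{1-\alpha}-M_0\log t=o(\log t)$, and since $\|Y(\cdot,t)\|_{L^p(\{|x|>\nu t^\theta\})}\le C t^{-\sigma(p)}$ by~\eqref{eq:p.norm.Y} and~\eqref{eq:exterior.estimate.Y}, the difference $\bigl(M(t)t^{1-\alpha}-M_0\log t\bigr)Y(\cdot,t)$ is $o(t^{-\sigma(p)}\log t)$ in that $L^p$ norm. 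Adding the two estimates by the triangle inequality finishes the proof.

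The only nontrivial step is the sharp asymptotic for $\int_0^t(1+s)^{-1}(t-s)^{\alpha-1}\,\mathrm{d}s$; the main obstacle is ensuring that the remainder integral involving $r(s)$ is genuinely $o(t^{\alpha-1}\log t)$ and not merely $O(t^{\alpha-1}\log t)$, which is why the $\varepsilon$--$T$ splitting is necessary and uses only the pointwise decay $r(s)(1+s)=o(1)$ that~\eqref{eq:hypothesis.asymp.sep.variables.1} provides.
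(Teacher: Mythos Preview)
Your approach is essentially the same as the paper's: both decompose $M(t)$ into the main contribution $M_0\int_0^t(1+s)^{-1}(t-s)^{\alpha-1}\,\mathrm{d}s$ and a remainder involving $(1+s)M_f(s)-M_0$, and both handle the remainder via exactly the $\varepsilon$--$T$ split you describe (this is the paper's term ${\rm I}$). The corollary derivation via Theorem~\ref{thm:outer.gamma.ge.1} is also identical.

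The one place where your sketch needs more care is the main term. Writing $(t-s)^{\alpha-1}\simeq t^{\alpha-1}$ on $[0,t/2]$ only gives that $\int_0^{t/2}(1+s)^{-1}(t-s)^{\alpha-1}\,\mathrm{d}s$ is \emph{comparable} to $t^{\alpha-1}\log t$, not asymptotically equal with constant~$1$, which is what you need to produce the leading term $M_0 t^{\alpha-1}\log t$. You must actually show
\[
\int_0^{t/2}(1+s)^{-1}\bigl[(t-s)^{\alpha-1}-t^{\alpha-1}\bigr]\,\mathrm{d}s=o\bigl(t^{\alpha-1}\log t\bigr).
\]
This is easy to fix: on $[0,t/2]$ the Mean Value Theorem gives $0\le (t-s)^{\alpha-1}-t^{\alpha-1}\le C s\,t^{\alpha-2}$, so the integral is $\le C t^{\alpha-2}\int_0^{t/2}\frac{s}{1+s}\,\mathrm{d}s=O(t^{\alpha-1})$. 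Alternatively, split at $\delta t$ instead of $t/2$ and let $\delta\to0$ after $t\to\infty$ (this is precisely how the paper handles its term ${\rm II}$). As written, though, this step is a gap.
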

\noindent\emph{Remarks. } (a) If $M_0\neq0$ and $\gamma=1$, we have the sharp decay rate $\|u(\cdot,t)\|_{L^p(\{|x|>\nu t^{\theta}\})}\simeq t^{-\sigma(p)}\log t$ assuming only that $f$ is asymptotically of separate variables.

\noindent (b) If $f$ satisfies~\eqref{eq:hypothesis.asymp.sep.variables.1} with  $\gamma=1$, the inner limit, $|\xi|\to 0$, of the outer profile is given by
$$
M_0\log t \,Y(x,t)\approx M_0 t^{-\sigma_*}\log t\, \kappa E_{4\beta}(\xi)=t^{-(1+\alpha)}\log t\,M_0 \kappa E_{4\beta}(x),
$$
which coincides with the behavior for \lq\lq outer'' intermediate scales; see Theorem~\ref{teo-intermediate}, case~\eqref{eq:rates.fast.1}.

\medskip

The purpose of our last result is to check that for $\gamma<1$ the inner limit of the outer profile also coincides with the behavior for \lq\lq outer'' intermediate scales, given by Theorem~\ref{teo-intermediate}, case~\eqref{eq:rates.slow},   when $f$ is asymptotically of separate variables.
\begin{prop}\label{prop-coherence}
Let $\gamma<1$. If $f$ satisfies~\eqref{eq:hypothesis.asymp.sep.variables.1} and the hypotheses of Theorem~\ref{thm:outer.general}, then
\[
\int_0^t M_f(s)Y(x,t-s)\,{\rm d}s=\big(1+o(1)\big)t^{-\gamma}M_0 c_{2\beta}E_{2\beta}(x)\quad\mbox{if }|\xi|=o(1)\text{ as }t\to\infty.
\]
\end{prop}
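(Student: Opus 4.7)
The plan is to reduce everything to the identity $\int_0^\infty Y(x,\tau)\,{\rm d}\tau=c_{2\beta}E_{2\beta}(x)$, which follows on the Fourier side from $\widehat{Y}(\omega,\tau)=-|\omega|^{-2\beta}\partial_\tau\widehat{Z}(\omega,\tau)$ together with $\widehat{Z}(\omega,0)=1$ and $\widehat{Z}(\omega,\tau)\to 0$ as $\tau\to\infty$, and which is the same identity underlying the appearance of $c_{2\beta}I_{2\beta}[g]$ in Theorem~\ref{teo-compacts}. Hypothesis~\eqref{eq:hypothesis.asymp.sep.variables.1} provides $M_f(s)(1+s)^\gamma\to M_0$ together with the pointwise bound $|M_f(s)|\le C(1+s)^{-\gamma}$ that comes from~\eqref{eq-hypothesis f}.

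First I would change variables $\tau=t-s$ in the time convolution and split the $\tau$-integral at $t/2$. On $(0,t/2)$ one has $(1+t-\tau)^{-\gamma}=t^{-\gamma}(1+o(1))$ uniformly in $\tau$, and since $t-\tau\ge t/2\to\infty$ also $M_f(t-\tau)(1+t-\tau)^{\gamma}\to M_0$ uniformly, so
$$
\int_0^{t/2}M_f(t-\tau)Y(x,\tau)\,{\rm d}\tau=\bigl(M_0+o(1)\bigr)t^{-\gamma}\int_0^{t/2}Y(x,\tau)\,{\rm d}\tau.
$$
Then I would apply the key identity as $\int_0^{t/2}Y(x,\tau)\,{\rm d}\tau=c_{2\beta}E_{2\beta}(x)-\int_{t/2}^\infty Y(x,\tau)\,{\rm d}\tau$ and estimate the tail via~\eqref{eq:constant.origin}: for $\tau\ge t/2$ the argument $|x|\tau^{-\theta}\le 2^{\theta}|x|t^{-\theta}=o(1)$, hence $Y(x,\tau)\le C|x|^{4\beta-N}\tau^{-(1+\a)}$ and the tail is $O(|x|^{4\beta-N}t^{-\a})$. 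The relation $2\beta\theta=\a$ turns the hypothesis $|x|=o(t^\theta)$ into $|x|^{2\beta}=o(t^\a)$, so the tail is $o(|x|^{2\beta-N})$ and the contribution of $(0,t/2)$ is exactly $(1+o(1))\,t^{-\gamma}M_0c_{2\beta}E_{2\beta}(x)$.

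For the remaining interval $(t/2,t)$ the same inner bound applies eventually in $t$ for every such $\tau$, and the size hypothesis on $M_f$ gives, after changing variables $u=t-\tau$,
$$
\int_{t/2}^t|M_f(t-\tau)|\,Y(x,\tau)\,{\rm d}\tau\le C|x|^{4\beta-N}t^{-(1+\a)}\int_0^{t/2}(1+u)^{-\gamma}\,{\rm d}u\le C|x|^{4\beta-N}t^{-\a-\gamma},
$$
where we used $\gamma<1$ in the last step. Invoking $|x|^{2\beta}=o(t^\a)$ once more, this remainder is $o(t^{-\gamma}|x|^{2\beta-N})$ and can be absorbed into the error.

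The main obstacle I anticipate is verifying that both the tail $\int_{t/2}^\infty Y(x,\tau)\,{\rm d}\tau$ and the near-diagonal piece $\tau\in(t/2,t)$ are genuinely small compared to the target $t^{-\gamma}|x|^{2\beta-N}$; this rests entirely on translating the spatial smallness $|\xi|=|x|t^{-\theta}=o(1)$ into the time smallness $|x|^{2\beta}=o(t^\a)$ through the self-similarity exponent $\theta=\a/(2\beta)$, which is the only place the hypothesis on $|\xi|$ enters.
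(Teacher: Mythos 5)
Your decomposition, your tail estimate via $\int_{t/2}^\infty Y(x,\tau)\,{\rm d}\tau=O(|x|^{4\beta-N}t^{-\alpha})=o(E_{2\beta}(x))$, and your treatment of $\tau\in(t/2,t)$ are all sound and match the paper's handling of its terms ${\rm I}_{22}$ and ${\rm II}$; the key identity $\int_0^\infty Y(x,\tau)\,{\rm d}\tau=c_{2\beta}E_{2\beta}(x)$ is exactly~\eqref{eq:recall.A.eq.c2beta}, which the paper has already established (through the nontrivial identification $A=c_{2\beta}$ via the stationary-solution argument of Section~\ref{sect-compacts}), so you may simply cite it rather than rely on the formal Fourier computation, which would itself need justification (interchanging the time integral with Fourier inversion at a non-integrable symbol).

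There is, however, one step that fails as written: the claim that $(1+t-\tau)^{-\gamma}=t^{-\gamma}\big(1+o(1)\big)$ \emph{uniformly} in $\tau\in(0,t/2)$. For $\gamma\neq0$ this is false: when $\tau$ is comparable to $t/2$ the ratio $(1+t-\tau)^{-\gamma}/t^{-\gamma}$ tends to $2^{\gamma}$, not to $1$, so the replacement of $(1+t-\tau)^{-\gamma}$ by $t^{-\gamma}$ produces an error of size $\simeq t^{-\gamma}$ on a set of times of measure comparable to $t$, and the factor $(M_0+o(1))t^{-\gamma}$ cannot be pulled out of the integral over $(0,t/2)$ by uniformity alone. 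This is precisely why the paper splits the convolution at $\delta t$ with $\delta$ small (there $|(1+t-s)^{-\gamma}-t^{-\gamma}|\le \varepsilon t^{-\gamma}$ for $t$ large and $\delta$ small) and treats the remaining range separately. The repair needs no new ingredient beyond what you already use elsewhere: on $(\delta t,t/2)$ invoke the global bound~\eqref{eq:global.estimate.Y}, $Y(x,\tau)\le C\tau^{-(1+\alpha)}E_{4\beta}(x)\le C_\delta t^{-(1+\alpha)}E_{4\beta}(x)$, so that block contributes at most $C_\delta t^{-\gamma-\alpha}E_{4\beta}(x)=C_\delta\,|\xi|^{2\beta}\,t^{-\gamma}E_{2\beta}(x)=o\big(t^{-\gamma}E_{2\beta}(x)\big)$, the same $|\xi|^{2\beta}$ absorption you use for the tail and for $\tau\in(t/2,t)$. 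With that extra splitting your proof becomes essentially the paper's proof.
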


\noindent\emph{Remarks. } (a) The limit behavior~\eqref{eq:constant.origin} for the profile $G$ yields
\begin{equation}
\label{eq:behavior.Y.origin}
 Y(x,t)=(\kappa+o(1))t^{-(1+\a)}E_{4\beta}(x)\quad\mbox{if }|\xi|\to0.
\end{equation}
Hence, $M(t)t^{1-\alpha}Y(x,t)\approx M(t)t^{-2\alpha}\kappa E_{4\beta}(x)$ as $|\xi|\to0$. Therefore, if $\gamma<1$ the limit profile
in outer regions, $\int_0^t M_f(s)Y(\cdot,t-s)\,{\rm d}s$, does not coincide with $M(t)t^{1-\alpha}Y(x,t)$, in contrast with the case $\gamma\ge 1$.

\noindent (b) Under the assumptions of Proposition~\ref{prop-coherence}, if $M_0\neq0$ and $\delta>0$ is small enough, then
$$
\big|\int_0^t M_f(s)Y(x,t-s)\,{\rm d}s\big|\simeq  t^{-\gamma}E_{2\beta}(x) \quad\text{if }|x|t^{-\theta}<\delta.
$$
Therefore, if $\nu<\delta$, with $\delta>0$ small enough, we have
$$
\begin{aligned}
\Big\|\int_0^t M_f(s)Y(\cdot,t-s)\,{\rm d}s\Big\|_{L^p(\{|x|>\nu t^{\theta}\})}&\ge \Big\|\int_0^t M_f(s)Y(\cdot,t-s)\,{\rm d}s\Big\|_{L^p(\{\delta t^\theta>|x|>\nu t^{\theta}\})}\\
&\simeq t^{-\gamma}\|E_{2\beta}\|_{L^p(\{\delta t^\theta>|x|>\nu t^{\theta}\})}\simeq t^{1-\gamma-\sigma(p)}.
\end{aligned}
$$
We conclude that if $\gamma<1$, under suitable assumptions on $f$,
$$
\Big\|\int_0^t M_f(s)Y(\cdot,t-s)\,{\rm d}s\Big\|_{L^p(\{|x|>\nu t^{\theta}\})}\simeq t^{1-\gamma-\sigma(p)}\quad\text{if }\nu\text{ is small},
$$
and hence we have the sharp rate $\|u(\cdot,t)\|_{L^p(\{|x|>\nu t^{\theta}\})}\simeq t^{1-\gamma-\sigma(p)}$.

\section{Compact sets}\label{sect-compacts}
\setcounter{equation}{0}

The goal of this section is to obtain the large-time behavior in compact sets, Theorem~\ref{teo-compacts}. We start by checking that, under the assumptions of that theorem, the functions giving the large-time behavior are in the desired spaces.
\begin{lema}
\label{lem:u.in.Lploc} {\rm (i)}  Let $g\in L^1(\R^N)$. If $p\ge p_{\rm c}$, assume in addition that $g\in L^q_{\rm loc}(\R^N)$ for some $q\in(q_{\rm c}(p),p]$. Then  $I_{2\beta}[g]\in L^p_{\rm loc}(\R^N)$.

\medskip

\noindent{\rm (ii)} Let $\gamma>1$. Let $f$ satisfy~\eqref{eq-hypothesis f}. If $p\ge p_{\rm c}$, assume in addition~\eqref{eq:assumption.f.p.not.subcritical}. Then $F$ given by~\eqref{eq:def.F} satisfies $I_{4\beta}[F]\in L^p_{\rm loc}(\R^N)$.
\end{lema}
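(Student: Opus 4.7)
The plan in both parts is to localize and split the Riesz-type kernel $E_\mu$ at a scale adapted to the compact set, then apply Young's convolution inequality to the near part; the far part is handled by the pointwise bound on $E_\mu$ away from the origin.

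For part (i), I would fix a compact set $K\subset B_R$ and, for $x\in K$, decompose
$$
I_{2\beta}[g](x)=\int_{B_{2R}} g(y)|x-y|^{2\beta-N}\,{\rm d}y+\int_{\mathbb{R}^N\setminus B_{2R}} g(y)|x-y|^{2\beta-N}\,{\rm d}y.
$$
The far integral is uniformly bounded on $K$ by $R^{2\beta-N}\|g\|_{L^1(\mathbb{R}^N)}$, since $|x-y|\ge R$ there and $2\beta<N$; it therefore contributes an $L^\infty(K)$ piece. The near integral equals $\big((g\chi_{B_{2R}})*(E_{2\beta}\chi_{B_{3R}})\big)(x)$. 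When $p\in[1,p_{\rm c})$, I would take $q=1$, $r=p$ in Young's inequality and use that $E_{2\beta}\chi_{B_{3R}}\in L^p(\mathbb{R}^N)$ exactly when $p<p_{\rm c}$. When $p\ge p_{\rm c}$, the additional hypothesis yields $g\chi_{B_{2R}}\in L^q(\mathbb{R}^N)$; choosing $r$ with $1/q+1/r=1+1/p$, the condition $q>q_{\rm c}(p)=Np/(2\beta p+N)$ translates \emph{precisely} into $r<p_{\rm c}$, so $E_{2\beta}\chi_{B_{3R}}\in L^r(\mathbb{R}^N)$ and Young delivers the near part in $L^p(\mathbb{R}^N)$.

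For part (ii), I would first check that $F$ inherits the hypotheses satisfied by $g$ in part (i). Minkowski's integral inequality combined with~\eqref{eq-hypothesis f} gives
$$
\|F\|_{L^1(\mathbb{R}^N)}\le \int_0^\infty\|f(\cdot,s)\|_{L^1(\mathbb{R}^N)}\,{\rm d}s\le C\int_0^\infty(1+s)^{-\gamma}\,{\rm d}s<\infty,
$$
using $\gamma>1$; the same argument applied with~\eqref{eq:assumption.f.p.not.subcritical} yields $F\in L^q_{\rm loc}(\mathbb{R}^N)$ when $p\ge p_{\rm c}$. I would then repeat the splitting/Young argument of part~(i) verbatim, now with $E_{4\beta}$ in place of $E_{2\beta}$. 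The only change is that the threshold for local $L^r$-integrability of $E_{4\beta}$ becomes $r<p_*=N/(N-4\beta)$, which is strictly weaker than $r<p_{\rm c}$; since $p_{\rm c}<p_*$, the $r$ dictated by $q>q_{\rm c}(p)$ already lies below $p_{\rm c}$, hence a fortiori below $p_*$, and Young's inequality applies with room to spare.

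I do not expect any real obstacle: once $q_{\rm c}(p)$ is recognized as the exponent that makes Young's relation $1/q+1/r=1+1/p$ compatible with $r<p_{\rm c}$, the proof reduces to careful bookkeeping of the Young exponents together with the elementary pointwise bound $|x-y|^{2\beta-N}\le R^{2\beta-N}$ outside $B_{2R}$.
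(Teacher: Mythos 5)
Your proof is correct and follows essentially the same route as the paper: split the potential into the near singular part, handled by Young's inequality with exponents $1+\frac1p=\frac1q+\frac1r$ so that $q>q_{\rm c}(p)$ forces $r<p_{\rm c}$ (hence local $L^r$-integrability of $E_{2\beta}$, and a fortiori of $E_{4\beta}$ since $p_{\rm c}<p_*$), and a far part controlled by the $L^1$ norm of $g$, respectively of $F$, with the kernel bounded away from its singularity. The only cosmetic differences are the splitting scale ($R$ adapted to $K$ instead of the fixed radius $1$) and that in part (ii) you integrate $f$ in time first via Minkowski and then invoke part (i), whereas the paper performs the same Minkowski/Young estimate directly on the space-time integral.
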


\begin{proof} (i) We make the estimate $|I_{2\beta}[g]|\le {\rm I}+{\rm II}$, where
\[
{\rm I}(x)=\int_{|y|<1}|g(x-y)|E_{2\beta}(y)\,{\rm d}y,\qquad {\rm II}(x)=\int_{|y|>1}|g(x-y)|\,{\rm d}y.
\]
In order to estimate ${\rm I}$ we take
\begin{equation}
\label{eq:choice.q}
q=1\text{ if }p\in[1,p_{\rm c}),\quad q\in (q_{\rm c}(p),p] \text{ as in the hypothesis if }p\ge p_{\rm c},\qquad
1+\frac1p=\frac1q+\frac1r.
\end{equation}
Notice that $r\in [1,p_{\rm c})$.
Then, using the hypotheses on $g$, we have
\[
\|{\rm I}\|_{L^p(K)}\le  \|g\|_{L^q(K+B_1)}\|E_{2\beta}\|_{L^r(B_1)}<\infty
\]
On the other hand, ${\rm II}(x)\le\|g\|_{L^1(\R^N)}$, hence the result.

\noindent (ii) Splitting the spatial integral as before, we have $|I_{4\beta}[F]|\le {\rm I}+{\rm II}$, where
$$
{\rm I}(x)=\int_0^\infty\int_{|y|<1}|f(x-y)|E_{4\beta}(y)\,{\rm d}y{\rm d}s,\qquad
{\rm II}(x)=\int_0^\infty\int_{|y|>1}|f(x-y)|\,{\rm d}y{\rm d}s.
$$
Taking  $q$ and $r$ as in~\eqref{eq:choice.q}, and using~\eqref{eq-hypothesis f}, and also~\eqref{eq:assumption.f.p.not.subcritical} if $p\ge p_{\rm c}$,
\[
\|{\rm I}\|_{L^p(K)}\le \|E_{4\beta}\|_{L^r(B_1)}\int_0^\infty \|f(\cdot,s)\|_{L^q(K+B_{1})}\,{\rm d}s
\le C\int_0^\infty (1+s)^{-\gamma}\,{\rm d}s<\infty,
\]
since $r\in [1,p_*)$, and $\gamma>1$. On the other hand, using the size hypothesis~\eqref{eq-hypothesis f},
${\rm II}(x)\le M_\infty$, whence the result.
\end{proof}
We now proceed to the proof of Theorem~\ref{teo-compacts}. As a first step we prove the result substituting the constant $c_{2\beta}$  in the definition~\eqref{eq:profile.L} of $\mathcal{L}$ by the constant
\begin{equation}
\label{eq:definition.A}
\displaystyle A:=\frac1{\theta\omega_N}\int_{\mathbb{R}^N} \frac{G(\xi)}{|\xi|^{2\beta}}\,{\rm d}\xi=\frac1\theta\int_0^\infty\rho^{N-1-2\beta}\tilde G(\rho)\,{\rm d}\rho,
\end{equation}
where $\omega_N$ denotes de measure of $\mathbb{S}^{N-1}:=\{x\in\mathbb{R}^N: |x|=1\}$, the unit sphere in $\mathbb{R}^N$, and $\tilde G(\rho)=G(\rho \zeta)$ for any $\zeta\in\mathbb{S}^{N-1}$ (remember that $G$ is radially symmetric).  We observe that the estimates~\eqref{eq:interior.estimate.profile}--\eqref{eq:exterior.estimates.profile} on the profile $G$ guarantee that $A$ is a finite number.

\begin{prop}
\label{prop:thm.with.A}
Under the assumptions of Theorem~\ref{teo-compacts}, the convergence result~\eqref{eq:teo-compacts} is true with the constant $c_{2\beta}$ in the definition~\eqref{eq:profile.L} of $\mathcal{L}$ substituted by the constant $A$ given in~\eqref{eq:definition.A}.
\end{prop}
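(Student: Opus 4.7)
The first task is to establish two identities for the kernel $Y$ that encode the two possible limit profiles appearing in~\eqref{eq:profile.L}. Using the self-similar form~\eqref{eq:Y.selfsimilar} and the substitution $\rho=|x-y|\tau^{-\theta}$, together with the arithmetic identities $(\sigma_*-1)/\theta = N-2\beta$ and $\sigma_* + \theta(4\beta - N) = 1+\alpha$, I obtain
$$\int_0^\infty Y(x-y,\tau)\,{\rm d}\tau = A\,E_{2\beta}(x-y), \qquad \lim_{t\to\infty}t^{1+\alpha}Y(x-y,t) = \kappa\, E_{4\beta}(x-y),$$
where the first integral converges thanks to~\eqref{eq:interior.estimate.profile}--\eqref{eq:exterior.estimates.profile} together with the hypothesis $N>4\beta$, and the second limit follows from~\eqref{eq:constant.origin}, uniformly for $|x-y|$ in compact subsets of $(0,\infty)$. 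These two identities explain why exactly the constants $A$ and $\kappa$ enter the limit profile.

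After the change of variables $\tau = t-s$ in~\eqref{eq-formula}, I split the time integral at $\tau=t/2$, writing $u=u_{\mathrm{I}}+u_{\mathrm{II}}$. The piece $u_{\mathrm{I}}$ over $\tau\in[0,t/2]$, where $s$ is large, should produce the $AI_{2\beta}[g]$ contribution: when $\gamma\le 1+\alpha$ and~\eqref{eq:hypothesis.asymp.sep.variables.1} holds, I substitute $f(y,t-\tau)=(1+t-\tau)^{-\gamma}g(y)+r(y,t-\tau)$; since $(1+t-\tau)^{-\gamma}=t^{-\gamma}(1+o(1))$ uniformly on $[0,t/2]$, dominated convergence together with the first identity above yields $u_{\mathrm{I}}(x,t)=t^{-\gamma}A\,I_{2\beta}[g](x)(1+o(1))$. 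The piece $u_{\mathrm{II}}$ over $\tau\in[t/2,t]$, where $s$ is small, should produce the $\kappa I_{4\beta}[F]$ contribution: when $\gamma>1$ the mass $F(y)=\int_0^\infty f(y,s)\,{\rm d}s$ is well defined and, by the inner asymptotics, $Y(x-y,\tau)=\kappa\tau^{-(1+\alpha)}E_{4\beta}(x-y)(1+o(1))$ uniformly for $\tau\in[t/2,t]$ and $|x-y|$ bounded, so integrating against $f(y,t-\tau)$ gives $u_{\mathrm{II}}(x,t)=\kappa t^{-(1+\alpha)}I_{4\beta}[F](x)(1+o(1))$. Comparing $t^{-\gamma}$ with $t^{-(1+\alpha)}$ recovers the three cases of~\eqref{eq:profile.L}, with both contributions surviving precisely when $\gamma=1+\alpha$.

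To lift pointwise convergence to convergence in $L^p(K)$, I perform, following the template of Lemma~\ref{lem:u.in.Lploc}, a spatial decomposition into $|x-y|<1$ and $|x-y|>1$. On the near region the local singularities of $E_{2\beta}$, $E_{4\beta}$ and the rescaled profile $G$ are absorbed via H\"older's inequality with the exponent split $1+1/p=1/q+1/r$, taking $q=1$ when $p\in[1,p_{\rm c})$ and $q\in(q_{\rm c}(p),p]$ provided by~\eqref{eq:assumption.f.p.not.subcritical} or~\eqref{eq:hypothesis.asymp.sep.variables.2} when $p\ge p_{\rm c}$, so that the conjugate exponent satisfies $r\in[1,p_{\rm c})$ and the Riesz-type kernels remain in $L^r$ locally. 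On the far region the size hypothesis~\eqref{eq-hypothesis f} suffices. The error term produced by the separate-variables approximation, of the form $\int_0^t\!\int Y(x-y,\tau)r(y,t-\tau)\,{\rm d}y\,{\rm d}\tau$ with $\|(1+s)^\gamma r(\cdot,s)\|_{L^1}\to 0$, is handled by the same bookkeeping, combined with the uniform smallness of $r(\cdot,s)$ as $s\to\infty$; analogous estimates control the tail $\int_{t/2}^\infty f(y,s)\,{\rm d}s$ arising in the computation of $F$.

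The main technical obstacle I anticipate is the uniform $L^p(K)$ control in the critical regime $p\ge p_{\rm c}$, where neither $E_{2\beta}$ nor the leading singularity of $G$ lies locally in $L^p$, so the H\"older decomposition must consistently shift the Riesz-type singularity onto the $L^r$ factor with $r<p_{\rm c}$; this forces a carefully coordinated use of every piece of the hypotheses on $f$, $g$ and $F$. A secondary technical point is to render~\eqref{eq:constant.origin} quantitative in the form $G(\xi)=\kappa|\xi|^{4\beta-N}(1+o(1))$ as $\xi\to 0$, so as to justify the approximation of $Y$ by $\kappa\tau^{-(1+\alpha)}E_{4\beta}$ uniformly on $\tau\in[t/2,t]$ against integrable-in-space data.
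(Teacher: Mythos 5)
Your strategy is the same as the paper's: the two kernel identities you isolate are exactly the mechanism the paper uses (your $\int_0^\infty Y(z,\tau)\,{\rm d}\tau=A\,E_{2\beta}(z)$ is~\eqref{eq:definition.A} together with~\eqref{eq:convergence.to.A}, and your $t^{1+\alpha}Y(z,t)\to\kappa E_{4\beta}(z)$ is~\eqref{eq:constant.origin}), the split of the time integral into ``kernel time small / late values of $f$'' versus ``kernel time large / early values of $f$'' is the paper's ${\rm II}_1$ versus ${\rm II}_3$, and the H\"older bookkeeping with $q,r$ as in~\eqref{eq:choice.q} is identical. So this is not a different route; the differences are in how the two $(1+o(1))$ claims are justified, and that is where your sketch has real soft spots.

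First, the assertion that $(1+t-\tau)^{-\gamma}=t^{-\gamma}(1+o(1))$ \emph{uniformly} on $\tau\in[0,t/2]$ is false: on that range the ratio only stays between constants ($t-\tau$ can be as small as $t/2$). The paper repairs exactly this by splitting at $\delta t$ (terms ${\rm II}_{12}$ and ${\rm II}_2$): on $(0,\delta t)$ the replacement costs $\varepsilon t^{-\gamma}$, and the chunk $(\delta t,t/2)$ is separately shown to be $O(t^{-\gamma-\alpha})=o(t^{-\min\{\gamma,1+\alpha\}})$ because there the kernel contributes $s^{-(1+\alpha)}$ with $s\ge\delta t$. Second, in $u_{\rm II}$ the uniform statement $Y(x-y,\tau)=\kappa\tau^{-(1+\alpha)}E_{4\beta}(x-y)(1+o(1))$ for $\tau\in[t/2,t]$, $|x-y|\le L$, is correct, but it does not by itself give $u_{\rm II}=\kappa t^{-(1+\alpha)}I_{4\beta}[F](1+o(1))$: you must still replace $\tau^{-(1+\alpha)}$ by $t^{-(1+\alpha)}$, and on $[t/2,t]$ these differ by a bounded factor, not by $1+o(1)$. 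The paper's extra split at $t-\sqrt t$ (terms ${\rm II}_{32}$--${\rm II}_{34}$) is what makes this work: since $\gamma\ge1+\alpha>1$, the $f$-mass carried by $s=t-\tau\in(\sqrt t,t/2)$ vanishes, and for $s\le\sqrt t$ the Mean Value Theorem gives the genuine $o(t^{-(1+\alpha)})$ correction. Your appeal to dominated convergence can in principle absorb both repairs for pointwise or $L^p(K)$ convergence with $p<\infty$ (dominate by $C\,I_{2\beta}[|g|]$, resp.\ $C\,I_{4\beta}[\int_0^\infty|f|]$, which Lemma~\ref{lem:u.in.Lploc} places in $L^p_{\rm loc}$), but the proposition includes $p=\infty$, where pointwise convergence plus an integrable dominant does not yield $L^\infty(K)$ convergence; there the quantitative, uniform-in-$x$ estimates obtained from the extra time splittings (and from taking the outer spatial cutoff $L$ large so the far contribution is $\le\varepsilon t^{-\min\{\gamma,1+\alpha\}}$) are not optional. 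With those two splittings inserted and the DCT steps made quantitative, your argument becomes the paper's proof.
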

\begin{proof}
As we will see, the value of $f$ at points that are far away from $x$ will not contribute to the behavior of the solution at that point. Hence, we estimate the error as
$$
\begin{aligned}
|t^{\min\{\gamma,1+\alpha\}}&u(x,t)-\mathcal{L}(x)|\le t^{\min\{\gamma,1+\alpha\}}( {\rm I}(x,t)+{\rm II}(x,t)),\quad\text{where}\\
{\rm I}(x,t)&=\int_0^t\int_{|y|>L}|f(x-y,t-s)| Y(y,s)\,{\rm d}y{\rm d}s,\\
{\rm II}(x,t)&=\Big|\int_0^t\int_{|y|<L}f(x-y,t-s)Y(y,s)\,{\rm d}y{\rm d}s
-t^{-\min\{\gamma,1+\alpha\}}\mathcal{L}(x)\Big|,
\end{aligned}
$$
with $L>0$ large to be chosen later.

In order to estimate ${\rm I}$, for every $t>1$ we split it as ${\rm I}={\rm I}_{1}+{\rm I}_{2}$, where
\[
\begin{aligned}
{\rm I}_{1}(x,t)&=\int_0^1\int_{|y|>L}|f(x-y,t-s)|Y(y,s)\,{\rm d}y{\rm d}s,\\
{\rm I}_{2}(x,t)&=\int_1^t\int_{|y|>L}|f(x-y,t-s)|Y(y,s)\,{\rm d}y{\rm d}s.
\end{aligned}
\]
We start with ${\rm I}_1$. Using the exterior bound~\eqref{eq:exterior.estimate.Y} and the decay assumption~\eqref{eq-hypothesis f},
\[\begin{aligned}
{\rm I}_{1}(x,t)&\le C\int_0^1\int_{|y|>L}|f(x-y,t-s)|\frac{s^{2\a-1}}{|y|^{N+2\beta}}\,{\rm d}y{\rm d}s\le \frac{Ct^{-\gamma}}{L^{N+2\beta}}\int_0^1s^{2\a-1}\,{\rm d}s<\ep t^{-\gamma}
\end{aligned}\]
for all $t>1$ if $L>0$ is large enough.

In order to estimate ${\rm I}_2$ we use now the global estimate~\eqref{eq:global.estimate.Y}, and again the decay assumption~\eqref{eq-hypothesis f} to obtain, for all $t>1$,
\[\begin{aligned}
{\rm I}_{2}(x,t)&\le C\int_1^t\int_{|y|>L}|f(x-y,t-s)|s^{-(1+\a)}E_{4\beta}(y)
\,{\rm d}y{\rm d}s\\
&\le \frac{Ct^{-\gamma}}{L^{N-4\beta}}\int_1^{t/2}s^{-(1+\alpha)}\,{\rm d}s+ \frac{Ct^{-(1+\alpha)}}{L^{N-4\beta}}\int_{t/2}^t(1+t-s)^{-\gamma}\,{\rm d}s\\
&\le \frac{Ct^{-\gamma}}{L^{N-4\beta}}+ \frac{Ct^{-(1+\alpha)}}{L^{N-4\beta}}\int_{t/2}^t(1+t-s)^{-\gamma}\,{\rm d}s.
\end{aligned}\]
Since
\begin{equation}
\label{eq:est.integral.t}
\int_{\lambda t}^t(1+t-s)^{-\gamma}\,{\rm d}s=\int_{0}^{(1-\lambda)t}(1+s)^{-\gamma}\,{\rm d}s\le C_\lambda
\begin{cases}
t^{1-\gamma},&\gamma<1,\\
\log t,&\gamma=1,\\
1,&\gamma>1,
\end{cases}
\end{equation}
for any $\lambda\in(0,1)$, then  ${\rm I}(x,t)\le C\ep t^{-\min\{\gamma,1+\alpha\}}$ for every $t>1$ if $L>0$ is large enough.

It will turn out that the values of $f$ at times close to $t$ only contribute to the asymptotic behavior of $u$ if $\gamma\le 1+\alpha$, while its values at times in the interval $(0,t/2)$ only matter if $\gamma\ge 1+\alpha$ (notice, however, that this interval expands to the whole $\mathbb{R}_+$ as $t\to\infty$).  Therefore, we make the estimate ${\rm II}\le{\rm II}_{1}+{\rm II}_{2}+{\rm II}_{3}$,  where
\[\begin{aligned}
	{\rm II}_{1}(x,t)&=\begin{cases}\displaystyle
\Big|\int_0^{\delta t}\int_{|y|<L}f(x-y,t-s)Y(y,s)\,{\rm d}y{\rm d}s- t^{-\gamma}AI_{2\beta}[g](x)\Big|,&\gamma\le 1+\alpha,\\
\displaystyle\int_0^{\delta t}\int_{|y|<L}|f(x-y,t-s)|Y(y,s)\,{\rm d}y{\rm d}s,&\gamma>1+\alpha,
\end{cases}
\\
{\rm II}_{2}(x,t)&=\int_{\delta t}^{t/2}\int_{|y|<L}|f(x-y,t-s)|Y(y,s)\,{\rm d}y{\rm d}s,\\
{\rm II}_{3}(x,t)&=\begin{cases}
\displaystyle
\int_{t/2}^t\int_{|y|<L}|f(x-y,t-s)|Y(y,s)\,{\rm d}y{\rm d}s,&\gamma<1+\alpha,\\
\displaystyle\Big|\int_{t/2}^t\int_{|y|<L}f(x-y,t-s)Y(y,s)\,{\rm d}y{\rm d}s-t^{-(1+\alpha)}\kappa I_{4\beta}[F](x)\Big|,&\gamma\ge1+\alpha,
\end{cases}
\end{aligned}
\]
for some  small value $\delta\in(0,1/2)$ to be chosen later.

We start by estimating ${\rm II}_1$ when $\gamma\le1+\a$. We have ${\rm II}_1\le \sum_{i=1}^4 {\rm II}_{1i}$, where
$$
\begin{aligned}
{\rm II}_{11}(x,t)&=
\int_0^{\delta t}\int_{|y|<L}|f(x-y,t-s)(1+t-s)^\gamma-g(x-y)|(1+t-s)^{-\gamma}Y(y,s) \,{\rm d}y{\rm d}s,\\
{\rm II}_{12}(x,t)&= \int_0^{\delta t}\big|(1+t-s)^{-\gamma}-t^{-\gamma}\big|\int_{|y|<L}|g(x-y)|Y(y,s)\,{\rm d}y{\rm d}s,\\
{\rm II}_{13}(x,t)&= t^{-\gamma}\int_{|y|<L}|g(x-y)|E_{2\beta}(y)\Big|\int_0^{\delta t}\frac{Y(y,s)}{E_{2\beta}(y)}\,{\rm d}s- A\Big|\,{\rm d}y,\\
{\rm II}_{14}(x,t)&=t^{-\gamma}A\int_{|y|>L}|g(x-y)|E_{2\beta}(y)\,{\rm d}y.
\end{aligned}
$$

We take  $q$ and $r$ as in~\eqref{eq:choice.q}. Using the $L^r$ norm of the kernel~\eqref{eq:p.norm.Y} if $s<1$, and the global estimate~\eqref{eq:global.estimate.Y}  when $s>1$,  together with the size assumption~\eqref{eq:hypothesis.asymp.sep.variables.1} on $f$ if $p$ is subcritical or~\eqref{eq:hypothesis.asymp.sep.variables.2} otherwise, for all $t>1/\delta$ we have
$$
\begin{aligned}
\|{\rm II}_{11}(\cdot,t)\|_{L^p(K)}&\le Ct^{-\gamma} \int_0^{1}\|f(\cdot,t-s)(1+t-s)^\gamma-g\|_{L^q(K+B_L)}\|Y(\cdot,s)\|_{L^r(B_L)}\,{\rm d}s\\
&\quad+Ct^{-\gamma}\int_1^{\delta t}\|f(\cdot,t-s)(1+t-s)^\gamma-g\|_{L^q(K+B_L)}s^{-(1+\alpha)}\|E_{4\beta}\|_{L^r(B_L)}\,{\rm d}s\\
&\le C\varepsilon t^{-\gamma}\Big(\int_0^{1}s^{-\sigma(r)}\,{\rm d}s+\int_1^{\delta t}s^{-(1+\alpha)}\,{\rm d}s\Big)\le C\varepsilon t^{-\gamma}.
\end{aligned}
$$

Given $\varepsilon>0$, there exists a small constant $\delta=\delta(\varepsilon)>0$ and a time $T_\varepsilon$ such that
\[
|(1+t-s)^{-\gamma}-t^{-\gamma}|<\ep t^{-\gamma}\quad\text{if }0<s<\delta t\text{ and }t\ge T_\varepsilon.
\]
Therefore, taking $q$ and $r$ as in~\eqref{eq:choice.q},
$$
\begin{aligned}
\|{\rm II}_{12}(\cdot,t)\|_{L^p(K)}&\le \varepsilon t^{-\gamma} \int_0^{1}\|g\|_{L^q(K+B_L)}\|Y(\cdot,s)\|_{L^r(B_L)}\,{\rm d}s\\
&\quad+\varepsilon t^{-\gamma} \int_1^{\delta t}\|g\|_{L^q(K+B_L)}s^{-(1+\alpha)}\|E_{4\beta}\|_{L^r(B_L)}\,{\rm d}s\le C\varepsilon t^{-\gamma}.
\end{aligned}
$$

As for ${\rm II}_{13}$, making the change of variables $\rho=|y|s^{-\theta}$ we get
\begin{equation}
\label{eq:convergence.to.A}
\int_0^{\delta t}\frac{Y(y,s)}{E_{2\beta}(y)}\,{\rm d}s=\frac1\theta\int_{|y|/(\delta  t)^\theta}^{\infty}\rho^{N-1-2\beta} G\Big(\frac{y}{|y|}\rho\Big)\,{\rm d}\rho.
\end{equation}
Therefore, due to the definition~\eqref{eq:definition.A} of $A$, for any fixed $L>0$,
\[
\Big|\int_0^{\delta t}\frac{Y(y,s)}{E_{2\beta}(y)}\,{\rm d}s- A\Big|=o(1) \quad\text{uniformly in }|y|<L.
\]
Hence, taking $q$ and $r$ as before,
$$
\|{\rm II}_{13}(\cdot,t)\|_{L^p(K)}\le o(t^{-\gamma}) \|g\|_{L^q(K+B_L)}\|E_{2\beta}\|_{L^r(B_L)}=o(t^{-\gamma}).
$$

We finally observe that
$$
{\rm II}_{14}(x,t)\le \frac{A\|g\|_{L^1(\mathbb{R}^N)}}{L^{N-2\beta}}t^{-\gamma}\le \varepsilon t^{-\gamma}
$$
if $L>0$ is large enough.

If $\gamma>1+\a$ the estimate for ${\rm II}_1$ is easier.
Let $q$ and $r$ be as above. Using the $L^p$ norm of the kernel~\eqref{eq:p.norm.Y} if $s<1$ and the global estimate~\eqref{eq:global.estimate.Y} when $s>1$, for all $t>1/\delta$ we have, thanks to the assumptions on the size of $f$,
$$
\begin{aligned}
\|{\rm II}_{1}(\cdot,t)\|_{L^p(K)}&\le \int_0^{1}\|f(\cdot,t-s)\|_{L^q(K+B_L)}\|Y(\cdot,s)\|_{L^r(B_L)}\,{\rm d}s\\
&\quad+\int_1^{\delta t}\|f(\cdot,t-s)\|_{L^q(K+B_L)}s^{-(1+\alpha)}\|E_{4\beta}\|_{L^r(B_L)}\,{\rm d}s\\
&\le C \int_0^{1}(1+t-s)^{-\gamma}s^{-\sigma(r)}\,{\rm d}s+\int_1^{\delta t}(1+t-s)^{-\gamma}s^{-(1+\alpha)}\,{\rm d}s\\
&\le C t^{-\gamma}\Big(\int_0^{1}s^{-\sigma(r)}\,{\rm d}s+\int_1^{\delta t}s^{-(1+\alpha)}\,{\rm d}s\Big)\le Ct^{-\gamma}.\\
\end{aligned}
$$

Now we turn our attention to  ${\rm II}_{2}$. Taking $q$ and $r$ as before,
$$
\begin{aligned}
\|{\rm II}_{2}(\cdot,t)\|_{L^p(K)}&\le\int_{\delta t}^{t/2}\|f(\cdot,t-s)\|_{L^q(K+B_L)}s^{-(1+\alpha)}\|E_{4\beta}\|_{L^r(B_L)}\,{\rm d}s\\
&\le C \int_{\delta t}^{t/2}(1+t-s)^{-\gamma}s^{-(1+\alpha)}\,{\rm d}s\le Ct^{-\gamma-\alpha}=o(t^{-\min\{\gamma,1+\alpha\}}).
\end{aligned}
$$

Finally, we turn our attention to ${\rm II}_{3}$. We start with the case $\gamma<1+\alpha$. Using the global estimate~\eqref{eq:global.estimate.Y} and then the size assumptions on $f$ we get, with $q$ and $r$ as above,
\[
\begin{aligned}
\|{\rm II}_{3}(\cdot,t)\|_{L^p(K)}
&\le\int_{t/2}^t\|f(\cdot,t-s)\|_{L^q(K+B_L)}s^{-(1+\alpha)}\|E_{4\beta}\|_{L^r(B_L)}\\
&\le Ct^{-(1+\alpha)}\int_{t/2}^t (1+t-s)^{-\gamma}\,{\rm d}s,
\end{aligned}
\]
from where we get, using~\eqref{eq:est.integral.t}, that
$\|{\rm II}_{3}(\cdot,t)\|_{L^p(K)}=t^{-\min\{\gamma,1+\alpha\}}o(1)$ in this range, as desired.

When $\gamma\ge 1+\alpha$ the estimate of ${\rm II}_3$ is more involved. We have ${\rm II}_3\le \sum_{i=1}^5 {\rm II}_{3i}$, where
$$
\begin{aligned}
{\rm II}_{31}(x,t)&=\int_{t/2}^t\int_{|y|<L}|f(x-y,t-s)|\big|Y(y,s)-\kappa E_{4\beta}(y)s^{-(1+\a)}\big|\,{\rm d}y{\rm d}s\\
{\rm II}_{32}(x,t)&=\kappa\int_{t/2}^{t-\sqrt{t}}\int_{|y|<L}|f(x-y,t-s)| E_{4\beta}(y)\big(s^{-(1+\a)}-t^{-(1+\a)}\big)\,{\rm d}y{\rm d}s\\
{\rm II}_{33}(x,t)&=\kappa\int_{t-\sqrt{t}}^t\int_{|y|<L}|f(x-y,t-s)| E_{4\beta}(y)\big(s^{-(1+\a)}-t^{-(1+\a)}\big)\,{\rm d}y{\rm d}s\\
{\rm II}_{34}(x,t)&=\kappa t^{-(1+\a)}\int_{|y|<L}E_{4\beta}(y)\Big|\int_{t/2}^tf(x-y,t-s)\,{\rm d}s-\int_0^\infty f(x-y,s)\,{\rm d}s\Big|\,{\rm d}y\\
{\rm II}_{35}(x,t)&=\kappa t^{-(1+\a)}\int_0^\infty \int_{|y|>L}|f(x-y,s)|E_{4\beta}(y)\,{\rm d}y{\rm d}s
\end{aligned}
$$

We first observe that
$$
|Y(y,s)-\kappa E_{4\beta}(y)s^{-(1+\a)}\big|=s^{-(1+\alpha)}E_{4\beta}(y) \Big|\frac{G(ys^{-\theta})}{E_{4\beta}(ys^{-\theta})}-\kappa\Big|.
$$
Therefore, thanks to~\eqref{eq:constant.origin}, given $\varepsilon>0$ and $L>0$ there is a time $T=T(\varepsilon,L)$ such that
$$
|Y(y,s)-\kappa E_{4\beta}(y)s^{-(1+\a)}\big|<\varepsilon s^{-(1+\alpha)}E_{4\beta}(y)\quad \text{if }|y|<L,\ t\ge T.
$$
Hence, if $t\ge T$ we have, for $q$ and $r$ chosen as above, and using~\eqref{eq:est.integral.t} and the size assumptions on $f$,
\[
\|{\rm II}_{31}(\cdot,t)\|_{L^p(K)}\le C\ep t^{-(1+\a)}\int_{t/2}^t\|f(\cdot,t-s)\|_{L^q(K+B_L)}\|E_{4\beta}\|_{L^r(B_L)}\,{\rm d}s\le C\varepsilon t^{-(1+\alpha)},
\]
so that $\|{\rm II}_{31}(\cdot,t)\|_{L^p(K)}\le C\varepsilon t^{-\min\{\gamma,(1+\alpha)\}}$, as desired.

On the other hand, with $q$ and $r$ as always, using the size assumptions on $f$,
\[\begin{aligned}
	\|{\rm II}_{32}(\cdot,t)\|_{L^p(K)}&\le C t^{-(1+\a)}\int_{t/2}^{t-\sqrt{t}}\|f(\cdot,t-s)\|_{L^q(K+B_L)}\|E_{4\beta}\|_{L^r(B_L)}\,{\rm d}s\\
	&\le C t^{-(1+\a)}\int_{t/2}^{t-\sqrt t}(1+t-s)^{-\gamma}\,{\rm d}s\le C t^{-(1+\a)}t^{\frac{1-\gamma}2}=o(t^{-(1+\a)}).
\end{aligned}\]

By the Mean Value Theorem, $0\le s^{-(1+\a)}-t^{-(1+\a)}\le (1+\alpha) s^{-(2+\alpha)}(t-s)$ if $s<t$. Therefore,
\[
\begin{aligned}
\|{\rm II}_{33}(\cdot,t)\|_{L^p(K)}&\le C t^{-(2+\a)}\sqrt t \int_{t-\sqrt t}^t \|f(\cdot,t-s)\|_{L^q(K+B_L)}\|E_{4\beta}\|_{L^r(B_L)}\,{\rm d}s\\
	&\le C t^{-(2+\a)}\sqrt t\int_0^{\sqrt t}(1+s)^{-\gamma}\,{\rm d}s\le C t^{-(1+\a)}t^{-1/2}=o(t^{-(1+\a)}).
\end{aligned}
\]

Now we notice that
$$
\Big|\int_{t/2}^tf(x-y,t-s)\,{\rm d}s-\int_0^\infty f(x-y,s)\,{\rm d}s\Big|=\Big|\int_{t/2}^\infty f(x-y,s)\,{\rm d}s\Big|.
$$
Therefore, for $t$ large enough,
\[
\begin{aligned}
\|{\rm II}_{34}(\cdot,t)\|_{L^p(K)}&\le Ct^{-(1+\alpha)}\int_{t/2}^\infty \|f(\cdot,s)\|_{L^q(K+B_L)}\|E_{4\beta}\|_{L^r(B_L)}\,{\rm d}s\\
&\le  Ct^{-(1+\alpha)}\int_{t/2}^\infty(1+s)^{-\gamma}\,{\rm d}s\le Ct^{-(\gamma+\alpha)}.
\end{aligned}\]

Finally,
\[
{\rm II}_{35}(x,t) \le \frac{Ct^{-(1+\alpha)}}{L^{N-4\beta}}\int_0^\infty\int_{\mathbb{R}^N} |f(y,s)|\,{\rm d}y{\rm d}s<\ep t^{-(1+\alpha)}
\]
if $L$ is large enough, and hence $\|{\rm II}_{35}(\cdot,t)\|_{L^p(K)}\le C\varepsilon t^{-(1+\alpha)}$.
\end{proof}

The proof of Theorem~\ref{teo-compacts} will then be complete if we are able to show that $A=c_{2\beta}$, a somewhat surprising result that is interesting on its own. The idea to prove this fact is to consider a particular forcing term for which the computations are \lq\lq explicit'', namely a stationary one, $g\in C^\infty_{\rm c}(\mathbb{R}^N)$, with $g$ nonnegative. We expect the solution $u$ of~\eqref{eq-f} with such a right-hand side to resemble for large times the stationary solution $S:=c_{2\beta}I_{2\beta}[g]$. Hence, we will study the difference, $U=S-u$. This function is a solution (in a generalized sense) to~\eqref{eq:Cauchy}, given by the formula $U(\cdot,t)=Z(\cdot,t)*S$. To check this last assert it is enough to observe that $S$ is a bounded classical solution to~\eqref{eq:fully.nl} with $f(\cdot,t)=g$ for all $t>0$ and $u_0=S$. But bounded classical solutions to~\eqref{eq:fully.nl} are unique, and represented by
$$
u(x,t)=\int_{\mathbb{R}^N}Z(x-y,t)u_0(y)\,{\rm d}y+\int_0^t\int_{\mathbb{R}^N}Y(x-y,t-s)f(y,s)\,{\rm d}y{\rm d}s,
$$
hence generalized solutions; see~\cite{Eidelman-Kochubei-2004,Kemppainen-Siljander-Zacher-2017,Kochubei-1990}.

Our first aim is to prove that $U$ vanishes asymptotically, so that $u$ indeed resembles $S$.
\begin{prop}
\label{prop:datum.newtonian.potential}
Let $U$ be the generalized solution to~\eqref{eq:Cauchy} with initial datum $U(\cdot,0)=c_{2\beta}I_{2\beta}[g]$ for some nonnegative $g\in C_{\rm c}^\infty(\mathbb{R}^N)$. Then, $\|U(\cdot,t)\|_{L^\infty(\mathbb{R}^N)}=o(1)$ as $t\to\infty$.
\end{prop}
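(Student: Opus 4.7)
The plan is to exploit the self-similar form $Z(x,t) = t^{-N\theta} F(xt^{-\theta})$, the fact that $\|Z(\cdot,t)\|_{L^1(\mathbb{R}^N)} = 1$, and the decay at infinity of the initial datum $S := c_{2\beta} I_{2\beta}[g]$. Since $U(\cdot,t) = Z(\cdot,t) \ast S$, the function $U$ is the convolution of a spreading probability density with a bounded function vanishing at infinity, which will force uniform decay.

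First I would verify the two required properties of $S$. Since $g \in C_c^\infty(\mathbb{R}^N)$, the local singularity $|x-y|^{2\beta - N}$ of the Riesz kernel is integrable (because $2\beta > 0$) and the integration is over a compact set, yielding $\|S\|_{L^\infty(\mathbb{R}^N)} < \infty$. For the decay at infinity I would invoke Theorem~\ref{thm:behavior.Riesz.potential} from the Appendix, to obtain $S(x) = (1+o(1))\, c_{2\beta} M_0\, |x|^{2\beta - N}$ as $|x| \to \infty$ with $M_0 = \int g$; this tends to $0$ since $N > 4\beta > 2\beta$.

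Given $\varepsilon > 0$ I would then pick $R > 0$ so that $|S(z)| < \varepsilon$ for $|z| > R$. Splitting $U(x,t) = \int Z(x-y,t) S(y)\, dy$ according to whether $|x - y|$ is less than or greater than $R$, and using $\|Z(\cdot,t)\|_{L^1} = \widehat{Z}(0,t) = \mathbb{E}_\alpha(0) = 1$ together with $F \ge 0$, yields
\[
|U(x,t)| \le \|S\|_{L^\infty} \int_{B_R(x)} Z(w,t)\, dw + \varepsilon.
\]
The change of variables $w = t^\theta \xi$ in the remaining integral converts it into $\int_{B_{Rt^{-\theta}}(xt^{-\theta})} F(\xi)\, d\xi$. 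Since $F \in L^1(\mathbb{R}^N)$, the absolute continuity of the Lebesgue integral furnishes, for every $\eta > 0$, some $\delta > 0$ such that $\int_E F\, d\xi < \eta$ whenever $|E| < \delta$, uniformly in the location of $E$. Applied with $|E| = C(Rt^{-\theta})^N \to 0$, this gives
\[
\sup_{x \in \mathbb{R}^N} \int_{B_R(x)} Z(w,t)\, dw \longrightarrow 0 \quad \text{as } t \to \infty,
\]
so $\limsup_{t\to\infty} \|U(\cdot,t)\|_{L^\infty} \le \varepsilon$, and the arbitrariness of $\varepsilon$ closes the argument.

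The only mildly subtle step is making the spreading-out estimate uniform in $x$; this is handled cleanly by the absolute continuity of the Lebesgue integral applied to $F$. No delicate cancellation, sharp constant, or fine analysis of the Mittag--Leffler function is required, so no step appears genuinely hard.
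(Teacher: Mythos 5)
Your argument is correct, but it takes a genuinely different, softer route than the paper's. The paper's proof is a one-line Young inequality: the profile of $Z$ lies in $L^{p'}(\mathbb{R}^N)$ for every $p'<p_{\rm c}=N/(N-2\beta)$, so $\|Z(\cdot,t)\|_{L^{p'}(\mathbb{R}^N)}=Ct^{-N\theta(1-\frac1{p'})}$, while $0\le S(x)\le C(1+|x|)^{-(N-2\beta)}$ puts $S\in L^p(\mathbb{R}^N)$ for every $p>p_{\rm c}$; choosing $p>N/(2\beta)$ (which gives both $p>p_{\rm c}$ and $p/(p-1)<p_{\rm c}$ precisely because $N>4\beta$) yields the quantitative bound $\|U(\cdot,t)\|_{L^\infty(\mathbb{R}^N)}\le Ct^{-N\theta/p}$. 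Your proof instead uses only that $Z(\cdot,t)$ is a nonnegative kernel of unit mass whose self-similar spreading forces $\sup_x\int_{B_R(x)}Z(w,t)\,{\rm d}w\to0$, combined with boundedness and decay at infinity of $S$; it gives no rate, but it is more elementary (absolute continuity of the integral of the $L^1$ profile suffices) and would work for any $N>2\beta$, not only $N>4\beta$. Two small points. First, the split you describe in words, \lq\lq according to whether $|x-y|$ is less than or greater than $R$'', is not the one you actually use: with that split the far region contains points $y$ near the origin where $S$ is not small, and the $\varepsilon$-bound would fail. The displayed inequality, with the ball $B_R(x)$, is exactly the one obtained by splitting according to $|y|<R$ versus $|y|>R$ (the far part being bounded by $\varepsilon\|Z(\cdot,t)\|_{L^1(\mathbb{R}^N)}=\varepsilon$), and with that reading the rest of your argument is sound. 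Second, invoking Theorem~\ref{thm:behavior.Riesz.potential} for the decay of $S$ is legitimate (the tail condition holds trivially for compactly supported $g$) but heavier than needed: if ${\rm supp}\,g\subset B_\rho$, then $|S(x)|\le c_{2\beta}\|g\|_{L^1(\mathbb{R}^N)}(|x|-\rho)^{2\beta-N}$ for $|x|>2\rho$ already gives the vanishing at infinity you use.
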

\begin{proof}
We recall that the kernel $Z$ has a self-similar form; see~\eqref{eq:Z.selfsimilar}. Its profile $F$ belongs to $L^p(\mathbb{R}^N)$ if and only if $p\in[1,p_{\rm c})$; see, for instance,~\cite{Kemppainen-Siljander-Zacher-2017}. Hence, $\|Z(\cdot,t)\|_{L^p(\mathbb{R}^N)}=Ct^{-N\theta(1-\frac1p)}$ for $p$ in that range. On the other hand, $0\le U(x,0)\le C(1+|x|)^{-(N-2\beta)}$ (see, for instance, Theorem~\ref{thm:behavior.Riesz.potential} in the Appendix), so that $U(\cdot,0)\in L^p(\mathbb{R}^N)$ for all $p>p_{\rm c}$. Let $p>N/(2\beta)$.  This guarantees, on the one hand, that $p>p_c$, since $N>4\beta$, and on the other hand that $p/(p-1)<p_c$. Therefore, for any such~$p$, Young's inequality implies
$$
\|U(\cdot,t)\|_{L^\infty(\mathbb{R}^N)}\le \|Z(\cdot,t)\|_{L^{p/(p-1)}(\mathbb{R}^N)}\|U(\cdot,0)\|_{L^p(\mathbb{R}^N)}= C t^{-\frac{N\theta}p},
$$
whence the result.
\end{proof}
We are now ready to prove the equality of the constants, and hence the validity of Theorem~\ref{teo-compacts}.
\begin{coro}
The constant $A$ defined in~\eqref{eq:definition.A} coincides with $c_{2\beta}$. Therefore, Proposition~\ref{prop:thm.with.A} implies Theorem~\ref{teo-compacts}.
\end{coro}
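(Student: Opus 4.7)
The plan is to apply Proposition~\ref{prop:thm.with.A} to a carefully chosen time-independent forcing and match the resulting asymptotic profile against the explicit stationary solution. Fix a nontrivial, nonnegative $g \in C_{\rm c}^\infty(\R^N)$ and take $f(x,t)\equiv g(x)$. This $f$ trivially satisfies \eqref{eq-hypothesis f} with $\gamma=0$, and the asymptotically-separate-variables condition \eqref{eq:hypothesis.asymp.sep.variables.1} with spatial factor $g$ itself (with zero error). The local $L^q$ hypotheses \eqref{eq:assumption.f.p.not.subcritical} and \eqref{eq:hypothesis.asymp.sep.variables.2} are immediate because $g$ is smooth and compactly supported. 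Since $\gamma=0<1+\alpha$, Proposition~\ref{prop:thm.with.A} applied to the corresponding solution $u$ of \eqref{eq-f} yields, for every $p\in[1,\infty]$ and every compact $K\subset\R^N$,
$$
\|u(\cdot,t)-A\, I_{2\beta}[g]\|_{L^p(K)}\to 0\quad\text{as }t\to\infty.
$$

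Next I would identify the limit of $u(\cdot,t)$ with the stationary solution $S:=c_{2\beta}I_{2\beta}[g]$. By the appendix, $S$ is bounded; and because $c_{2\beta}I_{2\beta}$ inverts $(-\Delta)^\beta$ on Schwartz functions, $S$ is smooth with $(-\Delta)^\beta S = g$ in the classical sense. Since $S$ is time-independent, $\partial_t^\alpha S\equiv 0$, so $S$ is a bounded classical solution of \eqref{eq:fully.nl} with initial datum $u_0=S$ and forcing $f\equiv g$. By the uniqueness and representation of bounded classical solutions recalled before Proposition~\ref{prop:datum.newtonian.potential}, we get the decomposition
$$
S(x)=\int_{\R^N}Z(x-y,t)S(y)\,{\rm d}y+u(x,t),
$$
so that $S-u=U$, where $U$ is the generalized solution of \eqref{eq:Cauchy} with initial datum $S$. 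Proposition~\ref{prop:datum.newtonian.potential} then gives $\|U(\cdot,t)\|_{L^\infty(\R^N)}\to 0$, hence $u(\cdot,t)\to S$ locally uniformly, and in particular in $L^p(K)$.

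Combining the two convergences in $L^p(K)$, I would conclude $(A-c_{2\beta})\,I_{2\beta}[g]=0$ on every compact $K$. Since $g\ge 0$ is nontrivial and the Riesz kernel $E_{2\beta}$ is strictly positive, $I_{2\beta}[g]>0$ pointwise on $\R^N$, forcing $A=c_{2\beta}$. The second assertion of the corollary, that Proposition~\ref{prop:thm.with.A} now implies Theorem~\ref{teo-compacts}, is immediate from the definition of $\mathcal{L}$ in \eqref{eq:profile.L}.

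The only delicate point is checking that $S$ really qualifies as a bounded classical solution of \eqref{eq:fully.nl} so that the aforementioned uniqueness/representation formula applies unambiguously; this reduces to standard smoothness and decay estimates for the Riesz potential of a $C_{\rm c}^\infty$ function, so no serious obstacle is expected. The rest is a clean identification argument exploiting linearity and the fact that $I_{2\beta}[g]$ is strictly positive.
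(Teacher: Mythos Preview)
Your proposal is correct and follows essentially the same approach as the paper: pick a nontrivial nonnegative $g\in C_{\rm c}^\infty(\R^N)$ as a stationary forcing, apply Proposition~\ref{prop:thm.with.A} to get $u(\cdot,t)\to A\,I_{2\beta}[g]$ on compacts, identify $S-u=U$ via the representation formula, use Proposition~\ref{prop:datum.newtonian.potential} to get $u(\cdot,t)\to c_{2\beta}I_{2\beta}[g]$, and conclude $A=c_{2\beta}$ from the positivity of $I_{2\beta}[g]$. The paper presents the final identification slightly more tersely via a single $L^\infty(K)$ triangle inequality, but the ingredients and logic are the same.
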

\begin{proof}
Let $u$ be the solution to problem~\eqref{eq-f} with a non-negative and non-trivial right-hand side $g\in C^\infty_{\rm c}(\mathbb{R})$ independent of time. Then, as mentioned above, $U=S-u$ is a generalized solution to~\eqref{eq:Cauchy}. Thus, given $K\subset\subset\mathbb{R}^N$,
$$
\begin{aligned}
|c_{2\beta}-A|\|I_{2\beta}[g]\|_{L^\infty(K)}&=\|c_{2\beta}I_{2\beta}[g]-u(\cdot,t)+u(\cdot,t)-AI_{2\beta}[g]\|_{L^\infty(K)}\\
&\le \|U(\cdot,t)\|_{L^\infty(K)}+\|u(\cdot,t)-AI_{2\beta}[g]\|_{L^\infty(K)}\to 0\quad\text{as }t\to\infty,
\end{aligned}
$$
due to propositions~\ref{prop:thm.with.A} and~\ref{prop:datum.newtonian.potential}, hence the result, since $\|I_{2\beta}[g]\|_{L^\infty(K)}\neq0$.
\end{proof}

\section{Intermediate scales}\label{sect-intermediate}
\setcounter{equation}{0}

In this section we study the limiting profile in regions where $|x|\simeq \varphi(t)$ with $\varphi(t)\succ 1$ and $\varphi(t)=o(t^\theta)$ as $t\to\infty$.
\begin{proof}[Proof of Theorem~\ref{teo-intermediate}.]
We want to show that $u(\cdot,t)-\mathcal{L}(t)=o(\phi(t))$.
It will turn out that in these scales the large-time behavior at a point $x$ comes in first approximation from the behavior of $f(\cdot,t)$ at points that are relatively close to $x$, as compared with $|x|$. Hence, we estimate the error as $|u-\mathcal{L}|\le {\rm I}+{\rm II}+{\rm III}$, where
	\begin{equation*}
    \label{eq-decomposition intermediate1}
    \begin{aligned}
	{\rm I}(x,t)&=\Big|\int_{0}^t\int_{|x-y|\leq \ell(t) |x|} f(x-y,t-s)Y(y,s)\,{\rm d}y{\rm d}s-\mathcal{L}(t)(x)\Big|,\\
	{\rm II}(x,t)&=\int_0^{t/2}\int_{|x-y|>\ell(t) |x|} |f(x-y,t-s)|Y(y,s)\,{\rm d}y{\rm d}s,\\
	{\rm III}(x,t)&=\int_{t/2}^t\int_{|x-y|>\ell(t) |x|} |f(x-y,t-s)Y(y,s)|\,{\rm d}y{\rm d}s,
	\end{aligned}
	\end{equation*}
with $\ell(t)=o(1)$ such that $\ell(t)\in(0,1/2)$ for all $t>0$ to be further especified later.

The times that are closer to $t$ will contribute with a term involving $E_{2\beta}$ in slow and critical scales, while times which are closer to  $0$ will contribute with a term involving $E_{4\beta}$ in fast and critical scales. Therefore, we make an estimate of the form
${\rm I}\le {\rm I}_1+{\rm I}_2+{\rm I}_3$, where
\begin{equation*}\begin{aligned}
{\rm I}_1(x,t)&=\begin{cases}
\displaystyle\int_{0}^{t/2}\int_{|x-y|\leq \ell(t) |x|} |f(x-y,t-s)|Y(y,s)\,{\rm d}y{\rm d}s&\text{if \eqref{eq:rates.fast.1}/\eqref{eq:rates.fast}},\\
\displaystyle\Big|\int_{0}^{t/2}\int_{|x-y|\leq \ell(t) |x|} f(x-y,t-s)Y(y,s)\,{\rm d}y{\rm d}s-M_0At^{-\gamma} E_{2\beta}(x)\Big|&\text{if \eqref{eq:rates.slow}/\eqref{eq:rates.critical.1}/\eqref{eq:rates.critical}},
\end{cases}
\\
{\rm I}_2(x,t)&=\int_{t/2}^{t-t^{1-\delta(t)}}\int_{|x-y|\leq \ell(t) |x|} |f(x-y,t-s)|Y(y,s)\,{\rm d}y{\rm d}s,\\
{\rm I}_3(x,t)&=\begin{cases}\displaystyle\int_{t-t^{1-\delta(t)}}^t\int_{|x-y|\leq \ell(t) |x|} |f(x-y,t-s)|Y(y,s)\,{\rm d}y{\rm d}s,&\text{if \eqref{eq:rates.slow}},\\
\displaystyle\Big|\int_{t-t^{1-\delta(t)}}^t\int_{|x-y|\leq \ell(t) |x|} f(x-y,t-s)Y(y,s)\,{\rm d}y{\rm d}s\\
\hskip7.75cm-M_0\kappa t^{-(1+\alpha)}\log tE_{4\beta}(x)\Big|&\text{if \eqref{eq:rates.critical.1}/\eqref{eq:rates.fast.1}},\\
\displaystyle\Big|\int_{t-t^{1-\delta(t)}}^t\int_{|x-y|\leq \ell(t) |x|} f(x-y,t-s)Y(y,s)\,{\rm d}y{\rm d}s-M_\infty\kappa t^{-(1+\alpha)}E_{4\beta}(x)\Big|&\text{if \eqref{eq:rates.critical}/\eqref{eq:rates.fast}},
\end{cases}
\end{aligned}
\end{equation*}
with $\delta(t)\in (\log 2/\log t,1/2)$ to be further specified later. The lower bound for $\delta(t)$ guarantees that $t/2<t-t^{1-\delta(t)}$.

Since $\ell(t)\in (0,1/2)$, $\frac{|x|}2<|y|<\frac{3|x|}2$ if $|x-y|<\ell(t)|x|$. Hence, the global estimate~\eqref{eq:global.estimate.Y} yields
\begin{equation}
\label{eq:global.estimate.y.sim.x}
0\le Y(y,s)\le C s^{-(1+\alpha)}E_{4\beta}(x), \quad (y,s)\in Q,
\end{equation}
a bound that will used several times when estimating ${\rm I}_i$, $i=1,2,3$.

Let~\eqref{eq:rates.fast.1}/\eqref{eq:rates.fast} hold. If $\nu \varphi(t)<|x|<\mu\varphi(t)$, for large times we have on the one hand $|x|\neq0$, since $\varphi(t)\succ 1$, and on the other hand $(|x|/2)^{1/\theta}<t/2$ for large times, since $\varphi(t)=o(t^\theta)$. If $s<(|x|/2)^{1/\theta}$ and  $|y|>\frac{|x|}2$, we have $|y|>s^\theta$, and we may use the exterior estimate~\eqref{eq:exterior.estimate.Y} for the kernel. If $s>(|x|/2)^{1/\theta}$ we are away from the singularity in time, and we may use the global estimate~\eqref{eq:global.estimate.y.sim.x}. Therefore, using also the size assumption~\eqref{eq-hypothesis f} on $f$, we obtain
$$
\begin{aligned}
{\rm I}_1(x,t) &\leq C\int_{0}^{(|x|/2)^{1/\theta}}\int_{|y|>|x|/2} |f(x-y,t-s)|s^{2\alpha-1}E_{-2\beta}(y)\,{\rm d}y{\rm d}s
\\
&\quad +C\int_{(|x|/2)^{1/\theta}}^{t/2}\int_{|y|>|x|/2} |f(x-y,t-s)|s^{-(1+\alpha)}E_{4\beta}(y)\,{\rm d}y{\rm d}s\\
&\le C t^{-\gamma}\Big(E_{-2\beta}(x)\int_0^{(|x|/2)^{1/\theta}}s^{2\alpha-1}\,{\rm d}s+E_{4\beta}(x)
\int_{(|x|/2)^{1/\theta}}^{t/2}s^{-(1+\alpha)}\,{\rm d}s\Big)=O(t^{-\gamma})E_{2\beta}(x),
\end{aligned}
$$
which combined with~\eqref{eq:behaviour.powers} yields $\|{\rm I}_1(\cdot,t)\|_{L^p(\{\nu<|x|/ \varphi(t)<\mu\})}=O\big(t^{-\gamma}\varphi(t)^{\frac{1-\sigma(p)}\theta}\big)=o(\phi(t))$ if~\eqref{eq:rates.fast.1} or~\eqref{eq:rates.fast} hold.

When \eqref{eq:rates.slow}/\eqref{eq:rates.critical.1}/\eqref{eq:rates.critical} hold, we make the estimate ${\rm I}_1\le\sum_{i=1}^5{\rm I}_{1i}$, where
$$
\begin{aligned}
{\rm I}_{11}(x,t)&=
\int_0^{(|x|/2)^{1/\theta}}\int_{|x-y|<\ell(t)|x|}|f(x-y,t-s)(1+t-s)^\gamma-g(x-y)|(1+t-s)^{-\gamma}Y(y,s) \,{\rm d}y{\rm d}s,\\
{\rm I}_{12}(x,t)&=
\int_{(|x|/2)^{1/\theta}}^{t/2}\int_{|x-y|<\ell(t)|x|}|f(x-y,t-s)(1+t-s)^\gamma-g(x-y)|(1+t-s)^{-\gamma}Y(y,s) \,{\rm d}y{\rm d}s,\\
{\rm I}_{13}(x,t)&= \int_0^{\delta t}\big|(1+t-s)^{-\gamma}-t^{-\gamma}\big|\int_{|x-y|<\ell(t)|x|}|g(x-y)|Y(y,s)\,{\rm d}y{\rm d}s,\\
{\rm I}_{14}(x,t)&= \int_{\delta t}^{t/2}\big|(1+t-s)^{-\gamma}-t^{-\gamma}\big|\int_{|x-y|<\ell(t)|x|}|g(x-y)|Y(y,s)\,{\rm d}y{\rm d}s,\\
{\rm I}_{15}(x,t)&= t^{-\gamma}\int_{|x-y|<\ell(t)|x|}|g(x-y)|E_{2\beta}(y)\Big|\int_0^{t/2}\frac{Y(y,s)}{E_{2\beta}(y)}\,{\rm d}s- A\Big|\,{\rm d}y,\\
{\rm I}_{16}(x,t)&=t^{-\gamma}A\int_{|x-y|<\ell(t)|x|}|g(x-y)||E_{2\beta}(y)-E_{2\beta}(x)|\,{\rm d}y,\\
{\rm I}_{17}(x,t)&=t^{-\gamma}A E_{2\beta}(x)\int_{|x-y|\ge\ell(t)|x|}|g(x-y)|\,{\rm d}y.
\end{aligned}
$$

Since $s^\theta <|x|/2<|y|$ in the region of integration of ${\rm I}_{11}$,   we may use on the one hand the exterior estimate~\eqref{eq:exterior.estimate.Y} for the kernel, and on the other hand that $E_{-2\beta}(y)\le C E_{-2\beta}(x)$. Hence,
$$
{\rm I}_{11}(x,t) \le C t^{-\gamma}v(t)E_{-2\beta}(x)\int_0^{(|x|/2)^{1/\theta}}s^{2\alpha-1}\,{\rm d}s,
$$
with $v(t)=\sup_{\tau>t/2}\big\|f(\cdot,\tau)(1+\tau)^\gamma-g\|_{L^1(\R^N)}$.
But we know from~\eqref{eq:hypothesis.asymp.sep.variables.1} that $v(t)=o(1)$. Therefore,
${\rm I}_{11}(x,t)=o(t^{-\gamma})E_{2\beta}(x)$,
whence, using~\eqref{eq:behaviour.powers},
$$
\|{\rm I}_{11}(\cdot,t)\|_{L^p(\{\nu<|x|/ \varphi(t)<\mu\})}=o\big(t^{-\gamma}\varphi(t)^{\frac{1-\sigma(p)}\theta}\big)=o(\phi(t)).
$$

The region of integration in ${\rm I}_{12}$ avoids the singularity in time. Hence,  we may use the global estimate~\eqref{eq:global.estimate.y.sim.x} to obtain
$$
{\rm I}_{12}(x,t) \le C t^{-\gamma}v(t)E_{4\beta}(x)\int_{(|x|/2)^{1/\theta}}^{t/2}s^{-(1+\alpha)}\,{\rm d}s=o(t^{-\gamma})E_{2\beta}(x),
$$
whence $\|{\rm I}_{12}(\cdot,t)\|_{L^p(\{\nu<|x|/ \varphi(t)<\mu\})}=o(\phi(t))$.

To estimate ${\rm I}_{13}$ we note that  $\big|(1+t-s)^{-\gamma}-t^{-\gamma}\big|\le \varepsilon t^{-\gamma}$  if  $s\in(0,\delta t)$ with $\delta$ small and $t$ large. Therefore, changing the order of integration,
$$
{\rm I}_{13}(x,t)\le C\varepsilon t^{-\gamma} \int_{|x-y|<\ell(t)|x|}|g(x-y)|\int_0^{\delta t}Y(y,s)\,{\rm d}s{\rm d}y.
$$
But~\eqref{eq:definition.A} and~\eqref{eq:convergence.to.A} yield $\displaystyle\int_0^{\delta t}Y(y,s)\,{\rm d}s\le AE_{2\beta}(y)$. Hence, since $E_{2\beta}(y)\le CE_{2\beta}(x)$ for $|y|>|x|/2$, and using also the integrability of $g$,
$$
{\rm I}_{13}(x,t)\le C\varepsilon t^{-\gamma}E_{2\beta}(x) \int_{\frac12|x|<|y|<\frac32|x|}|g(x-y)|\,{\rm d}y\le C\varepsilon t^{-\gamma}E_{2\beta}(x),
$$
which combined with~\eqref{eq:behaviour.powers} yields $\|{\rm I}_{12}(\cdot,t)\|_{L^p(\{\nu<|x|/ \varphi(t)<\mu\})}=C\varepsilon t^{-\gamma}\varphi(t)^{\frac{1-\sigma(p)}\theta}\big)=C\varepsilon \phi(t))$.

In the estimate of ${\rm I}_{14}$  we may use once more the global estimate~\eqref{eq:global.estimate.y.sim.x}, since we are away from the singularity in time, to obtain
$$
{\rm I}_{14}(x,t) \le C t^{-\gamma}E_{4\beta}(x)\int_{\delta t}^{t/2}s^{-(1+\alpha)}\,{\rm d}s=O\big(t^{-(\gamma+\alpha)}\big)E_{4\beta}(x).
$$
Thus, using~\eqref{eq:behaviour.powers} and also that $\varphi(t)=o(t^\theta)$, we arrive at
$$
\|{\rm I}_{12}(\cdot,t)\|_{L^p(\{\nu<|x|/ \varphi(t)<\mu\})}=O\big(t^{-(\gamma+\alpha)}\varphi(t)^{\frac{1+\alpha-\sigma(p)}\theta}\big)=o\big(t^{-\gamma}\varphi(t)^{\frac{1-\sigma(p)}\theta}\big)=o(\phi(t)).
$$

As for ${\rm II}_{15}$, since $|y|\le 3|x|/2$ in the region of integration, it follows from~\eqref{eq:convergence.to.A} that
$$
\Big|\int_0^{t/2}\frac{Y(y,s)}{E_{2\beta}(y)}\,{\rm d}s- A\Big|<\int_0^{\frac{3|x|}{2(t/2)^\theta}}\rho^{N-1-2\beta} G\Big(\frac{y}{|y|}\rho\Big)\,{\rm d}\rho=o(1).
$$
Hence, using also that $|y|\ge |x|/2$ and the integrability of $g$, we get  ${\rm I}_{15}(x,t)= o(t^{-\gamma})E_{2\beta}(x)$, whence
$$
\|{\rm I}_{15}(\cdot,t)\|_{L^p(\{\nu<|x|/ \varphi(t)<\mu\})}=o\big(t^{-\gamma}\varphi(t)^{\frac{1-\sigma(p)}\theta}\big)=o(\phi(t)).
$$

If $|x-y|<\ell(t)|x|$, then  $\big||y|-|x|\big|\le |x-y|\le \ell(t)|x|$. Thus, using the Mean Value Theorem,
\begin{equation*}
\label{eq:difference.Ex.Ey}
|E_{2\beta}(y)-E_{2\beta}(x)|\le \frac{(N-2\beta)\ell(t)}{(1-\ell(t))^{N-2\beta+1}}E_{2\beta}(x)\le C\ell(t) E_{2\beta}(x).
\end{equation*}
Thus, ${\rm I}_{16}(x,t)=o(t^{-\gamma})E_{2\beta}(x)$, whence
$\|{\rm I}_{16}(\cdot,t)\|_{L^p(\{\nu<|x|/ \varphi(t)<\mu\})}=o\big(t^{-\gamma}\varphi(t)^{\frac{1-\sigma(p)}\theta}\big)=o(\phi(t))$.

As for ${\rm I}_{17}$, if $|x|>\nu\varphi(t)$ with $\varphi(t)\succ 1$, taking $\ell(t)\succ 1/\varphi(t)$,
$$
\int_{|x-y|\ge \ell(t)|x|}|g(x-y)|\,{\rm d}y\le \int_{|z|\ge \ell(t) \nu\varphi(t)}|g(z)|\,{\rm d}z=o(1)\quad \text{as }t\to\infty,
$$
since $g\in L^1(\mathbb{R}^N)$. Therefore, $\|{\rm I}_{17}(\cdot,t)\|_{L^p(\{\nu \le |x|/\varphi(t)\le \mu\})}=o(\phi(t))$.

Summarizing, if $\ell(t)=o(1)$ is such that $\ell(t)\succ 1/\varphi(t)$, then $\|{\rm I}_1(\cdot,t)\|_{L^p(\{\nu \le |x|/\varphi(t)\le \mu\})}=o(\phi(t))$.

We now turn our attention to ${\rm I}_{2}$. Using again~\eqref{eq:global.estimate.y.sim.x} and the assumption~\eqref{eq-hypothesis f} on the time decay of $f$, we have, since $s>t/2$ in the region of integration,
$$
{\rm I}_2(x,t)\le C t^{-(1+\alpha)}E_{4\beta}(x)\int_{t/2}^{t-t^{1-\delta(t)}}(1+t-s)^{-\gamma}\,{\rm d}s.
$$
On the other hand (remember that $\delta(t)<1/2$, so that $t^{1-\delta(t)}\to\infty$),
$$
\int_{t/2}^{t-t^{1-\delta(t)}}(1+t-s)^{-\gamma}\,{\rm d}s=\int_{t^{1-\delta(t)}}^{t/2}(1+s)^{-\gamma}\,{\rm d}s=\begin{cases}O(t^{1-\gamma}),&\gamma<1,\\
\log\frac{1+(t/2)}{1+t^{1-\delta(t)}}\le C\log(t^{\delta(t)}/2),&\gamma=1,\\
o(1),&\gamma>1
\end{cases}
$$
If $\delta(t)\succ 1/\log t$, then $t^{\delta(t)}\to\infty$, and hence $\log(t^{\delta(t)}/2)<C\log t^{\delta(t)}=C\delta(t)\log t=o(\log t)$ if $\delta(t)=o(1)$. With these additional assumptions on $\delta(t)$, we have then
$$
\|{\rm I}_2(\cdot,t)\|_{L^p(\{\nu <|x|/ \varphi(t)<\mu\})}=\begin{cases} O\big(t^{-(\gamma+\alpha)}\varphi(t)^{\frac{1+\alpha-\sigma(p)}\theta}\big)=o\big(t^{-\gamma}\varphi(t)^{\frac{1-\sigma(p)}\theta}\big),&\gamma<1,\\
o\big(t^{-(1+\alpha)}\log t\,\varphi(t)^{\frac{1+\alpha-\sigma(p)}\theta}\big),&\gamma=1,\\
o\big(t^{-(1+\alpha)}\varphi(t)^{\frac{1+\alpha-\sigma(p)}\theta}\big),&\gamma>1,
\end{cases}
$$
where we have used  that $\varphi(t)=o(t^\theta)$ in the last equality of the case $\gamma<1$. This estimate yields $\|{\rm I}_2(\cdot,t)\|_{L^p(\{\nu <|x|/ \varphi(t)<\mu\})}=o(\phi(t))$ in all cases.

As for ${\rm I}_3$, if \eqref{eq:rates.slow}, we use once more~\eqref{eq:global.estimate.y.sim.x} and~\eqref{eq-hypothesis f} to obtain, since $s>t/2$ in this case,
$$
\begin{aligned}
{\rm I}_3(x,t)&\le E_{4\beta}(x)\int_{t-t^{1-\delta(t)}}^t s^{-(1+\alpha)}(1+t-s)^{-\gamma}\,{\rm d}s \le Ct^{-(1+\a)}E_{4\beta}(x)\int_{t-t^{1-\delta(t)}}^t (1+t-s)^{-\gamma}\,{\rm d}s\\
&\le Ct^{-(1+\a)}E_{4\beta}(x)
\begin{cases}
O\big(t^{(1-\delta(t))(1-\gamma)}\big)=o(t^{1-\gamma}),&\gamma<1,\\
(1-\delta(t))\log t=O(\log t),&\gamma=1,\\
O(1),&\gamma>1,
\end{cases}
\end{aligned}
$$
where we have used that $\delta(t)\succ 1/\log t$ to show that $t^{-\delta(t)}=o(1)$ in the case $\gamma<1$. From here it is easily checked, using also that $\varphi(t)=o(t^\theta)$ when $\gamma<1$, that for all the cases included in~\eqref{eq:rates.slow} we have
$$
\|{\rm I}_3(\cdot,t)\|_{L^p(\{\nu<|x|/ \varphi(t)<\mu\})}=o\big(t^{-\gamma}\varphi(t)^{\frac{1-\sigma(p)}\theta}\big)=o(\phi(t)).
$$

If~\eqref{eq:rates.critical.1}/\eqref{eq:rates.fast.1}, we make the estimate ${\rm I}_3\le\sum_{i=1}^5{\rm I}_{3i}$, where
$$
\begin{aligned}
{\rm I}_{31}(x,t)&=
\int_{t-t^{1-\delta(t)}}^t\int_{|x-y|\leq \ell(t) |x|} |f(x-y,t-s)| |Y(y,s)-\kappa s^{-(1+\alpha)}E_{4\beta}(y)|\,{\rm d}y{\rm d}s,\\
{\rm I}_{32}(x,t)&=\kappa\int_{t-t^{1-\delta(t)}}^t\int_{|x-y|\leq\ell(t) |x|} |f(x-y,t-s)| s^{-(1+\alpha)}|E_{4\beta}(y)-E_{4\beta}(x)|\,{\rm d}y{\rm d}s,\\
{\rm I}_{33}(x,t)&=\kappa E_{4\beta}(x)\int_{t-t^{1-\delta(t)}}^t\int_{|x-y|\leq\ell(t) |x|} |f(x-y,t-s)| |s^{-(1+\alpha)}-t^{-(1+\alpha)}|\,{\rm d}y{\rm d}s,\\
{\rm I}_{34}(x,t)&=\kappa t^{-(1+\alpha)}E_{4\beta}(x)\Big|\int_{t-t^{1-\delta(t)}}^t\int_{|x-y|<\ell(t) |x|} f(x-y,t-s) \,{\rm d}y{\rm d}s- M_0\log(1+ t)\Big|,\\
{\rm I}_{35}(x,t)&=\kappa M_0 t^{-(1+\alpha)}E_{4\beta}(x)|\log(1+t)-\log t|.
\end{aligned}
$$
Since $|y|>|x|/2$ and $s> t/2$ in the integration region for ${\rm I}_{31}$,
$$
\begin{aligned}
{\rm I}_{31}(x,t)&=
\int_{t-t^{1-\delta(t)}}^t\int_{|x-y|\leq \ell(t) |x|} |f(x-y,t-s)| s^{-(1+\alpha)}E_{4\beta}(y)\Big|\frac{G(ys^{-\theta})}{E_{4\beta}(ys^{-\theta})}-\kappa\Big|\,{\rm d}y{\rm d}s\\
&\le C t^{-(1+\alpha)}E_{4\beta}(x)
\int_{t-t^{1-\delta(t)}}^t\int_{|x-y|\leq \ell(t) |x|} |f(x-y,t-s)| \Big|\frac{G(ys^{-\theta})}{E_{4\beta}(ys^{-\theta})}-\kappa\Big|\,{\rm d}y{\rm d}s.
\end{aligned}
$$
But $|y|\le 3|x|/2\le 3\mu\varphi(t)/2$ and $s>t/2$ imply that $|y|s^{-\theta}\le C\varphi(t)t^{-\theta}=o(1)$. Hence,~\eqref{eq:constant.origin} yields
$$
\Big|\frac{G(ys^{-\theta})}{E_{4\beta}(ys^{-\theta})}-\kappa\Big|=o(1)\quad\text{as }t\to\infty.
$$
Therefore, since $\gamma=1$ in this case, using the assumption~\eqref{eq-hypothesis f} on $f$,
$$
{\rm I}_{31}(x,t)= o(t^{-(1+\a)})E_{4\beta}(x)\int_{t-t^{1-\delta(t)}}^t(1+t-s)^{-1}\,{\rm d}s=  o\big(t^{-(1+\a)}\log t\big)E_{4\beta}(x).
$$

By the Mean Value Theorem, since $\big||y|-|x|\big|\le |x-y|\le \ell(t)|x|$ in the region of integration of ${\rm I}_{32}$,
\begin{equation*}
\label{eq:diferencias.Ex.Ey}
|E_{4\beta}(y)-E_{4\beta}(x)|\le \frac{(N-4\beta)\ell(t)}{(1-\ell(t))^{N-4\beta+1}}E_{4\beta}(x)\le C\ell(t) E_{4\beta}(x)
\end{equation*}
there, since $\ell(t)<1/2$. Thus, using also the assumption~\eqref{eq-hypothesis f},
\[
{\rm I}_{32}(x,t)\le C \ell(t) t^{-(1+\a)}E_{4\beta}(x) \int_{t-t^{1-\delta(t)}}^t(1+t-s)^{-1}\,{\rm d}s=o\big(t^{-(1+\a)}\log t\big)E_{4\beta}(x).
\]

In order to estimate ${\rm I}_{33}$, we apply the Mean Value Theorem to obtain that
$$
|s^{-(1+\alpha)}-t^{-(1+\alpha)}|\le C t^{-(1+\alpha)} t^{-\delta(t)}=o(t^{-(1+\alpha)})
$$
for all $s\in (t-t^{1-\delta(t)},t)$. Thus, using the assumption~\eqref{eq-hypothesis f},
$$
{\rm I}_{33}(x,t)= o(t^{-(1+\a)})E_{4\beta}(x)\int_{t-t^{1-\delta(t)}}^t(1+t-s)^{-1}\,{\rm d}s=  o\big(t^{-(1+\a)}\log t\big)E_{4\beta}(x).
$$

As for ${\rm I}_{34}$, we observe that
\[
\begin{aligned}
{\rm I}_{34}(x,t)&=\kappa t^{-(1+\alpha)}E_{4\beta}(x)\Big|\int_0^{t^{1-\delta(t)}}\int_{|y|<\ell(t)|x|} f(y,s)\,{\rm d}y{\rm d}s-\int_0^t\int_{\mathbb{R}^N} g(y)(1+s)^{-1}\,{\rm d}y{\rm d}s\Big|\\
&\le \kappa t^{-(1+\alpha)}E_{4\beta}(x)\Big(\int_0^{t^{\delta(t)}}\|f(\cdot,s)\|_{L^1(\R^N)}\,{\rm d}s+\|g\|_{L^1(\R^N)}\int_0^{t^{\delta(t)}}(1+s)^{-1}\,{\rm d}s\\
&\quad+ \int_{t^{\delta(t)}}^{t^{1-\delta(t)}}\sup_{s>t^{\delta(t)}}\|f(\cdot,s)(1+s)-g\|_{L^1(\R^N)}(1+s)^{-1}\,{\rm d}s\\
&\quad+\int_{|y|>\ell(t){|x|}}|g(y)|\int_{t^{\delta(t)}}^{t^{1-\delta(t)}}(1+s)^{-1}\,{\rm d}s+\|g\|_{L^1(\R^N)}\int_{t^{1-\delta(t)}}^t(1+s)^{-1}\,{\rm d}s\Big).
\end{aligned}
\]
Notice that $t^{\delta(t)}<t^{1-\delta(t)}$, since $\delta(t)<1/2$. From~\eqref{eq:hypothesis.asymp.sep.variables.1} we get that
$$
\sup_{s>t^{\delta(t)}}\|f(\cdot,s)(1+s)-g\|_{L^1(\R^N)}=o(1),
$$
since, due to the condition $\delta(t)\succ 1/\log t$, $t^{\delta(t)}\to\infty$ as $t\to\infty$. Moreover, if $|x|\ge \nu \varphi(t)$,
$$
\int_{|y|>\ell(t){|x|}}|g(y)|\,{\rm d}y \le \int_{|y|>\ell(t)\nu\varphi(t)}|g(y)|\,{\rm d}y=o(1),
$$
since $g$ is integrable and $\ell(t)\varphi(t)\to\infty$ (remember that $\ell(t)\succ 1/\varphi(t)$). Therefore, using also the assumption~\eqref{eq-hypothesis f},
$$
{\rm I}_{34}(x,t)
\le C t^{-(1+\alpha)}E_{4\beta}(x)(\delta(t)\log t+o(1)\log t))=o(t^{-(1+\alpha)}\log t)E_{4\beta}(x),
$$
since $\delta(t)=o(t)$.

It is immediate to check that
$$
{\rm I_{35}}(x,t)=\kappa M_0 t^{-(1+\alpha)}\log t E_{4\beta}(x)\big|\frac{\log(1+t)}{\log t}-1|=o(t^{-(1+\a)}\log t)E_{4\beta}(x).
$$

Summarizing,  ${\rm I}_3(x,t)=o(t^{-(1+\a)}\log t)E_{4\beta}(x)$, whence
$$
\|{\rm I}_3(\cdot,t)\|_{L^p(\{\nu <|x|/ \varphi(t)<\mu\})}=o\big(t^{-(1+\a)}\log t\,\varphi(t)^{\frac{1+\alpha-\sigma(p)}\theta}\big),
$$
from where it is easily checked that $\|{\rm I}_3(\cdot,t)\|_{L^p(\{\nu <|x|/ \varphi(t)<\mu\})}=o(\phi(t))$ if~\eqref{eq:rates.critical.1}/\eqref{eq:rates.fast.1}.

If~\eqref{eq:rates.critical}/\eqref{eq:rates.fast},  we make the estimate ${\rm I}_3\le\sum_{i=1}^6{\rm I}_{3i}$, with ${\rm I}_{3i}$, $i=1,2,3$ as for~\eqref{eq:rates.critical.1}/\eqref{eq:rates.fast.1}, and
$$
\begin{aligned}{\rm I}_{34}(x,t)&=\kappa t^{-(1+\alpha)}E_{4\beta}(x)\int_0^{t-t^{1-\delta(t)}}\int_{|x-y|\leq \ell(t) |x|} |f(x-y,t-s)| \,{\rm d}y{\rm d}s,\\
{\rm I}_{35}(x,t)&=\kappa t^{-(1+\alpha)}E_{4\beta}(x)\int_0^t\int_{|x-y|> \ell(t) |x|} |f(x-y,t-s)| \,{\rm d}y{\rm d}s,\\
{\rm I}_{36}(x,t)&=\kappa t^{-(1+\alpha)}E_{4\beta}(x)\int_t^\infty\int_{\mathbb{R}^N}|f(y,s)|\,{\rm d}s.
\end{aligned}
$$
Reasoning as for the cases~\eqref{eq:rates.critical.1}/\eqref{eq:rates.fast.1}, we get (notice that now $\gamma>1$),
$$
{\rm I}_{3i}(x,t)=o(t^{-(1+\a)})E_{4\beta}(x)\int_{t-t^{1-\delta(t)}}^t(1+t-s)^{-1}\,{\rm d}s=  o(t^{-(1+\a)})E_{4\beta}(x),\quad i=1,2,3.
$$
On the other hand, using the hypothesis~\eqref{eq-hypothesis f} on $f$,
\[{\rm I}_{34}(x,t)\le Ct^{-(1+\alpha)}E_{4\beta}(x)\int_0^{t-t^{1-\delta(t)}}(1+t-s)^{-\gamma}\,{\rm d}s
\le C t^{-(\gamma+\alpha)}E_{4\beta}(x).
\]
Finally, as $f\in L^1(Q)$ and $\ell(t)\succ 1/\phi(t)$,
\[
\begin{aligned}
&\int_0^{t}\int_{{|x-y|> \ell(t) |x|}} |f(x-y,t-s)|\,{\rm d}y{\rm d}s\le
\int_0^{\infty}\int_{{|y|> \ell(t)\nu\varphi(t) }} |f(y,s)|\,{\rm d}y{\rm d}s=o(1) \quad\text{for }|x|>\nu\varphi(t),\\
&\int_{t}^\infty\int_{\mathbb{R}^N} |f(y,s)|\,{\rm d}y{\rm d}s=o(1).
\end{aligned}
\]
Therefore, ${\rm I}_{3i}(x,t)= o(t^{-(1+\alpha)})E_{4\beta}(x)$, $i=1,6$.

Summarizing,  ${\rm I}_3(x,t)=o(t^{-(1+\a)})E_{4\beta}(x)$, whence
$$
\|{\rm I}_3(\cdot,t)\|_{L^p(\{\nu <|x|/ \varphi(t)<\mu\})}=o\big(t^{-(1+\a)}\varphi(t)^{\frac{1+\alpha-\sigma(p)}\theta}\big),
$$
from where it is easily checked that $\|{\rm I}_3(\cdot,t)\|_{L^p(\{\nu <|x|/ \varphi(t)<\mu\})}=o(\phi(t))$ if~\eqref{eq:rates.critical}/\eqref{eq:rates.fast}.

Now we analyze ${\rm II}$. We make the decomposition ${\rm II}={\rm II}_1+{\rm II}_2+{\rm II}_3$, where
\begin{equation*}\label{eq-decomposition intermediate2}
\begin{aligned}
{\rm II}_1(x,t)&= \int_0^{ t/2}\int_{\stackrel{|y|<k(t)|x|}{|x-y|>\ell(t) |x|}}|f(x-y,t-s)|Y(y,s)\,{\rm d}y{\rm d}s\\
{\rm II}_2(x,t)&= \int_0^{(k(t)|x|)^{1/\theta}}\int_{\stackrel{|y|>k(t)|x|}{|x-y|>\ell(t) |x|}}|f(x-y,t-s)|Y(y,s)\,{\rm d}y{\rm d}s\\
{\rm II}_3(x,t)&= \int_{(k(t)|x|)^{/\theta}}^{ t/2}\int_{\stackrel{|y|>k(t)|x|}{|x-y|>\ell(t) |x|}}|f(x-y,t-s)|Y(y,s)\,{\rm d}y{\rm d}s,
\end{aligned}
\end{equation*}
with $k(t)=o(1)$ such that $k(t)\in(0,1/2)$ for all $t>0$ to be further especified later.

We estimate ${\rm II}_{1}$ as  ${\rm II}_1\le{\rm II}_{11}+{\rm II}_{12}$, where
\begin{equation*}\label{eq-decomposition II2}\begin {aligned}
{\rm II}_{11}(x,t)&=\int_0^{t/2}\int_{|y|<\min\{k(t)|x|,s^{\theta}\}}|f(x-y,t-s)|Y(y,s)\,{\rm d}y{\rm d}s,\\
{\rm II}_{12}(x,t)&=\int_0^{(k(t)|x|)^{1/\theta}}\int_{s^\theta<|y|<k(t)|x|}|f(x-y,t-s)|Y(y,s)\,{\rm d}y{\rm d}s.
\end{aligned}\end{equation*}
Since $k(t)<1/2$,  $|x-y|\ge|x|/2>\nu \varphi(t)/2$. Hence, taking $q$ and $r$ as in~\eqref{eq:choice.q}, and using the global bound~\eqref{eq:global.estimate.Y},
\[
\begin{aligned}
\|{\rm II}_{11}(\cdot,t)\|_{L^p(\{\nu<|x|/ \varphi(t)<\mu\})}&\le\int_0^{t/2} \|f(\cdot,t-s)\|_{L^q(\{|x|>\frac\nu2\varphi(t)\}}  \|Y(\cdot,s)\|_{L^r(\{|x|<\min\{\mu k(t)\varphi(t),s^{\theta}\}\})}\,{\rm d}s\\
&\le C m(t)t^{-\gamma}\int_0^{t/2}   s^{-(1+\alpha)}(\min\{\mu k(t)\varphi(t),s^{\theta}\})^{\frac{1+\alpha-\sigma(r)}{\theta}}\,{\rm d}s,
\end{aligned}
\]
where $m(t):=\sup_{\tau>0}\|f(\cdot,\tau)(1+\tau)^\gamma\|_{L^q(\{|x|>\frac\nu2\varphi(t)\})}$, since $\|E_{4\beta}\|_{L^r(\{|x|<a\})}\le Ca^{\frac{1+\alpha-\sigma(r)}\theta}$. Thanks to assumptions~\eqref{eq-hypothesis f} and~\eqref{eq:tail.control.intermediate},
$$
m(t)=O\big(\varphi(t)^{-N\big(1-\frac1q\big)}\big)
=O\big(\varphi(t)^{\frac{\sigma(r)-\sigma(p)}\theta}\big),
$$
and hence, since $k(t)=o(1)$ and $\sigma(r)<1$,
\[
\begin{aligned}
\|{\rm II}_{11}&(\cdot,t)\|_{L^p(\{\nu<|x|/ \varphi(t)<\mu\})}\\
&\le Cm(t)t^{-\gamma}\Big( \int_0^{(k(t)\mu\varphi(t))^{1/\theta}}  s^{-\sigma(r)} \,{\rm d}s +(k(t)\mu\varphi(t))^{\frac{1+\alpha-\sigma(r)}\theta}\int_{(k(t)\mu\varphi(t)))^{1/\theta}}^{t/2}  s^{-(1+\a)} \,{\rm d}s\Big)\\
&\le Cm(t)t^{-\gamma} (k(t)\varphi(t))^{\frac{1-\sigma(r)}\theta} \le k(t)^{\frac{1-\sigma(r)}\theta}O\big(t^{-\gamma}\varphi(t)^{\frac{1-\sigma(p)}\theta}\big)=o\big(t^{-\gamma}\varphi(t)^{\frac{1-\sigma(p)}\theta}\big)
=o(\phi(t)).
\end{aligned}
\]

As for ${\rm II}_{12}$, since $|y|/s^\theta>1$ in the integration range, we may use the exterior estimate~\eqref{eq:exterior.estimate.Y} for the kernel to obtain
$$
{\rm II}_{12}(x,t)\le C\int_0^{(k(t)|x|)^{1/\theta}}\int_{s^\theta<|y|<k(t)|x|}|f(x-y,t-s)|s^{2\a-1}E_{-2\beta}(y)\,{\rm d}y{\rm d}s.
$$
On the other hand, since $k(t)<1/2$,  $|x-y|\ge|x|/2>\nu \varphi(t)/2$. Hence, taking $q$ and $r$ as in~\eqref{eq:choice.q},
\[
\begin{aligned}
\|{\rm II}_{12}(\cdot,t)\|_{L^p(\{\nu<|x|/ \varphi(t)<\mu\})}
&\le
C\int_0^{(\mu k(t)  \varphi(t))^{1/\theta}}\|f(\cdot,t-s)\|_{L^q(\{|x|>\frac\nu2\varphi(t)\})} s^{2\a-1}\|E_{-2\beta}\|_{L^r(\{|x|>s^{\theta}\})}\,{\rm d}s\\
&\le C m(t)t^{-\gamma}\int_0^{(\mu k(t)  \varphi(t)))^{1/\theta}}s^{-\sigma(r)}\,{\rm d}s\le k(t)^{\frac{1-\sigma(r)}\theta}O\big(t^{-\gamma}\varphi(t)^{\frac{1-\sigma(p)}\theta}\big)
\\
&=o(\phi(t)).
\end{aligned}
\]

In order to estimate ${\rm II}_2$, we observe that in the region of integration $s^\theta<k(t) |x|<|y|$.  Therefore, we may use the outer estimate~\eqref{eq:exterior.estimate.Y}, and hence
$$
{\rm II}_{2}(x,t)\le C\int_0^{(k(t)|x|)^{1/\theta}}\int_{\stackrel{|y|>s^\theta}{|x-y|>\ell(t) |x|}}|f(x-y,t-s)|s^{2\a-1}E_{-2\beta}(y)\,{\rm d}y{\rm d}s.
$$
Therefore, taking $q$ and $r$ as in~\eqref{eq:choice.q},
\[
\begin{aligned}
\|{\rm II}_{2}(\cdot,t)\|_{L^p(\{\nu<|x|/ \varphi(t)<\mu\})}
&\le
C\int_0^{(\mu k(t)  \varphi(t))^{1/\theta}}\|f(\cdot,t-s)\|_{L^q(\{|x|>\nu\ell(t)\varphi(t)\})} s^{2\a-1}\|E_{-2\beta}\|_{L^r(\{|x|>s^{\theta}\})}\,{\rm d}s\\
&\le C n(t)t^{-\gamma}\int_0^{(\mu k(t)  \varphi(t)))^{1/\theta}}s^{-\sigma(r)}\,{\rm d}s= C n(t)t^{-\gamma}(k(t)\varphi(t))^{\frac{1-\sigma(r)}\theta}
\end{aligned}
\]
where $n(t):=\sup_{\tau>0}\|f(\cdot,\tau)(1+\tau)^\gamma\|_{L^q(\{|x|>\nu\ell(t)\varphi(t)\})}$. Thanks to the assumptions~\eqref{eq-hypothesis f} and~\eqref{eq:tail.control.intermediate},
$$
n(t)=O\big((\ell(t)\varphi(t))^{-N(1-\frac1q)}\big)
=\ell(t)^{-N(1-\frac1q)}O\big(\varphi(t)^{\frac{\sigma(r)-\sigma(p)}\theta}\big),
$$
whence, if $\ell(t)$, in addition to $\ell(t)\succ 1/\varphi(t)$, satisfies also $\ell(t)\succ k(t)^{(1-\sigma(r))/(N\theta(1-\frac1q))}$,
$$
\begin{aligned}
\|{\rm II}_{2}(\cdot,t)\|_{L^p(\{\nu<|x|/ \varphi(t)<\mu\})}&= \ell(t)^{-N(1-\frac1q)}k(t)^{\frac{1-\sigma(r)}\theta}O\big(t^{-\gamma}\varphi(t)^{\frac{1-\sigma(p)}\theta}\big)\\
&
=o\big(t^{-\gamma}\varphi(t)^{\frac{1-\sigma(p)}\theta}\big)=o(\phi(t)).
\end{aligned}
$$

To estimate ${\rm II}_3$, we use the global bound~\eqref{eq:global.estimate.Y}. Then, if $|x|>\nu \varphi(t)$,
\[
\begin{aligned}
{\rm II}_3(x,t)&\le C \int_{(k(t)|x|)^{1/\theta}}^{ t/2}s^{-(1+\alpha)}\int_{\stackrel{|y|>k(t)|x|}{|x-y|>\ell(t) |x|}}|f(x-y,t-s)|E_{4\beta}(y)\,{\rm d}y{\rm d}s
\\
&\le C k(t)^{4\beta-N}t^{-\gamma}E_{4\beta}(x) \int_{(k(t)|x|)^{1/\theta}}^{ t/2}s^{-(1+\alpha)}(1+t-s)^\gamma\| f(\cdot,t-s)\|_{L^1(\{|x|>\nu\ell(t)\varphi(t)\})}\,{\rm d}y{\rm d}s\\
&\le  C v(t) k(t)^{4\beta-N}E_{4\beta}(x)t^{-\gamma}\int_{(k(t)|x|)^{1/\theta}}^{t/2}s^{-(1+\a)}\,{\rm d}s=Cv(t) k(t)^{2\beta-N}t^{-\gamma}E_{2\beta}(x)
\end{aligned}
\]
where $v(t):=\sup_{\tau>t/2}\|(1+\tau)^\gamma f(\cdot,\tau)\|_{L^1(\{|x|>\nu\ell(t)\varphi(t)\})}$ is a bounded function, thanks to the size assumption~\eqref{eq-hypothesis f}. Using~\eqref{eq:behaviour.powers},
$$
\|{\rm II}_3(\cdot,t)\|_{L^p(\{\nu<|x|/ \varphi(t)<\mu\})}=Cv(t) k(t)^{2\beta-N}t^{-\gamma}\varphi(t)^{\frac{1-\sigma(p)}\theta}.
$$

Since $v$ is bounded, in fast scales~\eqref{eq:rates.fast}  we have, see~\eqref{eq:rate.intermediate},
$$
\|{\rm II}_3(\cdot,t)\|_{L^p(\{\nu<|x|/ \varphi(t)<\mu\})}=C \phi(t) k(t)^{2\beta-N}t^{1+\alpha-\gamma}\varphi(t)^{-2\beta}.
$$
On the other hand it is readily checked that in these scales $t^{1+\alpha-\gamma}\varphi(t)^{-2\beta}=o(1)$. Therefore, if we take $k(t)\succ \big(t^{1+\alpha-\gamma}\varphi(t)^{-2\beta}\big)^{1/(N-2\beta)}$, we finally arrive at $\|{\rm II}_3(\cdot,t)\|_{L^p(\{\nu<|x|/ \varphi(t)<\mu\})}=o(\phi(t))$, as desired.

For the scales~\eqref{eq:rates.slow}, \eqref{eq:rates.critical.1}, \eqref{eq:rates.critical}, and \eqref{eq:rates.fast.1} we use assumption~\eqref{eq:hypothesis.asymp.sep.variables.1} to show that, since $\ell(t)\succ 1/\varphi(t)$,
$$
v(t)\le \sup_{\tau>t/2}\|(1+\tau)^\gamma f(\cdot,\tau)-g\|_{L^1(\{|x|>\nu\ell(t)\varphi(t)\})}+\|g\|_{L^1(\{|x|>\nu\ell(t)\varphi(t)\})}=o(1).
$$
Therefore, if we take $k(t)\succ v(t)^{1/(N-2\beta)}$, we get
$$
\|{\rm II}_3(\cdot,t)\|_{L^p(\{\nu<|x|/ \varphi(t)<\mu\})}=o\big(t^{-\gamma}\varphi(t)^{\frac{1-\sigma(p)}\theta}\big)=o(\phi(t)).
$$
	
We now consider the last term, ${\rm III}$. We have ${\rm III}={\rm III}_1+{\rm III}_2$, where
\[\begin{aligned}
{\rm III}_1(x,t)&=\int_{t/2}^t\int_{\stackrel{|y|<h(t)|x|}{|x-y|>\ell(t) |x|}}|f(x-y,t-s)|Y(y,s)\,{\rm d}y{\rm d}s,\\
{\rm III}_2(x,t)&= \int_{t/2}^t\int_{\stackrel{|y|>h(t)|x|}{|x-y|>\ell(t) |x|}}|f(x-y,t-s)|Y(y,s)\,{\rm d}y{\rm d}s,
\end{aligned}
	\]
with $h(t)=o(1)$ such that $h(t)\in(0,1/2)$ for all $t>0$ to be further especified later.

Since $h(t)<1/2$,  $|x-y|\ge|x|/2>\nu \varphi(t)/2$. Hence, taking $q$ and $r$ as in~\eqref{eq:choice.q}, and using the global estimate~\eqref{eq:global.estimate.Y},
\[
\begin{aligned}
	\|{\rm III}_1(\cdot,t)\|_{L^p(\{\nu<|x|/ \varphi(t)<\mu\})}&\le
\int_{t/2}^t\|f(\cdot,t-s)\|_{L^q(\{|x|>\frac\nu2\varphi(t)\})}\|Y(\cdot,s)\|_{L^r(\{|x|<h(t)\mu  \varphi(t)\})}\,{\rm d}s
\\
&\le  Cm(t)t^{-(1+\a)}\|E_{4\beta}\|_{L^r(\{|x|<h(t)\mu  \varphi(t)\})}\int_{t/2}^t(1+t-s)^{-\gamma}\,{\rm d}s,
\end{aligned}
\]
with $m(t)$ as above. Then, since $\sigma(r)<1$ and $h(t)=o(1)$, and  using also~\eqref{eq:est.integral.t}, we conclude that
$$
\begin{aligned}
\|{\rm III}_1(\cdot,t)\|_{L^p(\{\nu<|x|/ \varphi(t)<\mu\})}&\le Ct^{-(1+\a)}h(t)^{\frac{1+\alpha-\sigma(r)}\theta} \varphi(t)^{\frac{1+\alpha-\sigma(p)}\theta}\int_{t/2}^t(1+t-s)^{-\gamma}\,{\rm d}s\\
&=o\big(\varphi(t)^{\frac{1+\alpha-\sigma(p)}\theta}\big)\int_{t/2}^t(1+t-s)^{-\gamma}\,{\rm d}s=o(\phi(t)).
\end{aligned}
$$

To estimate ${\rm III}_2$, we use the global bound~\eqref{eq:global.estimate.Y}. Then, if $|x|> \nu \varphi(t)$,
\[
{\rm III}_2(x,t)\le C \int_{t/2}^t\int_{\stackrel{|y|>h(t)|x|}{|x-y|>\ell(t) |x|}}|f(x-y,t-s)|s^{-(1+\a)}E_{4\beta}(y)\,{\rm d}y{\rm d}s\le C h(t)^{4\beta-N}t^{-(1+\a)}w(t)E_{4\beta}(x),
\]
where $w(t)=\displaystyle\int_0^{t/2}\|f(\cdot,\tau)\|_{L^1(\{|x|\ge \ell(t)\nu\varphi(t)\})}\,{\rm d}\tau$, so that, using~\eqref{eq:behaviour.powers},
$$
\|{\rm III}_2(\cdot,t)\|_{L^p(\{\nu<|x|/ \varphi(t)<\mu\})}\le C h(t)^{4\beta-N}w(t)t^{-(1+\a)}\varphi(t)^{\frac{1+\alpha-\sigma(p)}\theta}
$$
If $\gamma>1$, then $w(t)=o(1)$. Hence, taking $\displaystyle h(t)\succ w(t)^{1/(N-4\beta)}$,
$$
\|{\rm III}_2(\cdot,t)\|_{L^p(\{\nu<|x|/ \varphi(t)<\mu\})}=o\big(t^{-(1+\a)}\varphi(t)^{\frac{1+\alpha-\sigma(p)}\theta}\big),
$$
whence it is easy to check that $\|{\rm III}_2(\cdot,t)\|_{L^p(\{\nu<|x|/ \varphi(t)<\mu\})}=o(\phi(t))$
in the scales \eqref{eq:rates.slow}, \eqref{eq:rates.critical}, and \eqref{eq:rates.fast} when $\gamma>1$.

If $\gamma<1$, something which only happens in the case~\eqref{eq:rates.slow}, hypothesis~\eqref{eq-hypothesis f} yields $w(t)=O(t^{1-\gamma})$.  Remember that $\varphi(t)=o(t^\theta)$. Hence, taking $h(t)\succ (\varphi(t)/t^\theta)^{\alpha/(\theta(N-4\beta))}=o(1)$,
$$
\|{\rm III}_2(\cdot,t)\|_{L^p(\{\nu<|x|/ \varphi(t)<\mu\})}\le C h(t)^{4\beta-N}(\varphi(t)/t^\theta)^{\frac\alpha\theta} t^{-\gamma}\varphi(t)^{\frac{1-\sigma(p)}\theta}
=o\big(t^{-\gamma}\varphi(t)^{\frac{1-\sigma(p)}\theta}\big)=o(\phi(t)).
$$

If $\gamma=1$, the size hypothesis~\eqref{eq-hypothesis f} yields $w(t)=O(\log t)$. If $\varphi(t)=o\big(t^\theta/(\log t)^{\frac1{2\beta}}\big)$, then, taking $h(t)\succ (\varphi(t)/(t^\theta/(\log t)^{\frac1{2\beta}}))^{\frac\alpha{\theta(N-4\beta)}}=o(1)$,
$$
\begin{aligned}
\|{\rm III}_2(\cdot,t)\|_{L^p(\{\nu<|x|/ \varphi(t)<\mu\})}&=C h(t)^{4\beta-N}(\varphi(t)/(t^\theta/(\log t)^{\frac1{2\beta}}))^{\frac\alpha\theta}t^{-1}\varphi(t)^{\frac{1-\sigma(p)}\theta}\\
&=o\big(t^{-1}\varphi(t)^{\frac{1-\sigma(p)}\theta}\big)=o(\phi(t)),
\end{aligned}
$$
which completes the analysis of the case~\eqref{eq:rates.slow}.

For the remaining cases with $\gamma=1$, namely~\eqref{eq:rates.critical.1} and~\eqref{eq:rates.fast.1}, we require the tail control hypothesis~\eqref{eq:uniform.tail.control.subcritical}, that yields $w(t)=o(\log t)$. Taking $h(t)\succ (w(t)/\log t)^{1/(N-4\beta)}$,
for~\eqref{eq:rates.fast.1} we have
$$
\begin{aligned}
\|{\rm III}_2(\cdot,t)\|_{L^p(\{\nu<|x|/ \varphi(t)<\mu\})}&\le C h(t)^{4\beta-N}\frac{w(t)}{\log t}t^{-(1+\alpha)}\log t\,\varphi(t)^{\frac{1+\alpha-\sigma(p)}\theta}\\
&
=o\big(t^{-(1+\alpha)}\log t\,\varphi(t)^{\frac{1+\alpha-\sigma(p)}\theta}\big)=o(\phi(t)),
\end{aligned}
$$
and in the case~\eqref{eq:rates.critical.1}, for which $\varphi(t)\simeq t^\theta/(\log t)^{\frac1{2\beta}}$,
$$
\begin{aligned}
\|{\rm III}_2(\cdot,t)\|_{L^p(\{\nu<|x|/ \varphi(t)<\mu\})}&\le  C h(t)^{4\beta-N}\frac{w(t)}{\log t}t^{-1}\varphi(t)^{\frac{1-\sigma(p)}\theta}(\varphi(t)(\log t)^{\frac1{2\beta}}/t^\theta)^{\frac\alpha\theta}
\\
&\simeq h(t)^{4\beta-N}\frac{w(t)}{\log t}t^{-1}\varphi(t)^{\frac{1-\sigma(p)}\theta}
=o\big(t^{-1}\varphi(t)^{\frac{1-\sigma(p)}\theta}\big)=o(\phi(t)).
\end{aligned}
$$

\end{proof}

\section{Exterior regions}
\setcounter{equation}{0}

This section is devoted to prove the results concerning the large-time behavior in exterior regions, $\{|x|>\nu t^{\theta}\}$ for some $\nu>0$, theorems~\ref{thm:outer.general}--\ref{thm:outer.gamma.equal.1} and Proposition~\ref{prop-coherence}.

\begin{proof}[Proof of Theorem~\ref{thm:outer.general}.]
We make the decomposition
 $$
 \begin{aligned}
 \big|u(x,t)-\int_0^t &M_f(s)Y(x,t-s)\,{\rm d}s\big|\le {\rm I}(x,t)+{\rm II}(x,t),\quad\text{where}\\
 {\rm I}(x,t)&=\int_0^t\int_{|y|<\delta|x|} |f(y,s)| |Y(x-y,t-s)-Y(x,t-s)|\,{\rm d}y{\rm d}s,\\
 {\rm II}(x,t)&=\int_0^t\int_{|y|>\delta|x|} |f(y,s)||Y(x-y,t-s)-Y(x,t-s)|\,{\rm d}y{\rm d}s,
 \end{aligned}
 $$
with $\delta\in(0,1/2)$ to be fixed later.

By the Mean Value Theorem, for each $x,y,t$ and $s$ there is a value $\lambda\in(0,1)$ such that
$$
 |Y(x-y,t-s)-Y(x,t-s)|=|DY(x-\lambda y,t-s)||y|.
$$
But, if $|y|<\delta|x|$ with $\delta\in(0,1/2)$ and $\lambda\in(0,1)$, with $|x|\ge \nu t^{\theta}$ and $s\in (0,t)$, then
$$
|x-\lambda y|>|x|/2,\qquad |x-\lambda y|(t-s)^{-\theta}>(1-\delta)|x|t^{-\theta}> \nu/2.
$$
Therefore, using the estimate~\eqref{eq:bound.DY} for the gradient of $Y$ and the size assumption~\eqref{eq-hypothesis f},
\[
\begin{aligned}
 {\rm I}(x,t)&\le C\int_0^t\int_{|y|<\delta|x|}|f(y,s)|(t-s)^{2\alpha-1}|x-ry|^{-(N+2\beta+1)}|y|\,{\rm d}y{\rm d}s\\
 &\le
 C\delta E_{-2\beta}(x)\int_0^t\int_{|y|<\delta|x|}|f(y,s)|(t-s)^{2\alpha-1}\,{\rm d}y{\rm d}s\\
 &\le C\delta E_{-2\beta}(x)\int_0^t(1+s)^{-\gamma}(t-s)^{2\a-1}\,{\rm d}s
\end{aligned}
\]	
Thus, since
\begin{equation}
\label{eq:Lp.E-2beta}
\|E_{-2\beta}\|_{L^p(\{|x|>\nu t^{\theta}\})}=Ct^{-\sigma(p)-2\a+1},
\end{equation}
we get
 \begin{equation}
 \label{eq-bound I}
\|{\rm I}(\cdot,t)\|_{L^p(\{|x|>\nu t^{\theta}\})}\le C\delta  t^{-\sigma(p)-2\a+1}\Big(t^{2\a-1}\int_0^{t/2}(1+s)^{-\gamma}\,{\rm d}s+t^{-\gamma}\int_{t/2}^t(t-s)^{2\a-1}\,{\rm d}s\Big),
\end{equation}
which combined with~\eqref{eq:est.integral.t} yields $\|{\rm I}(\cdot,t)\|_{L^p(\{|x|>\nu t^{\theta}\})}\le C\delta\phi(t)$. From now on we fix $\delta\in(0,1/2)$ so that $C\delta <\ep$.

We now turn our attention to ${\rm II}$. If $p\in[1,p_{\rm c})$, using~\eqref{eq:p.norm.Y},
$$
\|{\rm II}(\cdot,t)\|_{L^p(\{|x|>\nu t^{\theta}\})}\le C\int_0^t\|f(\cdot,s)\|_{L^1(\{|x|>\delta\nu t^\theta\})}(t-s)^{-\sigma(p)}\,{\rm d}s.
$$
If moreover $\gamma>1$,  then $f\in L^1(\mathbb{R}^N\times(0,\infty))$, and hence
$\displaystyle\int_0^{t/2}\|f(\cdot,s)\|_{L^1(\{|x|>\delta\nu t^\theta\})}\,{\rm d}s=o(1)$, so that, using also assumption~\eqref{eq-hypothesis f} to estimate the integral over $(t/2,t)$,
$$
\begin{aligned}
\|{\rm II}(\cdot,t)\|_{L^p(\{|x|>\nu t^{\theta}\})}&
\le Ct^{-\sigma(p)}\int_0^{t/2}\|f(\cdot,s)\|_{L^1(\{|x|>\delta\nu t^\theta\})}\,{\rm d}s+Ct^{-\gamma}\int_{t/2}^t(t-s)^{-\sigma(p)}\,{\rm d}s\\&=o(t^{-\sigma(p)})+O(t^{1-\gamma-\sigma(p)})=o(t^{-\sigma(p)})=o(\phi(t)).
\end{aligned}
$$

Still in the subcritical case, if $\gamma\le 1$, using the tail control assumption~\eqref{eq:uniform.tail.control.subcritical} we have
$$
\begin{aligned}
\|{\rm II}(\cdot,t)\|_{L^p(\{|x|>\nu t^{\theta}\})}&
\le C\sup_{t>0}\big( (1+t)^\gamma\|f(\cdot,t)\|_{L^1(\{|x|>\delta\nu t^\theta\})}\big) \int_0^t (1+s)^{-\gamma} (t-s)^{-\sigma(p)}\,{\rm d}s\\
&=o\Big(t^{-\sigma(p)}\int_0^{t/2}(1+s)^{-\gamma}\,{\rm d}s+Ct^{-\gamma}\int_{t/2}^t(t-s)^{-\sigma(p)}\,{\rm d}s\Big)
\\
&=\begin{cases}
o(t^{1-\gamma-\sigma(p)}),&\gamma<1,\\
o(t^{-\sigma(p)}\log t)&\gamma=1,
\end{cases}
\end{aligned}
$$
and therefore $\|{\rm II}(\cdot,t)\|_{L^p(\{|x|>\nu t^{\theta}\})}=o(\phi(t))$.

If $p$ is not subcritical, we take $q$ and $r$ as in~\eqref{eq:choice.q}. Then, since $r$ is subcritical, using~\eqref{eq:p.norm.Y},
\begin{equation}
\label{eq:estimate.exterior.II}
\begin{aligned}
\|{\rm II}(\cdot,t)\|_{L^p(\{|x|>\nu t^{\theta}\})}&\le C\int_0^t\|f(\cdot,s)\|_{L^q(\{|x|>\delta\nu t^\theta\})}(t-s)^{-\sigma(r)}\,{\rm d}s.
\\
&\le C v(t) \Big(t^{-\sigma(r)}\int_0^{t/2}(1+s)^{-\gamma}\,{\rm d}s+t^{-\gamma}\int_{t/2}^t(t-s)^{-\sigma(r)}\,{\rm d}s\Big),
\end{aligned}
\end{equation}
where
\begin{equation}
\label{eq:definition.v}
v(t)=\sup_{s>0}\Big((1+s)^\gamma\|f(\cdot,s)\|_{L^q(\{|x|>\delta\nu t^\theta\})}\Big)=o(t^{-N\theta\big(1-\frac1q\big)}),
\end{equation}
thanks to the uniform tail control assumption~\eqref{eq:uniform.tail.control.not.subcritical}. Therefore
$$
\|{\rm II}(\cdot,t)\|_{L^p(\{|x|>\nu t^{\theta}\})}= \begin{cases}
o(t^{1-\gamma-\sigma(p)}),&\gamma<1,\\
o(t^{-\sigma(p)}\log t),&\gamma=1,\\
o(t^{-\sigma(p)},&\gamma>1,
\end{cases}
$$
whence $\|{\rm II}(\cdot,t)\|_{L^p(\{|x|>\nu t^{\theta}\})}=o(\phi(t))$.
\end{proof}

\begin{proof}[Proof of Theorem~\ref{thm:outer.gamma.ge.1}.]
Take $\delta\in(0,1/2)$. Using hypothesis~\eqref{eq-hypothesis f} on the size of $f$, we have
\[
\begin{aligned}
\Big|\int_0^t M_f(s)&Y(x,t-s)\, {\rm d}s-M(t)t^{1-\alpha}Y(x,t)\Big|\le {\rm I}(x,t)+{\rm II}(x,t)+{\rm III}(x,t),\quad\text{where}\\
{\rm I}(x,t)&=\int_0^{\delta t}(1+s)^{-\gamma}(t-s)^{\alpha-1}|(t-s)^{1-\alpha}Y(x,t-s)-t^{1-\alpha}Y(x,t)|\,{\rm d}s,\\
{\rm II}(x,t)&=\int_{\delta t}^t(1+s)^{-\gamma} Y(x,t-s)\,{\rm d}s,\\
{\rm III}(x,t)&= t^{1-\alpha}Y(x,t)\int_{\delta t}^t(1+s)^{-\gamma}(t-s)^{\alpha-1}\,{\rm d}s,
\end{aligned}
\]
for some $\delta\in(0,1/2)$ to be fixed later.

By the Mean Value Theorem, for each $x$, $t$ and $s\in(0,t)$ there exists $\lambda\in(0,1)$ such that
$$
|(t-s)^{1-\alpha}Y(x,t-s)-t^{1-\alpha}Y(x,t)|=s |\partial_t H(x,t-\lambda s)|, \quad\text{where } H(x,t)=t^{1-\alpha}Y(x,t).
$$
From estimates~\eqref{eq:exterior.estimate.Y} and~\eqref{eq:bound.Yt}, if $|x|t^{-\theta}\ge \nu $, $t>0$, for some $\nu>0$, then
$$
|\partial_t H(x,t)| \le C_\nu t^{\alpha-1}E_{-2\beta}(x).
$$
But, if $|x|>\nu t^\theta$, with $\nu>0$, $s\in(0,\delta t)$, with $\delta\in(0,1/2)$, and $\lambda\in(0,1)$, then
$$
t-\lambda s>t/2,\quad |x|(t-\lambda s)^{-\theta}\ge|x|t^{-\theta}\ge \nu.
$$
Therefore, we have
\[
|(t-s)^{1-\alpha}Y(x,t-s)-t^{1-\alpha}Y(x,t)|\le  C s (t-\lambda s)^{\alpha-1}E_{-2\beta}(x)\le C \delta t^\alpha E_{-2\beta}(x),\]
so that
\[
{\rm I}(x,t)\le  C\delta t^{\a}E_{-2\beta}(x)\int_0^{\delta t}(t-s)^{\alpha-1}(1+s)^{-\gamma}\,{\rm d}s
 \le C\delta  t^{2\alpha-1}E_{-2\beta}(x)\int_0^{\delta t}(1+s)^{-\gamma}\,{\rm d}s.
\]
Using~\eqref{eq:Lp.E-2beta}, we finally get $\|{\rm I}(\cdot,t)\|_{L^p(\{|x|>\nu t^\theta\})}\le \ep\phi(t)$ if we choose $\delta\in(0,1/2)$ small enough.

Once the value of $\delta$ is fixed, we have, using the exterior bound~\eqref{eq:exterior.estimate.Y} for the kernel,
\[
\begin{aligned}
{\rm II}(x,t)&\le CE_{-2\beta}(x)\int_{\delta t}^t(1+s)^{-\gamma}(t-s)^{2\a-1}\,{\rm d}s\le C t^{2\a-\gamma}E_{-2\beta}(x),\\
{\rm III}(x,t)&\le Ct^{2\a-1}E_{-2\beta}(x)\int_{\delta t}^t(1+s)^{-\gamma}\,{\rm d}s\le Ct^{2\a-\gamma}E_{-2\beta}(x),
\end{aligned}\]
so that $\|{\rm II}(\cdot,t)\|_{L^p(\{|x|>\nu t^\theta\})},\|{\rm III}(\cdot,t)\|_{L^p(\{|x|>\nu t^\theta\})}\le C t^{1-\gamma-\sigma(p)}=o(\phi(t))$ if $\gamma\ge1$.
\end{proof}

\begin{proof}[Proof of Theorem~\ref{thm:outer.gamma.gt.1}.]
Let $\delta\in(0,1/2)$  to be chosen later. We have
\[
 \begin{aligned}|M(t)t^{1-\alpha}-M_\infty|&\le \Big(t^{1-\a}\int_0^{\delta t}\int_{\mathbb{R}^N} |f(y,s)|\big((t-s)^{\a-1}-t^{\a-1}\big)\,{\rm d}y{\rm d}s\\
 	 &\qquad +t^{1-\a}
 	 \int_{\delta t}^t\int_{\mathbb{R}^N}|f(y,s)|(t-s)^{\a-1}\,{\rm d}y{\rm d}s+\int_{\delta t}^\infty\int_{\mathbb{R}^N} |f(y,s)|\,{\rm d}y{\rm d}s\Big).
\end{aligned}
\]
Since, by the Mean Value Theorem, $0\le (t-s)^{\a-1}-t^{\a-1}\le C\delta t^{\alpha-1}$ if $s\in (0,\delta t)$, using also the size condition~\eqref{eq-hypothesis f} on $f$ with $\gamma>1$ we conclude that
$$
\begin{aligned}
|M(t)t^{1-\alpha}-M_\infty|&\le C\delta \int_0^{\delta t}(1+s)^{-\gamma}\,{\rm d}s+Ct^{1-\a-\gamma}\int_{\delta t}^t(t-s)^{\a-1}\,{\rm d}s+\int_{\delta t}^\infty\int_{\mathbb{R}^N} |f(y,s)|\,{\rm d}y{\rm d}s\\
&\le C\delta+Ct^{1-\gamma}+\int_{\delta t}^\infty\int_{\mathbb{R}^N} |f(y,s)|\,{\rm d}y{\rm d}s\le \varepsilon,
\end{aligned}
$$
if we fix $\delta$ small enough and then take $t$  large.
\end{proof}

\begin{proof}[Proof of Theorem~\ref{thm:outer.gamma.equal.1}]
We make the estimate $|M(t)-M_0t^{\alpha-1}\log(1+t)|\le {\rm I}(t)+{\rm II}(t)+{\rm III}(t)$,
where
\begin{align*}
{\rm I}(t)&=\int_0^t(t-s)^{\alpha-1}(1+s)^{-1}|(1+s)M_f(s)-M_0|\,{\rm d}s,\\
{\rm II}(t)&=|M_0|\int_0^t\big|(t-s)^{\alpha-1}-t^{\alpha-1}\big|(1+s)^{-1}\,{\rm d}s,\\
{\rm III}(t)&=|M_0|t^{\alpha-1}\log\frac{1+t}t.
\end{align*}

From assumption~\eqref{eq:hypothesis.asymp.sep.variables.1} we know that there is a time $\tau_\varepsilon$ such that
\begin{equation}
\label{eq:approaching.M0}
|\big(1+s)M_f(s)-M_0|\le \|(1+s)f(\cdot,s)-g\|_{L^1(\mathbb{R}^N)}<\ep\quad\text{for all }s\ge\tau_\varepsilon.
\end{equation}
With this in mind, we make the estimate ${\rm I}(t)\le {\rm I}_1(t)+{\rm I}_2(t)$, where
$$
\begin{aligned}
{\rm I}_1(t)&=\int_0^{\tau_\varepsilon}(t-s)^{\alpha-1}(1+s)^{-1}|(1+s)M_f(s)-M_0|\,{\rm d}s,\\
{\rm I}_2(t)&=\varepsilon\int_{\tau_\varepsilon}^{t}(t-s)^{\alpha-1}(1+s)^{-1}\,{\rm d}s,
\end{aligned}
$$
valid for $t>\tau_\varepsilon$. On the one hand,  the size assumption~\eqref{eq-hypothesis f} yields $(1+s)|M_f(s)|\le C$, so that
\[
\begin{aligned}{\rm I}_1(t)&\le C\int_0^{\tau_\varepsilon}(t-s)^{\alpha-1}(1+s)^{-1}\,{\rm d}s
\le C_\varepsilon t^{\alpha-1}\int_0^{\tau_\ep}(1+s)^{-1}\,{\rm d}s\\
&=C_\varepsilon t^{\alpha-1}\log (1+\tau_\varepsilon)
\le \ep t^{\alpha-1}\log (1+t)
\end{aligned}\]
if $t$ is large enough. On the other hand, from~\eqref{eq:approaching.M0}, for all large $t$,
\[
\begin{aligned}
{\rm I}_2(t)&\le C\ep \Big(t^{\alpha-1}\int_{\tau_\ep}^{t/2}(1+s)^{-1}\,{\rm d}s+t^{-1}\int_{t/2}^t(t-s)^{\alpha-1}\,{\rm d}s\Big)
\\
&\le C\ep (t^{\alpha-1}\log (1+t)+ t^{\alpha-1})\le C\varepsilon t^{\alpha-1}\log t.
\end{aligned}
\]

As for ${\rm II}$, we estimate it as ${\rm II}(t)\le {\rm II}_1(t)+{\rm II}_2(t)$, where
\begin{align*}
{\rm II}_1(t)&=|M_0|\int_0^{\delta t}\big|(t-s)^{\alpha-1}-t^{\alpha-1}\big|(1+s)^{-1}\,{\rm d}s,\\
{\rm II}_2(t)&=|M_0|\int_{\delta t}^t\big|(t-s)^{\alpha-1}-t^{\alpha-1}\big|(1+s)^{-1}\,{\rm d}s,\\
\end{align*}
for some $\delta\in(0,1/2)$ to be chosen.
Given $\varepsilon>0$, there exists a small constant $\delta=\delta(\varepsilon)>0$  such that
\[
|(t-s)^{\alpha-1}-t^{\alpha-1}|<\ep t^{\alpha-1}\quad\text{if }s\in(0,\delta t).
\]
We fix such $\delta$. Then, if $t$ is large enough,
$$
{\rm II}_1(t)\le |M_0|\varepsilon t^{\alpha-1}\int_0^{\delta t}(1+s)^{-1}\,{\rm d}s=|M_0|\varepsilon t^{\alpha-1}\log(1+\delta t) \le C\varepsilon t^{\alpha-1}\log t.
$$
On the other hand, for $t$ large enough,
$$
{\rm II}_2(t)\le C t^{-1} \int_{\delta t}^t\big((t-s)^{\alpha-1}+t^{\alpha-1}\big)\,{\rm d}s\le C t^{\alpha-1}\le \varepsilon t^{\alpha-1}\log t.
$$

Finally, since $\log\frac{1+t}t=o(1)=o(\log t)$ as $t\to\infty$, we get immediately that ${\rm III}(t)=o\big(t^{\alpha-1}\log t)$.
\end{proof}

\begin{proof}[Proof of Proposition~\ref{prop-coherence}]
Let $\delta\in(0,1/2)$ to be fixed later. We have
$$
\begin{aligned}
\Big|\int_0^t M_f(s)&Y(x,t-s)\,{\rm d}s-t^{-\gamma}M_0 c_{2\beta}E_{2\beta}(x)\Big|\le {\rm I}(x,t)+{\rm II}(x,t),\quad\text{where }\\
		{\rm I}(x,t)&=\Big|\int_0^{\delta t}\int_{\mathbb{R}^N} f(y,t-s)Y(x,s)\,{\rm d}y{\rm d}s-t^{-\gamma}M_0 c_{2\beta}E_{2\beta}(x)\Big|,\\
        {\rm II}(x,t)&=\int_{\delta t}^t\int_{\mathbb{R}^N} |f(y,t-s)|Y(x,s)\,{\rm d}y{\rm d}s.
\end{aligned}
$$
We estimate ${\rm I}$ as ${\rm I}\le {\rm I}_1+{\rm I}_2$, where
$$
\begin{aligned}
{\rm I}_1(x,t)&=\int_0^{\delta t}(1+t-s)^{-\gamma}Y(x,s)\int_{\mathbb{R}^N} |f(y,t-s)(1+t-s)^{\gamma}-g(y)|\,{\rm d}y{\rm d}s,\\
{\rm I}_2(x,t)&=\Big|M_0\int_0^{\delta t}(1+t-s)^{-\gamma}Y(x,s)\,{\rm d}s-t^{-\gamma}M_0 c_{2\beta}E_{2\beta}(x)\Big|.
\end{aligned}
$$
Let $\varepsilon>0$. Since $t-s\ge t/2$ for $s\in (0,\delta t)$, using hypothesis~\eqref{eq:hypothesis.asymp.sep.variables.1} we get
$$
\int_{\mathbb{R}^N} |f(y,t-s)(1+t-s)^{\gamma}-g(y)|\,{\rm d}y\le \varepsilon |M_0|\quad\text{for }s\in (0,\delta t)
$$
for $t$ large enough, how big not depending on $\delta$, so that
$$
{\rm I}_1(x,t)\le \varepsilon t^{-\gamma}|M_0|E_{2\beta}(x)\int_0^{\delta t}\frac{Y(x,s)}{E_{2\beta}(x)}\,{\rm d}s.
$$
We recall now that
\begin{equation}
\label{eq:recall.A.eq.c2beta}
c_{2\beta}=\int_0^\infty\frac{Y(y,s)}{E_{2\beta}(y)}\,{\rm d}s.
\end{equation}
Therefore, ${\rm I}_1(x,t)\le \varepsilon t^{-\gamma}|M_0|c_{2\beta}E_{2\beta}(x)$.

Using again~\eqref{eq:recall.A.eq.c2beta}, we have ${\rm I}_2\le{\rm I}_{21}+{\rm I}_{22}$, where
$$
\begin{aligned}
{\rm I}_{21}(x,t)&=|M_0|E_{2\beta}(x)\int_0^{\delta t}|(1+t-s)^{-\gamma}-t^{-\gamma}|\,\frac{Y(x,s)}{E_{2\beta}(x)}\,{\rm d}s,\\
{\rm I}_{22}(x,t)&=|M_0|t^{-\gamma}\int_{\delta t}^\infty Y(x,s)\,{\rm d}s.
\end{aligned}
$$
If $s\in(0,\delta t)$, then
\[1-\delta<\frac{1+(1-\delta)t}t<\frac{1+t-s}t<\frac{1+t}t=1+\frac 1 t,
\]
so that $\big|(1+t-s)^{-\gamma}-t^{-\gamma}\big|\le \ep t^{-\gamma}$ if $t$ is large and $\delta $ small. Thus, using once more~\eqref{eq:recall.A.eq.c2beta},
$$
{\rm I}_{21}(x,t)\le \ep t^{-\gamma} |M_0| E_{2\beta}(x) \int_0^{\delta t}\frac{Y(y,s)}{E_{2\beta}(y)}\,{\rm d}s \le  \ep t^{-\gamma} |M_0| c_{2\beta} E_{2\beta}(x).
$$

Once we fix $\delta$ as above, using the global estimate~\eqref{eq:global.estimate.Y} for the kernel,
if $|\xi|=|x|t^{-\theta}$ is small enough,
\[
{\rm I}_{22}(x,t)\le Ct^{-\gamma}E_{4\beta}(x)\int_{\delta  t}^\infty s^{-(1+\alpha)}\,{\rm d} s\le C_\delta(|x|t^{-\theta})^{2\beta} |M_0| c_{2\beta}t^{-\gamma}E_{2\beta} (x)
\le \ep t^{-\gamma} |M_0| c_{2\beta} E_{2\beta}(x).
\]
Similarly, using also the size hypothesis~\eqref{eq-hypothesis f}, if $|\xi|=|x|t^{-\theta}$ is small enough,
\[
\begin{aligned}
{\rm II}(x,t)&\le CE_{4\beta}(x)\int_{\delta t}^t (1+t-s)^{-\gamma}s^{-(1+\a)}\,{\rm d}s
\le C_\delta t^{-(1+\a)}E_{4\beta}(x)\int_{\delta t}^t (1+t-s)^{-\gamma}\,{\rm d}s\\
&\le C_\delta(|x|t^{-\theta})^{2\beta} |M_0| c_{2\beta}t^{-\gamma}E_{2\beta} (x)
\le \ep t^{-\gamma} |M_0| c_{2\beta} E_{2\beta}(x).
\end{aligned}
\]
\end{proof}	

\section*{Appendix}\label{sect-appendix}
\setcounter{equation}{0}
\newcommand{\oy}{\overline Y}
\renewcommand{\d}{\,\mathrm{d}}
\renewcommand{\theequation}{A.\arabic{equation}}
\renewcommand{\thesection}{A}

We study here the behavior at infinity of Riesz potentials, using only integral assumptions, a result of independent interest.
\begin{teo}
\label{thm:behavior.Riesz.potential}
Let $\mu\in (0,N)$. Let $g\in L^1(\R^N)$ and $\displaystyle M=\int_{\mathbb{R}^N}g$. If $p\ge p_\mu:=N/(N-\mu)$ we assume in addition the tail control condition
$$
\begin{gathered}
\|g\|_{L^q(\{|x|>R\})}=O\big(R^{-N(1-\frac1q)}\big)\quad \text{as }R\to\infty\text{ for some }q\in(q_\mu(p),p],
\\
q_\mu(p):=\begin{cases}\displaystyle
\frac{Np}{\mu p + N},&p\in[1,\infty),\\[8pt]
\displaystyle\frac{N}{\mu},&p=\infty.
\end{cases}
\end{gathered}
$$
Let $E_\mu$ and $I_\mu$ as in~\eqref{eq:definition.Riesz.potential}.
Then, if $0<\nu\le \mu<\infty$, for any $p\in[1,\infty]$ we have
$$
R^{N\big(1-\frac 1p\big)-\mu}\|I_{\mu}[g]- ME_\mu\|_{L^p(\{\nu<|x|/R<\mu\})}\to0\quad\mbox{as }R\to\infty.
$$
\end{teo}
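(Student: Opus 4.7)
The starting point is the identity
\[
I_\mu[g](x)-ME_\mu(x)=\int_{\R^N} g(y)\bigl(E_\mu(x-y)-E_\mu(x)\bigr)\,{\rm d}y,
\]
obtained by writing $ME_\mu(x)=E_\mu(x)\int g$ and using the symmetry $I_\mu[g](x)=\int g(y)E_\mu(x-y)\,{\rm d}y$. The plan is to split this integral into three zones in $y$ and estimate each piece separately. Fix $\ep>0$ and a large parameter $L$ (with $L<\nu R/2$ eventually), and let $A_R=\{\nu R<|x|<\mu R\}$. The three zones are (i) $|y|\le L$; (ii) $L<|y|\le|x|/2$; (iii) $|y|>|x|/2$.

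In zone (i), the segment joining $x$ and $x-y$ lies in $\{|z|\ge|x|/2\}$, so $|\nabla E_\mu|\le C|x|^{\mu-N-1}$ along it; the mean value theorem gives $|E_\mu(x-y)-E_\mu(x)|\le C|y||x|^{\mu-N-1}$ and hence a normalized $L^p(A_R)$-contribution bounded by $CL\|g\|_{L^1}/R$, which tends to $0$ for each fixed $L$. In zone (ii), both $E_\mu(x-y)$ and $E_\mu(x)$ are $\le CE_\mu(x)$, so the corresponding piece is $\le C\|g\|_{L^1(\{|y|>L\})}E_\mu(x)$, with normalized $L^p$-norm $<\ep$ once $L$ is large, using only $g\in L^1$.

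Zone (iii) is the main obstacle. I would split it as ${\rm III}_1-{\rm III}_2$, where ${\rm III}_2(x)=E_\mu(x)\int_{|y|>|x|/2}g(y)\,{\rm d}y$ is handled by $\|g\|_{L^1(\{|y|>\nu R/2\})}=o(1)$, and ${\rm III}_1(x)=\int_{|y|>|x|/2}g(y)E_\mu(x-y)\,{\rm d}y$. For ${\rm III}_1$ one splits further according to whether $|x-y|>R$ (where $E_\mu(x-y)\le R^{\mu-N}$ combined with the $L^1$-tail of $g$ yields a normalized $o(1)$) or $|x-y|\le R$ (where $|{\rm III}_1(x)|\le (|g|\mathbf{1}_{\{|y|>\nu R/2\}}\ast E_\mu\mathbf{1}_{B_R})(x)$ and Young's convolution inequality applies). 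In the subcritical range $p\in[1,p_\mu)$, Young with $q'=1$, $r'=p$ suffices, since $\|g\mathbf{1}_{\{|y|>\nu R/2\}}\|_{L^1}=o(1)$ and $\|E_\mu\mathbf{1}_{B_R}\|_{L^p}\le CR^{\mu-N(1-1/p)}$.

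For $p\ge p_\mu$, the hypothesis $\|g\|_{L^q(\{|y|>R\})}=O(R^{-N(1-1/q)})$ used directly in Young with $1+1/p=1/q+1/r$ would only yield a matching $O$-bound. To upgrade to $o$, I would exploit the strict inequality $q>q_\mu(p)$: pick $q'\in(q_\mu(p),q)$ and interpolate
\[
\|g\mathbf{1}_{\{|y|>\nu R/2\}}\|_{L^{q'}}\le \|g\mathbf{1}_{\{|y|>\nu R/2\}}\|_{L^1}^{\alpha}\,\|g\mathbf{1}_{\{|y|>\nu R/2\}}\|_{L^q}^{1-\alpha}=o(R^{-N(1-1/q')})
\]
with $1/q'=\alpha+(1-\alpha)/q$, using $(1-1/q)(1-\alpha)=1-1/q'$. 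Applying Young with $1+1/p=1/q'+1/r'$ (for which $r'<p_\mu$ and $\|E_\mu\mathbf{1}_{B_R}\|_{L^{r'}}\le CR^{\mu-N(1-1/r')}$), together with the algebraic identity $(1-1/q')+(1-1/r')=1-1/p$, gives the desired $o(R^{\mu-N(1-1/p)})$ contribution. One finishes by first choosing $L$ to make zone (ii) small and then letting $R\to\infty$.
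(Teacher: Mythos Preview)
Your proof is correct and follows a route that differs from the paper's in two places. The paper splits according to $|y|\lessgtr\gamma|x|$ with a small scaling parameter $\gamma$ chosen so that $\big||x|^{N-\mu}/|x-y|^{N-\mu}-1\big|<\ep/\|g\|_{L^1}$ on the inner piece, whereas you use a fixed ball $|y|\le L$ plus the mean value theorem, and then an annulus $L<|y|\le|x|/2$ controlled by the $L^1$-tail of $g$; both devices yield the same smallness. The more interesting divergence is in the singular convolution piece for $p\ge p_\mu$: the paper cuts at $|x-y|<\delta|x|$ with a \emph{free} small parameter $\delta$, so that Young with exponents $q,r$ (where $1+1/p=1/q+1/r$) gives a factor $\delta^{\mu-N(1-1/r)}$ with positive exponent (since $q>q_\mu(p)$ forces $r<p_\mu$), and smallness comes from choosing $\delta$ small. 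You instead cut at $|x-y|\le R$ (no free parameter) and recover the missing $o(1)$ by interpolating the tail of $g$ between $L^1$ and $L^q$ down to some $q'\in(q_\mu(p),q)$, turning the $O$-hypothesis into an $o$-bound. The paper's approach is slightly more direct and uses the tail hypothesis only at the stated exponent $q$; your interpolation trick is a clean alternative that makes explicit why the strict inequality $q>q_\mu(p)$ is needed.
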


\begin{proof} We may assume without loss of generality that $g\neq0$. We have $|I_{\mu}[g]-ME_\mu|\le\textrm{I}+\textrm{II}+\textrm{III}+\textrm{IV}$, where
	$$
	\begin{aligned}
	\textrm{I}(x)&=E_{\mu}(x)\int_{|y|<\gamma|x|}\Big|\frac{|x|^{N-\mu}}{|x-y|^{N-\mu}}-1\Big| |g(y)|\,{\rm d}y,\\
	\textrm{II}(x)&=E_{\mu}(x)\int_{|y|>\gamma|x|}|g(y)|\,{\rm d}y,\\
	\textrm{III}(x)&=\int_{
		\scriptsize\begin{array}{c}|y|>\gamma |x|\\ |x-y|<\delta|x|\end{array}}\frac{|g(y)|}{|x-y|^{N-\mu}}\,{\rm d}y,\\
	\textrm{IV}(x)&=\int_{
		\scriptsize\begin{array}{c}|y|>\gamma |x|\\|x-y|>\delta|x|\end{array}}\frac{|g(y)|}{|x-y|^{N-\mu}}\,{\rm d}y,
	\end{aligned}
	$$
	with $\gamma,\delta>0$  to be chosen later.
	
	On the one hand, if $|y|<\gamma|x|$, with $\gamma\in(0,1)$,
	\[
	\frac1{(1+\gamma)^{N-\mu}}\le\frac{|x|^{N-\mu}}{(|x|+|y|)^{N-\mu}}\le \frac{|x|^{N-\mu}}{|x-y|^{N-\mu}}\le\frac{|x|^{N-\mu}}{(|x|-|y|)^{N-\mu}}\le \frac1{(1-\gamma)^{N-\mu}}.
	\]
	Hence, $\Big|\frac{|x|^{N-\mu}}{|x-y|^{N-\mu}}-1\Big|<\varepsilon/\|g\|_{L^1(\mathbb{R}^N)}$ if $\gamma$ is small enough, and therefore $\textrm{I}(x)\le \varepsilon E_{\mu}(x)$, whence
$$
\|{\rm I}\|_{L^p(\{\nu<|x|/R<\mu\})}\le \varepsilon \|E_\mu\|_{L^p(\{\nu<|x|/R<\mu\})}\le \varepsilon R^{-N\big(1-\frac 1p\big)+\mu}
$$
for all values of $R$.
From now on $\gamma$ is assumed to be fixed.
	
Since $g\in L^1(\mathbb{R}^N)$, $\|g\|_{L^1(\{|x|\ge \nu R\})}\le \varepsilon$ if $R$ is large enough. Hence,
$$
\|{\rm II}\|_{L^p(\{\nu<|x|/R<\mu\})}\le \varepsilon \|E_\mu\|_{L^p(\{\nu<|x|/R<\mu\})}\le \varepsilon R^{-N\big(1-\frac 1p\big)+\mu}
$$
if $R$ is large enough.

To estimate ${\rm III}$, we choose
$$
q=1\text{ if }p\in[1,p_\mu),\quad q\in (q_\mu(p),p] \text{ as in the hypothesis if }p\ge p_\mu,\qquad
1+\frac1p=\frac1q+\frac1r.
$$
Notice that $r\in [1,p_\mu)$ in all cases. Then, using the integrability of $g$ if $p\in[1,p_\mu)$, or the tail control condition otherwise,
\[
\|{\rm III}\|_{L^p(\{\nu<|x|/R<\mu\})}\le \|g\|_{L^q(\{|x|>\gamma\nu R\})}\|E_\mu\|_{L^r(\{|x|<\delta\mu R\})}\le C_{\nu,\mu}\gamma^{-N(1-\frac1q)}\delta^{-N(1-\frac1r)+\mu} R^{-N\big(1-\frac 1p\big)+\mu}.
\]
Since $r\in [1,p_\mu)$, then $-N(1-\frac1r)+\mu>0$. Therefore, taking $\delta>0$ small enough,
$$
\|{\rm II}\|_{L^p(\{\nu<|x|/R<\mu\})}\le\varepsilon  R^{-N\big(1-\frac 1p\big)+\mu}.
$$
	
Finally, once $\gamma$ and $\delta$ are fixed, since
\[
\textrm{IV}(x)\le \delta^{\mu-N}E_\mu(x)\int_{|y|>\gamma|x|}|g(y)|\,{\rm d}y,
\]
we have, using the integrability of $g$,
\[
\|{\rm IV}\|_{L^p(\{\nu<|x|/R<\mu\})}\le C_\delta \|g\|_{L^1(\{|x|>\gamma\nu R\})}\|E_\mu\|_{L^p(\{\nu<|x|/R<\delta\})}\le
\varepsilon  R^{-N\big(1-\frac 1p\big)+\mu},
\]
if $R$ is large enough.
\end{proof}

\noindent\emph{Remark. } The tail control assumption in Theorem~\ref{thm:behavior.Riesz.potential} is satisfied, for instance, if $|g(x)|\le |x|^{-N}$.

\section*{Acknowledgments}


\noindent C. Cort\'azar supported by  FONDECYT grant 1190102 (Chile). 

\noindent F. Quir\'os supported by grants CEX2019-000904-S, PID2020-116949GB-I00, and RED2018-102650-T, all of them funded by MCIN/AEI/10.13039/501100011033, and by the Madrid Government (Comunidad de Madrid – Spain) under the multiannual Agreement with UAM in the line for the Excellence of the University Research Staff in the context of the V PRICIT (Regional Programme of Research and Technological Innovation).

\noindent N. Wolanski supported by European Union's Horizon 2020 research and innovation programme under the Marie Sklodowska-Curie grant agreement No.\,777822,  CONICET PIP 11220150100032CO 2016-2019; UBACYT 20020150100154BA, ANPCyT PICT2016-1022 and MathAmSud 13MATH03 (Argentina).

\end{document}